\newtheorem{theorem}{Theorem}[section]
\newtheorem{definition}[theorem]{Definition}
\newtheorem{proposition}[theorem]{Proposition}
\newtheorem{lemma}[theorem]{Lemma}
\newcommand{\lmref}[1]{Lemma \ref{#1}}
\newcommand{\thmref}[1]{Theorem \ref{#1}}
\newcommand{\propref}[1]{Proposition \ref{#1}}
\newcommand{\defref}[1]{Definition \ref{#1}}
\newcommand{\secref}[1]{Section \ref{#1}}
\numberwithin{equation}{section}
\begin{document}

\title{Magnetohydrodynamic regime of the Born-Infeld electromagnetism}
\author{
%\\
Xianglong Duan}
\date\today

\subjclass{}

\keywords{magnetohydrodynamics, fluid mechanics, dissipative solution}

\address{
CNRS UMR 7640 \\ % \hfill (Received 00 00 2010)\\
Ecole Polytechnique   \\ %\hfill (Revised  00 00 2010)\\
Palaiseau\\
France}

\email{xianglong.duan@polytechnique.edu}

\begin{abstract}
The Born-Infeld (BI) model is a nonlinear correction of Maxwell's equations. By adding the energy and Poynting vector as additional variables, it can be augmented as a $10\times10$ system of hyperbolic conservation laws, called the augmented BI (ABI) equations. The author found that, through a quadratic change of the time variable, the ABI system gives a simple energy dissipation model that combines Darcy's law and magnetohydrodynamics (MHD). Using the concept of ``relative entropy" (or ``modulated energy"), borrowed from the theory
of hyperbolic systems of conservation laws, we introduce a notion of generalized solutions, that we call dissipative solutions. For given initial conditions, the set of generalized solutions is not empty, convex, and compact. Smooth solutions to the dissipative system are always unique in this setting.
%(?????????????????????????)
\end{abstract}
\maketitle

%%%%%%%%%%%%%%%%%%%%%%%%%%%%%%%%%%%%%%%%%%
%%%%%%%%%%%%%%%%%%%%%%%%%%%%%%%%%%%%%%%%%%
%%%%%%%%%%%%%%%%%%%%%%%%%%%%%%%%%%%%%%%%%%

\section{Introduction}

There are many examples of dissipative systems that can be derived from conservative ones. The derivation can be
done in many different ways, for example by adding a very strong friction term or by homogenization techniques
% \cite{Marcati,Allaire,StRaymond...}
or by properly rescaling the time variable by a small
parameter (through the so-called ``parabolic scaling"). In the recent work of the author and Y. Brenier \cite{BD}, we suggested a very straightforward idea: just perform the
quadratic change of time
$t\rightarrow \theta=t^2/2$.
Several examples were studied in that paper. One example was the porous medium equation, which can be retrieved from the Euler equation of isentropic gases. Another relevant example, at the interface of Geometry and High Energy Physics, is the dissipative geometric model of curve-shortening flow in
$\mathbb{R}^d$ (which is the simplest example of mean-curvature flow with co-dimension higher than $1$) that we obtained from the
conservative evolution of classical strings according to the Nambu-Goto action. This paper is a follow-up of \cite{BD}, where the Born-Infeld model of Electromagnetism is taken as an example, and as a result, we get a dissipative model of Magnetohydrodynamics (MHD) where we have non-linear diffusions in the magnetic induction equation and the Darcy's law for the velocity field.

The Born-Infeld (BI) equations were originally introduced by Max Born and Leopold Infeld in 1934 \cite{BI} as a nonlinear correction to the linear Maxwell equations allowing finite electrostatic fields for point charges. In high energy Physics, D-branes can be modelled according to a generalization of the BI model \cite{Po,Gi}. In differential geometry, the BI equations are closely related to the study of extremal surfaces in the Minkowski space. In the 4-dimensional Minkowski space of special relativity, the BI equations form a $6\times 6$ system of conservation laws
in the sense of \cite{Dafer}, with $2$ differential constraints,
$$
\partial_t B+\nabla\times
\left(\frac{B\times(D\times B)+D}{h}\right)=0,
\;\;\;\partial_t D+\nabla\times
\left(\frac{D\times(D\times B)-B}{h}\right)=0,
$$
$$
h=\sqrt{1+D^2+B^2+(D\times B)^2},
\;\;\;\nabla\cdot B=\nabla\cdot D=0,
$$
where we use the conventional notations for the inner product $\cdot$ and the cross-product $\times$ in $\mathbb{R}^3$,
the gradient operator $\nabla$,
the curl operator $\nabla\times$ and the electromagnetic field $(B,D)$.
By Noether's theorem, this system admits 4 extra conservation laws for the energy density $h$ and Poynting vector $P$, namely,
$$
\partial_t h+\nabla\cdot P=0,\;\;\;
\partial_t P+
\nabla\cdot\left(\frac{P\otimes P-B\otimes B-D\otimes D}{h}\right)=\nabla \left(\frac{1}{h}\right),
$$
where
$$
P=D\times B,\;\;
h=\sqrt{1+D^2+B^2+|D\times B|^2}.
$$
As advocated in \cite{Br}, by viewing $h,P$ as independent variables, the BI system can be ``augmented'' as a $10\times 10$ system of hyperbolic conservation laws with an extra conservation law involving a ``strictly convex'' entropy, namely
$$h^{-1}(1+B^2+D^2+P^2).$$
This augmented BI system belongs to the
nice class of systems of conservation laws ``with convex entropy'', which, under secondary suitable additional conditions, enjoy
important properties such as well-posedness of the initial value problem, at least for short times, and ``weak-strong'' uniqueness principles \cite{Dafer}.

For the $10\times10$ augmented BI system, we obtain, after the quadratic change of the time variable $t\rightarrow\theta=t^2/2$,
the following asymptotic system as $\theta<<1$:
$$
\partial_\theta B+\nabla\times (h^{-1}B\times P)
+\nabla\times (h^{-1}\nabla\times(h^{-1}B))=0,
$$
$$
\partial_\theta h+\nabla\cdot P=0,\;\;\;
P=
\nabla\cdot(h^{-1}B\otimes B)+\nabla (h^{-1}).
$$
This system can be interpreted
as an unusual, fully dissipative version of standard Magnetohydrodynamics, including a generalized
version of the Darcy law, with a
fluid of density $h$, momentum $P$ and pressure $p=-h^{-1}$ (of Chaplygin type), interacting with a magnetic field $B$. It belongs to the class of non-linear degenerate parabolic PDEs.

In the rest of the paper, we proceed to the analysis of this asymptotic model (that we call ``Darcy MHD'') obtained after rescaling the $10\times 10$
augmented BI model: (i) in \secref{sec:3},
we define a concept of ``dissipative solutions''
in a sense inspired by the work of P.-L. Lions for the Euler equation of incompressible fluids \cite{Li},
the work of L. Ambrosio, N. Gigli, G. Savar\'e \cite{AGS} for the heat equation (working in a very general
class of metric measured spaces) and quite similar to the one recently introduced by Y. Brenier in \cite{Br-CMP}; (ii) in \secref{sec:4}, we demonstrate some properties of the dissipative solutions. we establish a ``weak-strong'' uniqueness principle, in the sense that,
for a fixed smooth initial condition, a smooth classical solutions is necessarily unique in the class
of dissipative solutions admitting the same initial condition;
we prove the "weak compactness" of such solutions (i.e. any sequence of dissipative solutions has accumulations
points, in a suitable weak sense, and each of them is still a dissipative solution); (iii) in \secref{sec:4}, we estimate the error between dissipative solutions of the asymptotic system and smooth solutions of the $10\time 10$ augmented Born-Infeld system; (iv) we finally prove the global existence solution of dissipative solution for any initial condition, without any
smoothness assumption. This last point,  which is a non-surprising consequence of the
weak compactness, nevertheless requires a lengthy and
technical proof which is presented in \secref{sec:6} and  \secref{sec:7}.

%%%%%%%%%%%%%%%%%%%%%%%%%%%%%%%%%%%%%

\subsection*{Acknowlegment}
The author is very grateful to his PhD advisor, Yann Brenier for his help during the
completion to this paper.
The author is also very grateful to Alexis F. Vasseur for pointing out references \cite{f1,f2}.
%(???????????????????????????)

%%%%%%%%%%%%%%%%%%%%%%%%%%%%%%%%%%%%%
%%%%%%%%%%%%%%%%%%%%%%%%%%%%%%%%%%%%%
%%%%%%%%%%%%%%%%%%%%%%%%%%%%%%%%%%%%%

\section{Direct Derivation of the diffusion equations}\label{sec:2}

%%%%%%%%%%%%%%%%%%%%%%%%%%%%%%%%%%%%%

\subsection{Presentation of the Born-Infeld model}

For a $n+1$ dimensional spacetime, the Born-Infeld equations can be obtained by varying the Lagrangian of the following density
$$
\mathcal{L}_{BI} = \lambda^2\left(1-\sqrt{-\det\left(\eta + \frac{F}{\lambda}\right)}\right)
$$
where $\eta={\rm diag}(-1,1,\ldots,1)$ is the Minkowski metric tensor, $F_{\mu\nu}=\partial_{\mu}A_{\nu}-\partial_{\nu}A_{\mu}$ is the electromagnetic field tensor with $A$ a vector potential. The parameter $\lambda\in(0,\infty)$ is called the absolute field constant which can be comprehended as the upper limit of the field strength \cite{BI}.
%\cite{BIc}.
In the $4$ dimensional spacetime, by using the classical electromagnetic field symbols $B,D$, the BI equations can be written as
\begin{equation}
\partial_t B + \nabla\times\left(\frac{\lambda^2 D + B\times(D\times B)}{\sqrt{\lambda^4 + \lambda^2B^2+ \lambda^2D^2+|D\times B|^2}}\right)=0,\quad\nabla\cdot B=0,
\end{equation}
\begin{equation}
\partial_t D +\nabla\times\left(\frac{-\lambda^2 B + D\times(D\times B)}{\sqrt{\lambda^4 + \lambda^2B^2+ \lambda^2D^2+|D\times B|^2}}\right),\quad\nabla\cdot D=0.
\end{equation}

Now, let us introduce some background of the BI model. The BI model was originally introduced by Max Born and Leopold Infeld in 1934 \cite{BI} as a nonlinear correction to the linear Maxwell model. Born had already postulated \cite{Born} a universal bound $\lambda$ for any electrostatic field, even generated by a point charge (which is obviously not the case of the Maxwell theory for which the corresponding field is unbounded and not even locally square integrable in space), just as the speed of light is a universal bound for any velocity in special relativity. As $\lambda\rightarrow\infty$ the linear Maxwell theory is easily recovered as an approximation of the BI model. Max Born proposed a precise value for $\lambda$ (based on the mass of the electron) and showed no substantial difference with the Maxwell model until subatomic scales are reached. In this way, the BI model was thought as an alternative to the Maxwell theory to tackle the delicate issue of establishing a consistent quantization of Electromagnetism with $\lambda$ playing the role of a cut-off parameter. As a matter of fact, the BI model rapidly became obsolete for such a purpose, after the arising of Quantum Electrodynamics (QED), where renormalization techniques were able to cure the problems posed by the unboundedness of the Maxwell field generated by point charges. [Interestingly enough, M. Kiessling has recently revisited QED from
a Born-Infeld perspective  \cite{Kiessling,Kiessling-2}.]
Later on, there has been a renewed interest for the BI model in high energy Physics, starting in the 1960s for the modelling of hadrons, with a strong revival in the 1990s, in String Theory. In particular the new concept of D-brane was modelled according to a generalization of the BI model \cite{Po,Gi}.

Another important feature of the BI model is its deep link with differential geometry, already studied in a memoir of the Institut Henri Poincare by Max Born in 1938 \cite{Bo-IHP}. Indeed, the BI equations are closely related to the concept of extremal
surfaces in the Minkowski space. As a matter of fact \cite{Br, Br-W}, as $\lambda\rightarrow 0$, the BI model
provides a faithful description of a continuum of classical strings, which are nothing but extremal surfaces moving in the Minkowski space.

From a PDE viewpoint, the BI equations belong to the family of nonlinear systems of hyperbolic conservation laws \cite{Dafer}, for which the existence and uniqueness of local
in time smooth solutions can be proven by standard devices. A rather impressive result was recently established by J. Speck \cite{Speck} who was able to show the global existence of smooth localized solutions for the original BI system, provided the initial conditions are of small enough amplitude. His proof relies on the null-form method developed by Klainerman and collaborators (in particular for the Einstein
equation) combined with dispersive (Strichartz) estimates. This followed an earlier work of Lindblad on the model of extremal surfaces in the Minkowski space which
can be seen as a ``scalar'' version of the BI system \cite{Lind}.

%%%%%%%%%%%%%%%%%%%%%%%%%%%%%%%%%%%%%%%%%%%%%

\subsection{The $10\times 10$ augmented BI system}
In 2004, Y. Brenier showed that the structure of the BI system can be widely ``simplified'' by using the extra conservation laws of energy and momentum provided by the Noether invariance theorem, where the momentum (called Poynting vector) is $P=D\times B$ while the energy density is $h=\sqrt{1+B^2+D^2+P^2}$ \cite{Br}. They read (after $\lambda$ has been
normalized to be 1, which is possible by a suitable change of physical units)
\begin{equation}
\partial_t h + \nabla\cdot P=0,\;\;\;\partial_t P + \left(\frac{P\otimes P - B\otimes B - D\otimes D}{h}\right)=\nabla\left(\frac{1}{h}\right),
\end{equation}
At this point, there are two main possibilities. The first one amounts to add the conservation of momentum (i.e. 3 additional conservation laws) to the $6\times 6$ original
BI equations, written in a suitable way, where $P$ is considered as independent from
$B$ and $D$ (namely not given by the algebraic relation $P=D\times B$) while $h$ is still $h=\sqrt{1+B^2+D^2+P^2}$. This strategy leads to the $9\times 9$ system and the conservation of energy then reads
$$\partial_t \sqrt{1+B^2+D^2+P^2} + \nabla\cdot P +\nabla\cdot\left(\frac{(D\cdot P)D+(B\cdot P)B-P+D\times B}{1+B^2+D^2+P^2}\right)=0$$
where the energy is now a strictly convex function of $B$, $D$ and $P$. It can be shown \cite{Br} that the algebraic constraint $P=D\times B$ is preserved during the evolution of any smooth solution of this system, which implies that, at least for smooth solutions,
the $9\times 9$ augmented system is perfectly suitable for the analysis of the BI equations. This idea has been successfully extended to a very large class of nonlinear systems in
Electromagnetism by D. Serre \cite{Serre}. An even more radical strategy was followed and emphasized in \cite{Br}, where $h$ itself is considered as a new unknown variable,
independent from $B$, $D$ and $P$, while adding the conservation of both energy and
momentum (i.e. 4 conservation laws) to the original $6\times 6$ BI system, written in
a suitable way. This leads to the following $10\times 10$ system of conservation law for
$B,D,P,h$:
\begin{equation}\label{eq:ABI-1}
\partial_t h + \nabla\cdot P=0,
\;\;\partial_t B+ \nabla\times\left(\frac{B\times P+D}{h}\right)=0,\;\;\nabla\cdot B=\nabla\cdot D=0,
\end{equation}
\begin{equation}\label{eq:ABI-2}
\partial_t D+\nabla\times \left(\frac{D\times P-B}{h}\right)=0,
\;\partial_t P+
\nabla\cdot\left(\frac{P\otimes P - B\otimes B - D\otimes D-I_3}{h}\right)=0,
\end{equation}
Once again, the algebraic constraints, namely
$$P=D\times B,\;\;\;h=\sqrt{1+B^2+D^2+P^2}$$
are preserved during the evolution of smooth solutions. The $10\times 10$ extension has a very nice structure, enjoying invariance under Galilean transforms
$$(t,x,B,D,P,h)\longrightarrow (t,x+Vt,B,D,P-Vh,h)$$
(where $V\in\mathbb{R}^3$ is any fixed constant velocity). This is quite surprising, since the BI model is definitely Lorentzian and not Galilean, but not contradictory since such
Galilean transforms are not compatible with the algebraic constraints:
$$P=D\times B,\;\;\;h=\sqrt{1+B^2+D^2+P^2}$$
In \cite{Br-W} it is further observed that, written in non conservation forms, for variables
$$(\tau,b,d,v)=(1/h,B/h,D/h,P/h)\in\mathbb{R}^{10},$$
the $10\times 10$ system reduces to
\begin{equation}\label{eq:abi-non1}
\partial_t b +(v\cdot\nabla)b=(b\cdot\nabla)v-\tau\nabla\times d,\;\;\partial_t d +(v\cdot\nabla)d=(d\cdot\nabla)v+\tau\nabla\times b,
\end{equation}
\begin{equation}\label{eq:abi-non2}
\partial_t \tau +(v\cdot\nabla)\tau=\tau\nabla\cdot v,\;\;\partial_t v +(v\cdot\nabla)v=(b\cdot\nabla)b+(d\cdot\nabla)d+\tau\nabla\tau,
\end{equation}
which is just a symmetric quadratic system of first order PDEs, automatically well-posed (for short times) in Sobolev spaces, such as $W^{s,2}$ for any $s>5/2$, without any restriction on the values of $(b,d,v,\tau)$ in $\mathbb{R}^{10}$ (including negative values of $\tau$!). Once again, the algebraic constraints, which can be now nicely written as
$$b^2+d^2+v^2+\tau^2=1,\;\;\;\tau v=d\times b$$
are preserved during the evolution. Notice that two interesting reductions of this system can be performed. First, it is consistent to set simultaneously $\tau=0$ and
$d=0$ in the equations, which leads to
$$\partial_t b +(v\cdot\nabla)b=(b\cdot\nabla)v,\;\;\partial_t v +(v\cdot\nabla)v=(b\cdot\nabla)b,$$
while the algebraic constraints become
$$b^2+v^2=1,\;\;\;b\cdot v=0.$$
This system can be used to describe the evolution of a continuum of classical strings (i.e. extremal $2-$surfaces in the $4-$dimensional Minkowski space) \cite{Br-W}. A second
reduction can be obtained by setting $\tau=0$, $b=d=0$ which leads to the inviscid Burgers equation
$$\partial_t v +(v\cdot\nabla)v=0$$
This equation, as well known, always leads to finite time singularity for all smooth localized initial conditions $v$, except for the trivial one: $v=0$ (which, by the way,
shows that Speck's result cannot be extended to the $10\times 10$ BI system, without restrictions on the initial conditions).

%%%%%%%%%%%%%%%%%%%%%%%%%%%%%%%%%%%%%%%%%%%%%

\subsection{Quadratic time rescaling of the augmented BI system}

Let us perform the following rescaling of the $10\times 10$ augmented BI system \eqref{eq:ABI-1}-\eqref{eq:ABI-2}:
$$t\rightarrow\theta=t^2/2,\;\;\;h,B,P,D\rightarrow h,B,P\frac{d\theta}{dt},D\frac{d\theta}{dt}$$
Observe that the symmetry between $B$ and $D$ is broken in this rescaling since $D$ is rescaled in the same way as $P$ but not as $B$. We obtain, after very simple calculations, the following rescaled equations,
$$
\partial_\theta h + \nabla\cdot P=0,
\;\;\partial_\theta B + \nabla\times \left(\frac{B\times P+D}{h}\right)=0,
$$
$$
D+2\theta\left[\partial_\theta D+\nabla\times
\left(\frac{D\times P}{h}\right)\right]=\nabla\times
\left(\frac{B}{h}\right),
$$
$$
P+
2\theta
\left[\partial_\theta P+\nabla\cdot\left(\frac{P\otimes P-D\otimes D}{h}\right)\right]=
\nabla\cdot\left(\frac{B\otimes B}{h}\right)+\nabla (h^{-1}).
$$
In the regime $\theta>>1$, we get a self-consistent system for $(D,P,h)$ (without $B$!)
$$\partial_\theta h + \nabla\cdot P=0,\;\;\;\partial_\theta D+\nabla\times
\left(\frac{D\times P}{h}\right)=0,$$
$$\partial_\theta P+\nabla\cdot\left(\frac{P\otimes P-D\otimes D}{h}\right)=0,$$
which, written in non-conservative variables $(d,v)=(D/h,P/h)$, reduces to
$$\partial_t v +(v\cdot\nabla)v=(d\cdot\nabla)d,\;\;\;\partial_t d +(v\cdot\nabla)d=(d\cdot\nabla)v,$$
that we already saw in the previous subsection as a possible reduction of the ($10\times 10$) extended BI system (which describes the motion of a continuum of strings). The regime of higher interest for us is the dissipative one obtained as $\theta<<1$.
Neglecting the higher order terms as $\theta<<1$, we first get
$$D=\nabla\times
\left(h^{-1}B\right)$$
which allows us to eliminate $D$ and get for $(B,P,h)$ the self-consistent system
$$\partial_\theta B +\nabla\times\left(h^{-1}B\times P\right) + \nabla\times\left(h^{-1}\nabla\times
\left(h^{-1}B\right)\right)=0,$$
$$\partial_\theta h + \nabla\cdot P=0,\;\;\;P=
\nabla\cdot\left(h^{-1}B\otimes B\right)+\nabla \left(h^{-1}\right).$$
This can be seen as a dissipative model of Magnetohydrodynamics (MHD) where a fluid of density $h$ and momentum $P$ interacts with a magnetic field $B$, with several
interesting (and intriguing) features:\\
(i) the first equation, which can be interpreted in MHD terms as the ``induction equation'' for $B$, involves a second-order diffusion term typical of MHD: $\nabla\times\left(h^{-1}\nabla\times
\left(h^{-1}B\right)\right)$ (with, however, an unusual dependence on $h$);
(ii) the third equation describes the motion of the fluid of density $h$ and momentum $P$ driven by the magnetic field $B$ and can be interpreted as a (generalized) Darcy
law (and not as the usual momentum equation of MHD), just if the fluid was moving in a porous medium (which seems highly unusual in MHD!);
(iii) there are many coefficients which depend on $h$ in a very peculiar way; in particular the Darcy law involves the so-called Chaplygin pressure $p=-h^{-1}$ (with sound
speed $\sqrt{dp/dh}=h^{-1}$), which is sometimes used for the modeling of granular flows and also in cosmology, but not (to the best of our knowledge) in standard MHD.

To conclude this subsection, let us emphasize the remarkable structure of the ($10\times 10$) extended Born-Infeld system, after quadratic time-rescaling $t\rightarrow\theta=t^2/2$,
which interpolates between the description of a continuum of strings (as $\theta>>1$), in the style of high energy physics (however without any quantum feature) and a much more ``down to earth'' (but highly conjectural) dissipative model of MHD in
a porous medium (as $\theta<<1$)!

%%%%%%%%%%%%%%%%%%%%%%%%%%%%%%%%%%%%%%%%%%
%%%%%%%%%%%%%%%%%%%%%%%%%%%%%%%%%%%%%%%%%%
%%%%%%%%%%%%%%%%%%%%%%%%%%%%%%%%%%%%%%%%%%

\section{dissipative solution of the diffusion equations}\label{sec:3}

From now on, we focus on the analysis of the following system of diffusion equations (we call Darcy MHD, or DMHD),
\begin{equation}\label{eq:dd-h}
\partial_{t}h +\nabla\cdot \left(h v\right)=0,
\end{equation}
\begin{equation}\label{eq:dd-B}
\partial_{t}B + \nabla\times\left(B\times v + d\right)=0,
\end{equation}
\begin{equation}\label{eq:dd-con}
D=h d = \nabla\times \left(\frac{B}{h}\right),\;\;P=h v = \nabla\cdot \left(\frac{B\otimes B}{h}\right) + \nabla\left(h^{-1}\right),
\end{equation}
\begin{equation}\label{eq:dd-div}
\nabla\cdot B=0.
\end{equation}
Written in the non-conservative variables $(\tau,b,d,v)=(1/h,B/h,D/h,P/h)$, the equation reads
\begin{equation}
\partial_t\tau + v\cdot\nabla\tau =\tau \nabla\cdot v,
\;\;\;\partial_t b + (v\cdot\nabla) b =(b\cdot\nabla)v - \tau\nabla\times d,
\end{equation}
\begin{equation}
d=\tau\nabla\times b,\;\;\;v=(b\cdot\nabla)b + \tau\nabla\tau.
\end{equation}

For simplicity, we consider the periodic solutions on  $[0,T]\times\mathbb{T}^3,\;T>0,\;\mathbb{T}=\mathbb{R}/\mathbb{Z}$.

%%%%%%%%%%%%%%%%%%%%%%%%%%%%%%%%%%%%%%%%%%%%%%%%%%

\subsection{Relative entropy and the idea of dissipative solution}

For the moment, ignoring the existence and regularity issues, we assume that there exists a sufficiently smooth solution $(h>0,B,D,P)$ of the Darcy MHD \eqref{eq:dd-h}-\eqref{eq:dd-div}.

First, as introduced in the previous section, the augmented BI equations \eqref{eq:ABI-1}-\eqref{eq:ABI-2} have a strictly convex entropy, namely,
$$\frac{1+B^2+D^2+P^2}{2h}.$$
By performing the quadratic change of time $t\rightarrow\theta=t^2/2$, in the regime $\theta<<1$, the entropy is reduced to
$$\frac{1+B^2}{2h}.$$
It is natural to consider the above energy for the reduced parabolic system i.e., Darcy MHD. As an easy exercise, we can show that the energy we suggested above is decreasing as time goes on. In fact, we have the following equality,
\begin{equation}\label{eq:dde1}
\frac{{\rm d}}{{\rm d}t}\int_{x\in\mathbb{T}^3}\frac{B^2+1}{2h} + \int_{x\in\mathbb{T}^3}\frac{D^2+P^2}{h} = 0
\end{equation}
(This is easy to check, since
\begin{equation*}
\begin{array}{r@{}l}
\displaystyle{\frac{{\rm d}}{{\rm d}t}\int\frac{B^2+1}{2h}} & = \displaystyle{\int\frac{B\cdot\partial_{t}B}{h}-\int\frac{B^2+1}{2h^2}\partial_{t}h}\\
& = \displaystyle{-\int\nabla\times\left(\frac{B}{h}\right)\cdot\left(B\times v + d\right)-\int\nabla\left(\frac{B^2+1}{2h^2}\right)\cdot P}\\
& = \displaystyle{-\int\left[\nabla\times\left(\frac{B}{h}\right)\right]\cdot\frac{D}{h}-\int\left[\nabla\cdot\left(\frac{B\otimes B+ I_{3}}{h}\right)\right]\cdot \frac{P}{h} }\\
\end{array}
\end{equation*}
which, by \eqref{eq:dd-con}, gives the dissipative term.)

Now, for any smooth test functions $(b^{*},h^{*})\in \mathbb{R}^3\times\mathbb{R}^{+}$, the relative entropy is defined by
$$
\frac{1}{2h}\left[(B-hb^{*})^2+(1-h{h^{*}}^{-1})^2\right]
$$
Before going on, let's look at the following lemma which gives us a nice formula for the relative entropy:
\begin{lemma}\label{lm:dd-entropy}
For any functions $P,B,D,v^*,b^*,d^*\in C^1([0,T]\times\mathbb{T}^3,\mathbb{R}^3)$, and positive functions $0<h,h^*\in C^{1}([0,T]\times\mathbb{T}^3,\mathbb{R})$, suppose $(h,B,D,P)$ is a solution of the Darcy MHD \eqref{eq:dd-h}-\eqref{eq:dd-div}, then the following equality always holds
\begin{equation}\label{eq:dd-rel-1}
\frac{{\rm d}}{{\rm d}t}\int_{x\in\mathbb{T}^3}\frac{\big|\widetilde{U}\big|^2}{2h} +\int_{x\in\mathbb{T}^3}\frac{\widetilde{W}^{\rm T}Q(w^*)\widetilde{W}}{2h} +\int_{x\in\mathbb{T}^3} \widetilde{W}\cdot\mathrm{L}(w^*)=0
\end{equation}
where
$$\widetilde{U}=\left(1-h{h^*}^{-1},B-hb^*\right), \;\;\widetilde{W}=\left(\widetilde{U},D-hd^{*},P-hv^{*}\right),\;\;w^*=({h^*}^{-1},b^*,d^*,v^*),$$
$Q(w^*)$ is a symmetric matrix that has the following expression
\begin{equation}\label{eq:ddm-1}
Q(w^*)=\left(
                       \begin{array}{cccc}
                         -2\nabla\cdot v^{*} &(\nabla\times d^{*})^{{\rm T}} & -(\nabla\times b^{*})^{{\rm T}}& 0 \\
                         \nabla\times d^{*} & -\nabla v^{*}-\nabla {v^{*}}^{{\rm T}} & 0 &\nabla b^{*}-\nabla {b^{*}}^{{\rm T}}\\
                        -\nabla\times b^{*} & 0 & 2I_{3}& 0\\
                         0 & \nabla {b^{*}}^{{\rm T}}-\nabla b^{*} &0 & 2I_{3}\\
                       \end{array}
                     \right),
\end{equation}
$\mathrm{L}(w^*)=\big(\mathrm{L}_{h}(w^*),\mathrm{L}_{B}(w^*),\mathrm{L}_{D}(w^*),\mathrm{L}_{P}(w^*)\big)$ has the following expression
\begin{equation}\label{eq:L-h}
\mathrm{L}_{h}(w^*)=\partial_{t}\big(
{h^*}^{-1}\big) -{h^*}^{-1}\nabla\cdot v^* + v^*\cdot\nabla\big(
{h^*}^{-1}\big),
\end{equation}
\begin{equation}
\mathrm{L}_{B}(w^*)=\partial_{t}b^* + (v^*\cdot\nabla)b^*-(b^*\cdot\nabla)v^*+{h^*}^{-1}
\nabla\times d^*,
\end{equation}
\begin{equation}
\mathrm{L}_{D}(w^*)=d^*-{h^*}^{-1}
\nabla\times b^*,
\end{equation}
\begin{equation}\label{eq:L-P}
\mathrm{L}_{P}(w^*) =v^*-(b^*\cdot\nabla)
b^*-{h^*}^{-1}\nabla\big({h^*}^{-1}\big).
\end{equation}
Moreover, we have $L(w^*)=0$ if $(h^*,h^*b^*,h^*d^*,h^*v^*)$ is also a solution to the Darcy MHD \eqref{eq:dd-h}-\eqref{eq:dd-div}.
\end{lemma}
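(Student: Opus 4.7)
The plan is a direct computation. First I rewrite the relative entropy density to separate off the original entropy,
\begin{equation*}
\frac{|\widetilde{U}|^2}{2h} = \frac{1+B^2}{2h} - B\cdot b^* - \frac{1}{h^*} + \frac{h}{2}\bigl(|b^*|^2 + {h^*}^{-2}\bigr),
\end{equation*}
so that upon integrating over $\mathbb{T}^3$ and differentiating in $t$, the first summand is handled by the energy identity \eqref{eq:dde1}, producing the dissipation $-\int(|D|^2+|P|^2)/h$. For the remaining three summands I apply $\partial_t h = -\nabla\cdot P$ and $\partial_t B = -\nabla\times(B\times v + d)$ and integrate by parts in $x$; after this step the only surviving time derivatives of $w^*$ are $\partial_t b^*$ and $\partial_t h^{*-1}$, matching the time-derivative content of $L_B$ and $L_h$.

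Next, I absorb the dissipation into the diagonal $2I_3$ blocks of $Q$ by completing the squares,
\begin{equation*}
-\int\frac{|D|^2}{h} = -\int\frac{|D-hd^*|^2}{h} - 2\int D\cdot d^* + \int h|d^*|^2,\qquad -\int\frac{|P|^2}{h} = -\int\frac{|P-hv^*|^2}{h} - 2\int P\cdot v^* + \int h|v^*|^2.
\end{equation*}
The squared pieces $-|D-hd^*|^2/h$ and $-|P-hv^*|^2/h$ supply the $(3,3)$ and $(4,4)$ blocks of $-\widetilde W^{\mathrm T} Q\widetilde W/(2h)$. The cross terms $-2\int D\cdot d^*$ and $-2\int P\cdot v^*$ are then expanded using the algebraic constraints $D=\nabla\times(B/h)$ and $P=\nabla\cdot(B\otimes B/h)+\nabla(h^{-1})$ followed by a further integration by parts, and the resulting expressions are regrouped using the decompositions $B/h = (B-hb^*)/h + b^*$ and $1/h = (1-h/h^*)/h + 1/h^*$, which separates them cleanly into pieces quadratic in $\widetilde W$ (destined for $Q$) and pieces linear in $\widetilde W$ (destined for $L$).

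The off-diagonal structure of $Q$ then falls into place. The $\nabla\times d^*$ coupling between $(1-h/h^*)$ and $B-hb^*$, the $-\nabla\times b^*$ coupling between $(1-h/h^*)$ and $D-hd^*$, and the $-2\nabla\cdot v^*$ coefficient of $(1-h/h^*)^2$ all originate from the above expansion of the cross terms; the $-\nabla v^* - \nabla v^{*\mathrm T}$ block comes from the symmetrisation $(B\otimes B){:}\nabla v^* = \tfrac12 B^{\mathrm T}(\nabla v^* + \nabla v^{*\mathrm T}) B$ produced by $-2\int P\cdot v^*$. The antisymmetric $\nabla b^* - \nabla b^{*\mathrm T}$ block coupling $B-hb^*$ and $P-hv^*$ emerges from the identity $\nabla(|b^*|^2/2) = (b^*\cdot\nabla)b^* + b^*\times(\nabla\times b^*)$: integrating by parts the term $-\tfrac12\int (\nabla\cdot P)|b^*|^2$ arising in $\frac{d}{dt}\int h|b^*|^2/2$ produces $\int P\cdot(b^*\times(\nabla\times b^*))$, which, combined with $\int(B\times v)\cdot(\nabla\times b^*)$ coming from $-\frac{d}{dt}\int B\cdot b^*$, assembles via a standard triple-product manipulation into $\int((B-hb^*)/h)\cdot((\nabla\times b^*)\times P)$, i.e.\ exactly the $(2,4)$ off-diagonal entry of $Q$. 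Everything that remains is automatically linear in $\widetilde W$ and assembles into $\int\widetilde W\cdot L(w^*)$.

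The ``moreover'' clause is then immediate: as displayed right after \eqref{eq:dd-div}, the equations $L_h = L_B = L_D = L_P = 0$ are precisely the Darcy MHD system written in the non-conservative variables $(\tau,b,d,v)=(h^{*-1},b^*,d^*,v^*)$. The main obstacle throughout is pure bookkeeping: the computation produces a large number of terms depending on various first derivatives of $w^*$, and one must verify that every piece lands in exactly one slot of $Q$ or $L$ with the correct coefficient. The cleanest way to organise the check is, for each atomic expression (e.g.\ $\partial_t b^*$, $\nabla v^*$, $\nabla\times b^*$, $\nabla h^{*-1}$), to gather its full coefficient after all integrations by parts and match it against the unique entry of $L$ or $Q$ in which it is permitted to appear.
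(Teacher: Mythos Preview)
Your proposal is correct and follows essentially the same strategy as the paper's proof: decompose the relative entropy as $\int|\widetilde U|^2/(2h)=\int(1+B^2)/(2h)+\int h|u^*|^2/2-\int U\cdot u^*$, use the energy identity \eqref{eq:dde1} for the first piece, and then integrate by parts using the equations. The only organisational difference is that the paper first collects everything into the form $-\int W^{\mathrm T}Q(w^*)W/(2h)$ with the \emph{unshifted} variable $W=(1,B,D,P)$, and only at the very end substitutes $W=\widetilde W+hw^*$ to split off the quadratic, linear, and constant-in-$\widetilde W$ pieces in one stroke (computing $Q(w^*)w^*$ and $(w^*)^{\mathrm T}Q(w^*)w^*$ explicitly), whereas you complete the squares in $D,P$ and decompose $B/h$, $1/h$ earlier; both routes amount to the same calculation.
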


With the above lemma and the nice formula of the relative entropy, we can apply the Gronwall's lemma to estimate the growth of the relative entropy. This is the start point of introducing the concept of dissipative solution to study such degenerate parabolic system.

Now, first, we see that the matrix valued function $Q(w^*)$ in \eqref{eq:dd-rel-1} is a symmetric and its right down $6\times6$ block is always positive definite. Now let us use $I_{n:m}$ to represent the $n\times n$ diagonal matrix whose first $m$ terms are 1 while the rest terms are 0, let $I_{d}$ be the $d\times d$ identity matrix. Then it is easy to verify that for any $\delta\in(0,2)$, there is a constant $r_0=r_0(w^*,\delta,T)$, such that for all $r\geq r_0$ and $(t,x)\in[0,T]\times\mathbb{T}^3$, we have
$$Q(w^*)+rI_{10:4}\geq(2-\delta)I_{10}>0.$$
For the convenience of writing, let us denote,
\begin{equation}
Q_r(w^*)=Q(w^*)+rI_{10:4}.
\end{equation}
Then, \eqref{eq:dd-rel-1} can be written as,
\begin{equation}
\left(\frac{{\rm d}}{{\rm d}t}-r\right)\int\frac{\big|\widetilde{U}\big|^2}{2h} +\int\frac{\widetilde{W}^{\rm T}Q_r(w^*)\widetilde{W}}{2h} +\int \widetilde{W}\cdot\mathrm{L}(w^*)=0.
\end{equation}
We integrate it from 0 to $t$, then we have
\begin{equation}\label{eq:ddin-3}
\int\frac{\big|\widetilde{U}(t)\big|^2}{2h(t)}+\int^{t}_{0}e^{r(t-s)}\left[\int\frac{\widetilde{W}^{{\rm T}}Q_r(w^*)\widetilde{W}}{2h} + \widetilde{W}\cdot\mathrm{L}(w^*)\right]{\rm d}s = e^{rt}\int\frac{\big|\widetilde{U}(0)\big|^2}{2h(0)}.
\end{equation}
Notice that the above equality have a nice structure since the left hand side is in fact a convex functional of $(h,B,D,P)$. It is even possible to extend the meaning of the equality to Borel measures (cf. \cite{DT}). In our case, it is quite simple and direct. For any Borel measure $\rho\in C(\mathbb{T}^3,\mathbb{R})'$ and vector-valued Borel measure $U\in C(\mathbb{T}^3,\mathbb{R}^4)'$, we define
\begin{equation}\label{eq:dddfl}
\Lambda(\rho,U)=\sup\left\{
\int_{\mathbb{T}^3} a\rho+A\cdot U,\;\;\;a+\frac{1}{2}|A|^2\le 0\right\}
\in [0,+\infty],
\end{equation}
where the supremum is taken over all $(a,A)\in C(\mathbb{T}^d;\mathbb{R}\times\mathbb{R}^4)$. As an easy exercise, we can check that
\begin{equation}\label{eq:lam1}
\Lambda(\rho,U)=\begin{cases}
\displaystyle{\frac{1}{2}\int_{\mathbb{T}^3}|u|^2\rho}, & \rho\geq 0,\;U\ll\rho,\;U=u\rho,\;u\in L^2_{\rho}\\
+\infty, & {\rm otherwise}
\end{cases}
\end{equation}
So we can see that $\Lambda(\rho,U)$ is somehow a generalization of the functional $\int\frac{|U|^2}{2\rho}$ to Borel measures. Similarly, we can define a functional in terms of the space time integral of
$$\int_s^t\int_{\mathbb{T}^3}\frac{W^{\rm T}QW}{2\rho}$$
More precisely, for any Borel measure $\rho\in C([0,T]\times\mathbb{T}^3,\mathbb{R})'$, vector-valued Borel measure $W\in C([0,T]\times\mathbb{T}^3,\mathbb{R}^{10})'$, and matrix valued function $Q\in C([0,T]\times\mathbb{T}^3,\mathbb{R}^{10\times10})$ which is always positive definite, we define
\begin{equation}\label{eq:dddft}
\widetilde{\Lambda}(\rho,W,Q;s,t)=\sup\left\{
\int_s^t\int_{\mathbb{T}^3} a\rho+A\cdot W,\;\;\;a+\frac{1}{2}|\sqrt{Q^{-1}}A|^2\leq 0\right\}
\in [0,+\infty],
\end{equation}
where the supremum is taken over all $(a,A)\in C([s,t]\times\mathbb{T}^d;\mathbb{R}\times\mathbb{R}^{10})$, $0\leq s<t\leq T$. Similarly, we have
\begin{equation}\label{eq:lam2}
\widetilde{\Lambda}(\rho,W,Q;s,t)=\begin{cases}
\displaystyle{\frac{1}{2}\int_s^t\int_{\mathbb{T}^3}|\sqrt{Q}w|^2\rho}, & {\rm on\ }[s,t],\;\rho\geq 0,\;W\ll\rho,\;W=w\rho,\;w\in L^2_{\rho}\\
+\infty, & {\rm otherwise}
\end{cases}
\end{equation}
By using the above defined functional, \eqref{eq:ddin-3} can be written as
\begin{equation}
e^{-rt}\Lambda(h(t),\widetilde{U}(t)) + \widetilde{\Lambda}(h,\widetilde{W},e^{-rs}Q_r(w^*);0,t) + R(t) =\Lambda(h(0),\widetilde{U}(0)).
\end{equation}
where
$$R(t)=\int^{t}_{0}\int_{\mathbb{T}^3} e^{-rs}\widetilde{W}\cdot\mathrm{L}(w^*).$$
Now, instead of having an equality, we would like to look for all measure valued solutions such that their relative entropies $\Lambda(h,\widetilde{U})$ are less than the initial data in \eqref{eq:ddin-3}. This is the idea of introducing the concept of dissipative solution.

%%%%%%%%%%%%%%%%%%%%%%%%%%%%%%%%%%%%%%%%%%%%%%%%%%

\subsection{Definition of the dissipative solutions}

With the help of \eqref{eq:ddin-3} and the introducing of $\Lambda(h,U)$. Now we can give a definition of the dissipative solution of (DMHD). Our definition reads,

\begin{definition}\label{def:dddf1}
We say that $(h,B,D,P)$ with $h\in C([0,T],C(\mathbb{T}^3,\mathbb{R})'_{w^{*}})$, $B\in C([0,T],C(\mathbb{T}^3,\mathbb{R}^3)'_{w^{*}})$, $D,P\in C([0,T]\times\mathbb{T}^3,\mathbb{R}^3)'$, is a dissipative solution of (DMHD) \eqref{eq:dd-h}-\eqref{eq:dd-div} with initial data $h_{0}\in C(\mathbb{T}^3,\mathbb{R})',B_{0}\in C(\mathbb{T}^3,\mathbb{R}^{3})'$ if and only if
\\
\\
{\rm\bf(i)} $h(0)=h_{0}$, $B(0)=B_{0}$, $\Lambda(h_{0},U_{0})<\infty$, where $U_0=(\mathcal{L},B_0)$, $\mathcal{L}$ is the Lebesgue measure on $\mathbb{T}^3$.
\\
{\rm\bf(ii)} $(h,B)$ is bounded in $C^{0,\frac{1}{2}}([0,T],C(\mathbb{T}^3,\mathbb{R}^4)'_{w^*})$ by some constant that depends only on $T$ and $(h_0,B_0)$.
\\
{\rm\bf(iii)} \eqref{eq:dd-h} and \eqref{eq:dd-div} is satisfied in the sense of distributions. More precisely, for all $u\in C^1([0,T]\times\mathbb{T}^3,\mathbb{R})$ and $t\in[0,T]$, we have
\begin{equation}\label{eq:dddf-1}
\int_0^t\int_{\mathbb{T}^3}\partial_t u\; h + \nabla u\cdot P =\int_{\mathbb{T}^3}u(t)h(t)-\int_{\mathbb{T}^3} u(0)h(0)
\end{equation}
\begin{equation}\label{eq:dddf-2}
\int_{\mathbb{T}^3} \nabla u(t)\cdot B(t)=0
\end{equation}
\\
{\rm\bf(iv)} For all $t\in[0,T]$ and all $v^{*},b^{*},d^{*}\in C^{1}([0,T]\times\mathbb{T}^3,\mathbb{R}^3),\;0<h^{*}\in C^{1}([0,T]\times\mathbb{T}^3,\mathbb{R})$ and all real number $r\geq r_0$, the following inequality always holds
\begin{equation}\label{eq:dddf-3}
e^{-rt}\Lambda(h(t),\widetilde{U}(t)) + \widetilde{\Lambda}(h,\widetilde{W},e^{-rs}Q_r(w^*);0,t) + R(t) \leq\Lambda(h(0),\widetilde{U}(0))
\end{equation}
where
$$\widetilde{U}=\big(\mathcal{L}-{h^*}^{-1}h,B-hb^{*}\big),\;\;\widetilde{W}=\left(\widetilde{U},D-hd^{*},P-hv^{*}\right)$$
$$w^*=({h^*}^{-1},b^*,d^*,v^*),$$
$Q_r(w^*)$ is a symmetric matrix defined by $Q_r(w^*)=Q(w^*)+rI_{10:4}$, where $Q(w^*)$ is defined in \eqref{eq:ddm-1}. $r_0$ is a constant chosen such that $Q_{r_0}(w^*)\geq I_{10}$ for all $(t,x)\in [0,T]\times\mathbb{T}^3$. $R(t)$ is a functional that depends linearly on $\widetilde{W}$ with the expression
\begin{equation}
R(t)=\int^{t}_{0}\int_{\mathbb{T}^3} e^{-rs}\widetilde{W}\cdot\mathrm{L}(w^*).
\end{equation}
where $\mathrm{L}(w^*)$ is defined in \eqref{eq:L-h}-\eqref{eq:L-P}.

\end{definition}

Note that in the above definition, $C(\mathbb{T}^3,\mathbb{R})'_{w^{*}}$ is the dual space of $C(\mathbb{T}^3,\mathbb{R})$ equipped with the weak-$*$ topology. It is a metrizable space, we can define a metric that is consistent with the weak-$*$ topology, for example, we can take
\begin{equation}\label{eq:ddpf-inapm}
d(\rho,\rho')=\sum_{n\geq0}2^{-n}\frac{\left|\big\langle \rho-\rho',f_{n}\big\rangle\right|}{1+\left|\big\langle \rho-\rho',f_{n}\big\rangle\right|}
\end{equation}
where $\{f_{n}\}_{n\geq0}$ is a smooth dense subset of the separable space $C(\mathbb{T}^3,\mathbb{R})$, $\langle\cdot,\cdot\rangle$ denote the duality pairing of $C(\mathbb{T}^3,\mathbb{R})$ with its dual space.

%%%%%%%%%%%%%%%%%%%%%%%%%%%%%%%%%%%%%%%%%%%%%%%%%%
%%%%%%%%%%%%%%%%%%%%%%%%%%%%%%%%%%%%%%%%%%%%%%%%%%
%%%%%%%%%%%%%%%%%%%%%%%%%%%%%%%%%%%%%%%%%%%%%%%%%%

\section{Properties of the Dissipative Solutions}\label{sec:4}

In this section, we will study some properties of the dissipative solutions that we define in the previous part. We will show that the dissipative solutions satisfy the weak-strong uniqueness, the set of solutions are convex and compact in the weak-$*$ topology, and under what situation, the dissipative solutions become strong solutions.

%%%%%%%%%%%%%%%%%%%%%%%%%%%%%%%%%%%%%%%%%%%%%%%%%%

\subsection{Consistency with smooth solutions}

In this part, let's look at a very interesting question about the dissipative solution. It has been shown that, in \lmref{lm:dd-entropy}, any strong solution $(h,B,D,P)$ to (DMHD) satisfies the energy dissipative inequality \eqref{eq:dddf-3}, so it is naturally a dissipative solution. On the contrary, it is generally not true that a dissipative solution is a strong solution. However, if we know that the dissipative solution has some regularity (for example $C^1$ solutions), then the reverse statement is true. We summarize our result in the following proposition:
\begin{proposition}
Suppose $(h,B,D,P)\in C^1([0,T]\times\mathbb{T}^3,\mathbb{R}^{10})$ is a dissipative solution to (DMHD) \eqref{eq:dd-h}-\eqref{eq:dd-div} in the sense of \defref{def:dddf1}, then is must be a strong solution.
\end{proposition}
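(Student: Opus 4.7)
The plan is to plug two families of $C^1$ test functions $(h^*,b^*,d^*,v^*)$ into the dissipative inequality \eqref{eq:dddf-3}: a first family that pins $(h^*,b^*)$ to $(h,B/h)$ and varies $(d^*,v^*)$, from which the constitutive relations \eqref{eq:dd-con} pop out; and a second, perturbative family around the resulting ``exact'' choice, from which the induction equation \eqref{eq:dd-B} falls out at first order. Throughout I assume (as is implicit in \lmref{lm:dd-entropy}) that $h > 0$, so that $B/h$, $D/h$, $P/h$ are admissible $C^1$ test fields.

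First I would take $h^* = h$, $b^* = B/h$, with $d^*,v^*$ arbitrary, and abbreviate $d := D/h$, $v := P/h$. Then $\widetilde{U} \equiv 0$ and $\widetilde{W}/h = (0,0,d-d^*,v-v^*)$. Since the bottom-right $6\times 6$ block of $Q_r(w^*)$ equals $2I_6$, one gets $\widetilde{\Lambda} = \int_0^t\!\int e^{-rs}h(|d-d^*|^2+|v-v^*|^2)$; moreover \eqref{eq:L-h}--\eqref{eq:L-P} reduce to $L_h = L_B = 0$, $L_D = d^*-d_\star$ and $L_P = v^*-v_\star$, where $d_\star := h^{-1}\nabla\times(B/h)$ and $v_\star := ((B/h)\cdot\nabla)(B/h) + h^{-1}\nabla(h^{-1})$. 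The identity $(d-d^*)+(d^*-d_\star) = d - d_\star$ (and its $v$-analogue) collapses \eqref{eq:dddf-3} to
$$\int_0^t\!\!\int_{\mathbb{T}^3} e^{-rs} h\big[(d-d^*)\cdot(d-d_\star) + (v-v^*)\cdot(v-v_\star)\big] \le 0$$
for every $C^1$ pair $(d^*,v^*)$. Substituting $d^* = d - \varepsilon\phi$, $v^* = v - \varepsilon\psi$, dividing by $\varepsilon > 0$, and flipping the sign of $(\phi,\psi)$ forces equality; with $h > 0$ this gives $d = d_\star$ and $v = v_\star$, i.e.\ \eqref{eq:dd-con}.

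With the constitutive relations in hand, set $w_0^* := (h^{-1},B/h,d,v)$. Direct computation gives $L_h(w_0^*) = -h^{-2}(\partial_t h + \nabla\cdot P) = 0$ by \eqref{eq:dddf-1}, and $L_D(w_0^*) = L_P(w_0^*) = 0$ by the previous step, so the only potential obstruction is
$$\mathcal{I} := L_B(w_0^*) = \partial_t(B/h) + (v\cdot\nabla)(B/h) - ((B/h)\cdot\nabla)v + h^{-1}\nabla\times d.$$
Now I would perturb in $b^*$ alone: take $w^* = (h^{-1},B/h+\varepsilon\phi,d,v)$ for arbitrary $\phi\in C^1$. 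Then $\widetilde{U} = (0,-\varepsilon h\phi)$ and $\widetilde{W} = (0,-\varepsilon h\phi,0,0)$, so every entropy contribution in \eqref{eq:dddf-3} is manifestly $O(\varepsilon^2)$ (each involves $|\widetilde{U}|^2$ or $|\widetilde{W}|^2$). On the other hand $L_B(w^*) = \mathcal{I} + O(\varepsilon)$, and the $O(\varepsilon)$ values of $L_h, L_D, L_P$ are annihilated by the vanishing first, third and fourth components of $\widetilde{W}$, so
$$R(t) = -\varepsilon\int_0^t\!\!\int_{\mathbb{T}^3} e^{-rs} h\,\phi\cdot\mathcal{I} + O(\varepsilon^2).$$
Dividing \eqref{eq:dddf-3} by $\varepsilon$, letting $\varepsilon\to 0^+$, and flipping $\phi\to -\phi$ yields $\int_0^t\!\int e^{-rs} h\,\phi\cdot\mathcal{I} = 0$ for every $\phi$ and $t$, hence $\mathcal{I}\equiv 0$ wherever $h > 0$. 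Multiplying $\mathcal{I} = 0$ by $h$ and combining with $\partial_t h + \nabla\cdot(hv) = 0$, $\nabla\cdot B = 0$, and the identity $\nabla\times(B\times v) = B(\nabla\cdot v) + (v\cdot\nabla)B - (B\cdot\nabla)v$ reproduces \eqref{eq:dd-B}, so $(h,B,D,P)$ is a strong solution.

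The main effort is purely algebraic bookkeeping: the ``telescoping'' identity in the first step which collapses $\widetilde{\Lambda} + R(t)$ to a single bilinear pairing, and the $\varepsilon$-expansion in the second step which places $\phi\cdot\mathcal{I}$ strictly at order $\varepsilon$ while every other contribution sits at order $\varepsilon^2$. Both become routine once the block structure of $Q_r(w^*)$ in \eqref{eq:ddm-1} is kept in mind and $L(w^*)$ is expanded componentwise. The regularization parameter $r$ plays no essential role: for each choice of $w^*$ above, $r_0$ is a finite constant depending only on $(h,B,D,P)$ and $\phi$, and the weight $e^{-rs}$ is uniformly bounded on $[0,T]$, so the limit $\varepsilon\to 0$ commutes with every term in sight.
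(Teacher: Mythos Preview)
Your argument is correct, with one harmless misstatement: in Step~1 you assert $L_h=L_B=0$, but with $h^*=h$, $b^*=B/h$ and \emph{arbitrary} $(d^*,v^*)$ these quantities depend on $v^*,d^*$ and need not vanish. This does not matter, since the corresponding components of $\widetilde{W}$ are zero (because $\widetilde{U}\equiv 0$), so they drop out of $R(t)$ anyway; you should phrase it that way.

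Your route is genuinely different from the paper's. The paper recomputes the relative-entropy identity of \lmref{lm:dd-entropy} while keeping track of the residuals
\[
\phi=\partial_tB+\nabla\times\!\Big(\tfrac{D+B\times P}{h}\Big),\quad
\psi=D-\nabla\times\!\Big(\tfrac{B}{h}\Big),\quad
\varphi=P-\nabla\cdot\!\Big(\tfrac{B\otimes B}{h}\Big)-\nabla(h^{-1}),
\]
obtaining an exact identity whose right-hand side is $\int[\phi\cdot(b-b^*)+\psi\cdot(d-d^*)+\varphi\cdot(v-v^*)]$. It then makes the single choice $b^*=b-\phi$, $d^*=d-\psi$, $v^*=v-\varphi$, so the right-hand side becomes $\int(\phi^2+\psi^2+\varphi^2)$; the dissipative inequality forces this to be nonpositive after time integration, hence $\phi=\psi=\varphi=0$ in one stroke. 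Your approach instead works directly from the definition, exploiting the block structure of $Q_r(w^*)$: first the $2I_6$ bottom-right block isolates the constitutive relations, then a first-order perturbation in $b^*$ isolates the induction equation. The paper's method is shorter and handles all three residuals simultaneously, but it requires re-deriving the Appendix identity; your two-step method is more elementary in that it never leaves the dissipative inequality itself and makes the role of each block of $Q$ transparent.
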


\begin{proof}
The proof follows almost the same computation as in \lmref{lm:dd-entropy}. First, the equation \eqref{eq:dd-h} and \eqref{eq:dd-div} is naturally satisfied by the definition of dissipative solution. Our goal is to show that $(h,B,D,P)$ also satisfy \eqref{eq:dd-B},\eqref{eq:dd-con} in the strong sense. o prove this, we denote
$$\phi=\partial_{t}B +\nabla\times\left(\frac{D + B \times P}{h}\right),\;\;\;\psi=D - \nabla\times\left(\frac{B}{h}\right)$$
$$\varphi=P - \nabla\cdot\left(\frac{B\otimes B}{h}\right)-\nabla\left(\frac{1}{h}\right)$$
We only need to prove that $\phi=\psi=\varphi=0$. In fact, for any test function $v^{*},b^{*},d^{*}\in C^{1}([0,T]\times\mathbb{T}^3,\mathbb{R}^3),h^{*}>0\in C^{1}([0,T]\times\mathbb{T}^3,\mathbb{R})$, we follow the same computation as in \lmref{lm:dd-entropy} (this is shown in the Appendix), then we can the following equality,
$$
\frac{{\rm d}}{{\rm d}t}\int\frac{\big|\widetilde{U}\big|^2}{2h} + \int\frac{W^{\rm T}Q(w^*)W}{2h} +\int\widetilde{W}\cdot\mathrm{L}(w^*) = \int\Big[\phi\cdot \big(b-b^*\big) +\psi\cdot \big(d-d^*\big) + \varphi\cdot \big(v-v^*\big) \Big]
$$
Now, let's set $b^*=b-\phi$, $d^*=d-\psi$, $v^*=v-\varphi$, then we have
$$
\frac{{\rm d}}{{\rm d}t}\int\frac{\big|\widetilde{U}\big|^2}{2h} + \int\frac{W^{\rm T}Q(w^*)W}{2h} +\int\widetilde{W}\cdot\mathrm{L}(w^*) = \int\big(\phi^2 +\psi^2 + \varphi^2 \big)
$$
For $r$ big enough, we have
\begin{multline*}
e^{-rT}\int\frac{\big|\widetilde{U}(T)\big|^2}{2h(T)}+\int^{T}_{0}e^{-rs}\left[\int\frac{\widetilde{W}^{{\rm T}}Q_r(w^*)\widetilde{W}}{2h} + \widetilde{W}\cdot\mathrm{L}(w^*)\right]{\rm d}s -\int\frac{\big|\widetilde{U}(0)\big|^2}{2h(0)} \\ =\int^{T}_{0}\int e^{-rs}\big(\phi^2 +\psi^2 + \varphi^2 \big)
\end{multline*}
By the definition of dissipative solution, we have
$$
\int^{T}_{0}\int e^{-rs}\big(\phi^2 +\psi^2 + \varphi^2 \big)\leq 0
$$
This implies that $\phi\equiv\varphi\equiv\psi\equiv0$, which completes the proof.

\end{proof}

%%%%%%%%%%%%%%%%%%%%%%%%%%%%%%%%%%%%%%%%%%%%%%%%%%

\subsection{Weak-strong uniqueness and stability result}

The weak-strong uniqueness is essentially an important property for a suitable concept of ``weak'' solution of a given evolution system. By the weak-strong uniqueness, we mean that any weak solution must coincide with a strong solution emanating from the same initial data as long as the latter exists. In other words, the strong solutions must be unique within the class of weak solutions. This kind of problem has been widely studied in various kinds of equations (Navier-Stokes, Euler, etc.), even for measure valued solutions \cite{BDS}. In our (DMHD), we will show that the dissipative solution also enjoy this kind of property. First, let's us show a stability estimate.

\begin{proposition}\label{prop:dd-stability}
Suppose that $(h^*>0,B^*,D^*,P^*)$ is a classical (at least $C^1$) solution of (DMHD) \eqref{eq:dd-h}-\eqref{eq:dd-div} with initial value $(h^{*},B^{*})|_{t=0}=(h_{0}^{*},B_{0}^{*})$. $(h,B,D,P)$ is a dissipative solution with initial value $(h,B)|_{t=0}=(h_{0},B_{0})$. Let us denote
$$\widetilde{U}=\big(\mathcal{L}-h{h^{*}}^{-1},B-hb^{*}\big),\;\;\;\widetilde{W}=\big(\widetilde{U},D-hd^{*},P-hv^{*}\big)$$
where $b^{*}=B^*/{h^{*}},\;d^{*}=D^*/{h^{*}},\;v^*=P^*/{h^*}$. Then, for any $t\in[0,T]$, there exist a constant $C$ that depends only on the choice of $(h^{*},B^{*},D^{*},P^{*})$, the value of $\Lambda(h_0,\widetilde{U}_0)$ and $T$, such that the following estimates hold
\begin{equation}\label{eq:dd-sta-est1}
\|\widetilde{U}(t)\|_{TV}^2\leq Ce^{Ct}\Lambda(h_0,\widetilde{U}_0),\;\;\;\|\widetilde{W}\|_{TV^{*}}^{2}\leq Ce^{CT}\Lambda(h_0,\widetilde{U}_0).
\end{equation}
Here $\|\cdot\|_{TV}$, $\|\cdot\|_{TV^{*}}$ respectively represent the total variation of measures on $\mathbb{T}^3$ and $[0,T]\times\mathbb{T}^3$. Furthermore, we have that
\begin{equation}\label{eq:dd-sta-est2}
\|h(t)-h^{*}(t)\|_{TV}^2,\;\;\|B(t)-B^{*}(t)\|_{TV}^2\leq Ce^{Ct}\Lambda(h_0,\widetilde{U}_0),
\end{equation}
\begin{equation}\label{eq:dd-sta-est3}
\|D-D^{*}\|_{TV^{*}}^{2},\;\;\|P-P^{*}\|_{TV^{*}}^2\leq Ce^{CT}\Lambda(h_0,\widetilde{U}_0).
\end{equation}

\end{proposition}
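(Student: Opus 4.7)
The central idea is to plug the smooth solution $(h^*,B^*,D^*,P^*)$ into the dissipative inequality \eqref{eq:dddf-3} with $w^*=((h^*)^{-1},b^*,d^*,v^*)$, and convert the resulting functional bound into total-variation estimates. First I observe that since $(h^*,B^*,D^*,P^*)$ is a classical solution of (DMHD), the concluding statement of \lmref{lm:dd-entropy} forces $\mathrm{L}(w^*)\equiv 0$, so the linear remainder $R(t)$ in \eqref{eq:dddf-3} vanishes identically. Fixing any $r\ge r_0(w^*)$, inequality \eqref{eq:dddf-3} collapses to
\[
e^{-rt}\Lambda\!\bigl(h(t),\widetilde{U}(t)\bigr) + \widetilde{\Lambda}\!\bigl(h,\widetilde{W},e^{-rs}Q_r(w^*);0,t\bigr) \le \Lambda\!\bigl(h_0,\widetilde{U}_0\bigr).
\]
Both terms on the left are non-negative, so I can separately retain $\Lambda(h(t),\widetilde{U}(t))\le e^{rt}\Lambda(h_0,\widetilde{U}_0)$ and $\widetilde{\Lambda}(h,\widetilde{W},e^{-rs}Q_r(w^*);0,t)\le \Lambda(h_0,\widetilde{U}_0)$.

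Next I convert these two functional inequalities into $L^1$ bounds by Cauchy--Schwarz. Testing the distributional continuity equation \eqref{eq:dddf-1} against the constant $u\equiv 1$ yields the conservation of mass $\int h(t)=\int h_0=:M_0$, a finite quantity bounded in terms of $\Lambda(h_0,\widetilde{U}_0)$ and the profile of $h^*$. The finiteness of $\Lambda(h(t),\widetilde{U}(t))$ together with \eqref{eq:lam1} forces $h(t)\ge 0$ and $\widetilde{U}(t)=u(t)h(t)$ with $u(t)\in L^2_{h(t)}$, so that
\[
\|\widetilde{U}(t)\|_{TV} = \int |u|\,h \le \Bigl(\int h\Bigr)^{1/2}\Bigl(\int |u|^2 h\Bigr)^{1/2} = \sqrt{M_0}\sqrt{2\Lambda(h(t),\widetilde{U}(t))},
\]
which squares to the first bound in \eqref{eq:dd-sta-est1}. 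For $\widetilde{W}$ I exploit the lower bound $Q_r(w^*)\ge I_{10}$ that defines $r_0$ to conclude $\widetilde{\Lambda}\ge \tfrac{e^{-rT}}{2}\int_0^t\!\int|w|^2 h$ with $\widetilde{W}=wh$ (using \eqref{eq:lam2}), and the same Cauchy--Schwarz trick on the product space $[0,T]\times\mathbb{T}^3$ gives the second bound in \eqref{eq:dd-sta-est1}.

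Finally, to descend from $\widetilde{U},\widetilde{W}$ to the differences $h-h^*$, $B-B^*$, $D-D^*$, $P-P^*$ asserted in \eqref{eq:dd-sta-est2}--\eqref{eq:dd-sta-est3}, I use the algebraic identities
\[
h-h^* = -h^*\bigl(\mathcal{L}-(h^*)^{-1}h\bigr), \quad B-B^* = (B-hb^*) + b^*(h-h^*),
\]
and the analogous decompositions for $D-D^*$ and $P-P^*$, combined with the uniform bounds on $h^*,b^*,d^*,v^*$ guaranteed by the $C^1$ regularity on the compact $[0,T]\times\mathbb{T}^3$. The triangle inequality then transfers the $TV$ (resp.\ $TV^*$) estimates from $\widetilde{U}$ and $\widetilde{W}$ to the target quantities. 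The only delicate book-keeping point --- and the place where smoothness of the reference solution is genuinely used, rather than just integrability --- is the measure-theoretic interpretation of products such as $(h^*)^{-1}h$ and $b^*(h-h^*)$; this is routine because $h^*,b^*,d^*,v^*$ are continuous (with $h^*$ bounded away from $0$) and $h$ is a bounded family of finite Radon measures by property (ii) of \defref{def:dddf1}, so these products make sense as ordinary multiplications of a Radon measure by a bounded continuous function.
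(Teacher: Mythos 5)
Your proposal follows essentially the same route as the paper: take the smooth solution as test functions so that $\mathrm{L}(w^*)\equiv 0$ and $R\equiv 0$, split the resulting inequality into its two nonnegative pieces, convert to total-variation bounds via Cauchy--Schwarz together with mass conservation, and descend to $h-h^*$, $B-B^*$, $D-D^*$, $P-P^*$ by triangle-inequality decompositions against the bounded continuous fields $h^*,b^*,d^*,v^*$. The one step you assert rather than prove is that $M_0=\int h_0$ is controlled by $\Lambda(h_0,\widetilde{U}_0)$ and $h_0^*$; the paper supplies this with the absorption argument $\int h_0\le \int|h_0-h_0^*|+\int h_0^*\le \tfrac{1}{2}\int h_0+\|h_0^*\|_\infty^2\Lambda(h_0,\widetilde{U}_0)+\int h_0^*$, which follows from the same Cauchy--Schwarz bound on $\|\mathcal{L}-h_0(h_0^*)^{-1}\|_{TV}$ that you already have.
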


\begin{proof}
The proof is very simple. We just need to take $(h^*,b^*,d^*,v^*)$ defined in the proposition as our test functions and apply it to the energy dissipative inequality \eqref{eq:dddf-3}. Because $(h^{*},B^{*},D^{*},P^{*})$ is a strong solution, so we have $\mathrm{L}(w^*)\equiv0$, where $w^*=({h^*}^{-1},b^*,d^*,v^*)$. Let $r_0>0$ be a constant such that $Q_{r_0}(w^*)\geq I_{10}$ for all $(t,x)\in [0,T]\times\mathbb{T}^3$. Then for $r\geq r_0$ and $t\in[0,T]$, \eqref{eq:dddf-3} gives
$$e^{-rt}\Lambda(h(t),\widetilde{U}(t)) + \widetilde{\Lambda}(h,\widetilde{W},e^{-rs}Q_r(w^*);0,t) \leq\Lambda(h_0,\widetilde{U}_0).$$
So we have
$$\Lambda(h(t),\widetilde{U}(t))\leq e^{rt}\Lambda(h_0,\widetilde{U}_0),$$
and, since $e^{-rs}Q_r(w^*)\geq e^{-rT}I_{10}$, we have
$$e^{-rT}\widetilde{\Lambda}(h,\widetilde{W},I_{10};0,T) \leq \widetilde{\Lambda}(h,\widetilde{W},e^{-rs}Q_r(w^*);0,T) \leq\Lambda(h_0,\widetilde{U}_0).$$
Now, since $h$ satisfies \eqref{eq:dd-h} in the sense of distributions, we have
$$\int_{\mathbb{T}^3}h(t)=\int_{\mathbb{T}^3}h_0.$$
Then, by the expression of $\Lambda$, $\widetilde{\Lambda}$ in \eqref{eq:lam1},\eqref{eq:lam2}, and Cauchy-Schwarz inequality, we have
$$\|\widetilde{U}(t)\|_{TV}^2\leq2\Lambda(h(t),\widetilde{U}(t))\int_{\mathbb{T}^3}h(t)\leq 2e^{rt}\Lambda(h_{0},\widetilde{U}_{0})\int_{\mathbb{T}^3}h_0$$
$$\|\widetilde{W}\|_{TV^{*}}^{2}
\leq 2\widetilde{\Lambda}(h,\widetilde{W},I_{10};0,T)\int_0^T\int_{\mathbb{T}^3}h\leq 2Te^{rT}\Lambda(h_{0},\widetilde{U}_{0})\int_{\mathbb{T}^3}h_0$$
Now, we would like to estimate the value of $\|h_0\|_{TV}$, given the value of $\Lambda_{0}=\Lambda(h_{0},\widetilde{U}_{0})$ and $h^*_0$. Since
$$2\Lambda_{0}\int_{\mathbb{T}^3}h_0\geq \|\widetilde{U}_{0}\|_{TV}^2 \geq  \left\|\mathcal{L}-h_{0}{h_{0}^{*}}^{-1}\right\|_{TV}^2\geq \|h_{0}^{*}\|_{\infty}^{-2} \left(\int_{\mathbb{T}^3} |h_{0}^{*}-h_{0}|\right)^2$$
So we have
$$
\int_{\mathbb{T}^3} |h_{0}^{*}-h_{0}|\leq \|h_{0}^{*}\|_{\infty} \sqrt{ 2\Lambda_{0}\int_{\mathbb{T}^3}h_0}
$$
Then we have that
$$
\int_{\mathbb{T}^3}h_0\leq \int_{\mathbb{T}^3} |h_{0}^{*}-h_{0}| +\int_{\mathbb{T}^3} h_{0}^{*} \leq \|h_{0}^{*}\|_{\infty} \sqrt{ 2\Lambda_{0}\int_{\mathbb{T}^3}h_0} + \int_{\mathbb{T}^3} h_{0}^{*}
$$
$$
\leq \frac{1}{2}\int_{\mathbb{T}^3}h_0 + \|h_{0}^{*}\|_{\infty}^2\Lambda_{0} + \int_{\mathbb{T}^3} h_{0}^{*}
$$
So we have the estimate
$$
\int_{\mathbb{T}^3}h_0\leq 2 \left(\|h_{0}^{*}\|_{\infty}^2\Lambda_{0} + \int_{\mathbb{T}^3} h_{0}^{*}\right)
$$
Combining the above results, we can find a constant $C$ which depends only on $(h^{*},B^{*},D^{*},P^{*})$, $\Lambda(h_0,\widetilde{U}_0)$ and $T$, such that the estimate \eqref{eq:dd-sta-est1} is satisfied. \eqref{eq:dd-sta-est2},\eqref{eq:dd-sta-est3} are also easy to prove, since we have
\begin{equation*}
\begin{array}{r@{}l}
\|B(t)-B^{*}(t)\|_{TV} & \displaystyle{\leq \|B(t)-h(t)b^{*}(t)\|_{TV} + \|B^{*}(t)-h(t)b^{*}(t)\|_{TV}}\\
& \displaystyle{ \leq \|B(t)-h(t)b^{*}(t)\|_{TV} + \|B^{*}(t)\|_{\infty}\left\|\mathcal{L}-h(t){h^*}^{-1}(t)\right\|_{TV} }\\
& \displaystyle{ \leq (1+\|B^{*}\|_{\infty})\|\widetilde{U}(t)\|_{TV}
}
\end{array}
\end{equation*}
Similarly, we have
\begin{equation*}
\begin{array}{r@{}l}
\|D-D^{*}\|_{TV^{*}} & \displaystyle{ \leq \|D-hd^{*}\|_{TV^{*}} + \|D^{*}\|_{\infty}\left\|\mathcal{L}-h{h^{*}}^{-1}\right\|_{TV^{*}} }\\
& \displaystyle{ \leq (1+\|D^{*}\|_{\infty})\|\widetilde{W}\|_{TV^{*},}
}
\end{array}
\end{equation*}
$$\|h(t)-h^{*}(t)\|_{TV}\leq \|h^{*}\|_{\infty}\|\widetilde{U}(t)\|_{TV},\;\;\|P-P^{*}\|_{TV^{*}}\leq (1+\|P^{*}\|_{\infty})\|\widetilde{W}\|_{TV^{*}}$$
Then by \eqref{eq:dd-sta-est1}, we can quickly get our desired result.

\end{proof}

The above proposition gives us an estimate of the distance of two different dissipative solution as time evolves. As a direct consequence, we immediately get the weak-strong uniqueness for the dissipative solutions.

\begin{proposition}\label{prop:dd-weak-strong}
Suppose that $(h>0,B,D,P)$ is a classical (at least $C^1$) solution to (DMHD) \eqref{eq:dd-h}-\eqref{eq:dd-div} with initial value $(h,B)|_{t=0}=(h_{0},B_{0})$, then it is the unique dissipative solution to (DMHD) with the same initial value.

\end{proposition}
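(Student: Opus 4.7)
The plan is to deduce weak-strong uniqueness as an immediate corollary of the stability estimate in \propref{prop:dd-stability}. The point is that if two dissipative solutions share their initial data, then the controlling quantity $\Lambda(h_0,\widetilde{U}_0)$ vanishes identically, and the stability bounds propagate this equality to all later times.

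First I would note that the classical $C^1$ solution $(h,B,D,P)$ of the proposition is itself a dissipative solution: \lmref{lm:dd-entropy} gives the identity \eqref{eq:dd-rel-1}, which (together with positive-semidefiniteness of $Q_r(w^*)$ for $r\geq r_0$ and the vanishing of $\mathrm{L}(w^*)$ when the reference is itself a strong solution) yields the required inequality \eqref{eq:dddf-3} from \defref{def:dddf1}. Thus the classical solution actually belongs to the class of dissipative solutions, and uniqueness in that class is a meaningful question.

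Next, let $(\tilde h,\tilde B,\tilde D,\tilde P)$ be any dissipative solution with the same initial data $(h_0,B_0)$. I would apply \propref{prop:dd-stability} with the classical $(h,B,D,P)$ playing the role of the smooth reference $(h^*,B^*,D^*,P^*)$ and $(\tilde h,\tilde B,\tilde D,\tilde P)$ playing the role of the general dissipative solution. Evaluating $\widetilde{U}$ at $t=0$ gives
\[
\widetilde{U}_0 \;=\; \bigl(\mathcal{L}-\tilde h_0\, h_0^{-1},\;\tilde B_0-\tilde h_0\, b_0\bigr),\qquad b_0 \,=\, B_0/h_0.
\]
Because $\tilde h_0=h_0$ as measures and $h_0$ is a strictly positive $C^1$ density, the first component collapses to the zero measure; because $\tilde B_0=B_0=h_0 b_0$, so does the second. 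The characterization \eqref{eq:lam1} then yields $\Lambda(h_0,\widetilde{U}_0)=0$.

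Plugging this zero into \eqref{eq:dd-sta-est2}--\eqref{eq:dd-sta-est3} forces
\[
\|\tilde h(t)-h(t)\|_{TV} \;=\; \|\tilde B(t)-B(t)\|_{TV} \;=\; 0,\qquad \|\tilde D-D\|_{TV^*} \;=\; \|\tilde P-P\|_{TV^*} \;=\; 0,
\]
for every $t\in[0,T]$, so the two solutions coincide as measures on $[0,T]\times\mathbb{T}^3$. Since the analytic work has already been carried out in \propref{prop:dd-stability}, there is essentially no obstacle here; the only point requiring care is the measure-theoretic interpretation of expressions such as $\tilde h_0\, h_0^{-1}$, which is unambiguous because $h_0$ is a strictly positive smooth function and hence multiplication/division by it is well-defined on finite measures.
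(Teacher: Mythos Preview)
Your proof is correct and follows essentially the same route as the paper: use the classical solution as the reference in the stability estimate of \propref{prop:dd-stability}, observe that equal initial data force $\widetilde{U}_0=0$ and hence $\Lambda(h_0,\widetilde{U}_0)=0$, and conclude from \eqref{eq:dd-sta-est2}--\eqref{eq:dd-sta-est3}. Your additional remark that the classical solution is itself a dissipative solution (via \lmref{lm:dd-entropy}) is a welcome clarification that the paper handles separately in the consistency subsection.
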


\begin{proof}
The proof is very simple. Suppose there is another dissipative solution $(h',B',D',P')$. Since $\widetilde{U}_0=0$, the estimates \eqref{eq:dd-sta-est2},\eqref{eq:dd-sta-est3} in the previous proposition just give us the uniqueness.

\end{proof}

%%%%%%%%%%%%%%%%%%%%%%%%%%%%%%%%%%%%%%%%%%%%

\subsection{Weak compactness}

From the previous parts, we know that, if we have a smooth dissipative solution, then it should be a strong solution, and, therefore, it should be the unique dissipative solution with the same initial data. However, in general, dissipative solutions are not usually that regular. A natural question is that, what happens to the dissipative solutions that are not smooth? Are they unique? If not, what can we conclude for the set of dissipative solutions? In this part, we will show that the dissipative solutions satisfy the ``weak compactness'' property (i.e. any sequence of dissipative solutions has accumulations
points, in a suitable weak sense, and each of them is still a dissipative solution). We summarize our result in the following theorem:

\begin{theorem}\label{thm:dd-weak-comp}
For any initial data $B_{0}\in C(\mathbb{T}^3,\mathbb{R}^3)',\;h_{0}\in C(\mathbb{T}^3,\mathbb{R})'$, satisfying that $\nabla\cdot B_{0}=0$ in the sense of distributions and $\Lambda(h_{0},U_{0})<\infty$, let $\mathcal{A}$ be the set of all dissipative solutions $(h,B,D,P)$ to (DMHD) \eqref{eq:dd-h}-\eqref{eq:dd-div} with initial data $(h_0,B_0)$. Then $\mathcal{A}$ is a non-empty convex compact set in the space $C([0,T],C(\mathbb{T}^3,\mathbb{R}\times\mathbb{R}^3)'_{w^*})\times C([0,T]\times\mathbb{T}^3,\mathbb{R}^3\times\mathbb{R}^3)'_{w^*}$.
\end{theorem}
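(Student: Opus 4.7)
The three claims are of very different depth: convexity is essentially structural, compactness is an application of uniform a~priori bounds plus lower semicontinuity of $\Lambda$ and $\widetilde\Lambda$, and non-emptiness is the analytic heart of the paper, which I would postpone to Sections~\ref{sec:6}--\ref{sec:7}. I would therefore break the proof into three independent parts and treat them in that order.

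\textbf{Convexity.} Fix two dissipative solutions $(h_i,B_i,D_i,P_i)$, $i=1,2$, and $\lambda_1,\lambda_2\ge0$ with $\lambda_1+\lambda_2=1$, and set $(h,B,D,P)=\lambda_1(h_1,B_1,D_1,P_1)+\lambda_2(h_2,B_2,D_2,P_2)$. Clauses (i)--(iii) of Definition~\ref{def:dddf1} hold trivially because the initial condition, the $C^{0,1/2}$ time bound, the distributional identity~\eqref{eq:dddf-1}, and the divergence condition~\eqref{eq:dddf-2} are all linear or convex in $(h,B,D,P)$. For clause (iv) the key point is that, once the test fields $w^*=({h^*}^{-1},b^*,d^*,v^*)$ are fixed, $\widetilde U$ and $\widetilde W$ depend affinely on $(h,B,D,P)$, so the convex combination of $(\widetilde U_i,\widetilde W_i)$ equals the $(\widetilde U,\widetilde W)$ built from the combined solution; since $\Lambda$ and $\widetilde\Lambda(\,\cdot\,,\,\cdot\,,Q;0,t)$ are defined as suprema of linear functionals they are convex in $(\rho,U)$ and $(\rho,W)$, and $R(t)$ is linear in $\widetilde W$, so inequality~\eqref{eq:dddf-3} passes to the convex combination by Jensen's inequality.

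\textbf{Compactness.} Let $\{(h_n,B_n,D_n,P_n)\}\subset\mathcal{A}$ be a sequence. My first step is to obtain uniform a~priori bounds by plugging the trivial test fields $w^*=(1,0,0,0)$ into~\eqref{eq:dddf-3}. With this choice a direct check gives $\mathrm{L}(w^*)\equiv 0$, $R(t)\equiv 0$, and $Q_r(w^*)=\mathrm{diag}(rI_4,2I_3,2I_3)\ge cI_{10}$ for $r$ large; hence~\eqref{eq:dddf-3} yields a uniform bound on $\Lambda(h_n(t),\widetilde U_n(t))$ in $t$ and on $\widetilde\Lambda(h_n,\widetilde W_n,\cdot;0,T)$. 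Arguing exactly as in the proof of Proposition~\ref{prop:dd-stability} (Cauchy--Schwarz against $\int h_n$) this translates into uniform $TV$ bounds for $\widetilde U_n(t)$ on $\mathbb{T}^3$ and for $\widetilde W_n$ on $[0,T]\times\mathbb{T}^3$, and hence for $(h_n,B_n)(t)$ and $(D_n,P_n)$ themselves. Combined with the $C^{0,1/2}$ equicontinuity guaranteed by clause (ii), an Arzel\`a--Ascoli argument in the metrizable space $C(\mathbb{T}^3,\mathbb{R}^4)'_{w^*}$ (metrized by~\eqref{eq:ddpf-inapm}) gives a subsequence along which $(h_n,B_n)$ converges in $C([0,T],C(\mathbb{T}^3,\mathbb{R}^4)'_{w^*})$, while Banach--Alaoglu extracts a weak-$*$ limit of $(D_n,P_n)$ in $C([0,T]\times\mathbb{T}^3,\mathbb{R}^6)'$. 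To show the limit $(h,B,D,P)$ is again a dissipative solution, I would pass each item of Definition~\ref{def:dddf1} to the limit: clauses (i)--(iii) are linear and pass trivially by weak-$*$ convergence, while in~\eqref{eq:dddf-3} both $\Lambda$ and $\widetilde\Lambda$ are weak-$*$ lower semicontinuous (being suprema of continuous linear functionals under fixed pointwise constraints on $(a,A)$) and $R_n(t)\to R(t)$ by weak-$*$ convergence of $\widetilde W_n$ against the fixed continuous function $e^{-rs}\mathrm{L}(w^*)$; taking $\liminf_n$ of the left-hand side of~\eqref{eq:dddf-3} and using that the right-hand side is a constant in $n$ yields the inequality for the limit, for every admissible $w^*$ and every $r\ge r_0(w^*)$.

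\textbf{Non-emptiness and main obstacle.} Existence of a dissipative solution for arbitrary $(h_0,B_0)$ with $\Lambda(h_0,U_0)<\infty$ is the only genuinely hard part; I would defer it to Sections~\ref{sec:6}--\ref{sec:7}, where the natural strategy (in the spirit of \cite{AGS} and \cite{Br-CMP} cited in the introduction) is to build a time-discrete De~Giorgi / minimizing-movement scheme adapted to the degenerate parabolic structure of (DMHD), then pass to the limit in the step size using precisely the weak-$*$ compactness and lower semicontinuity developed above. The main obstacle is that the constraint~\eqref{eq:dd-con} couples $D,P$ nonlinearly to $(h,B)$ through $h^{-1}$ and $h^{-1}B\otimes B$, so constructing approximate solutions whose limit satisfies~\eqref{eq:dddf-3} for \emph{every} smooth $w^*$ and not merely for a restricted class (as would be the case for the energy identity alone) is the delicate technical point that justifies the lengthy treatment announced for Sections~\ref{sec:6}--\ref{sec:7}.
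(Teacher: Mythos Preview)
Your treatment of convexity and compactness is correct and matches the paper's approach essentially line for line: the paper also observes that \eqref{eq:dddf-1}--\eqref{eq:dddf-2} are linear and $\Lambda,\widetilde\Lambda$ convex for the convexity part, and for compactness it invokes \propref{prop:dd-stability} with the trivial strong solution $h^*\equiv1$, $B^*=D^*=P^*=0$ to get exactly the uniform $TV$ bounds you derive by hand from the choice $w^*=(1,0,0,0)$, then concludes by Arzel\`a--Ascoli, Banach--Alaoglu, and weak-$*$ stability of the defining inequalities.

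The one point where you diverge from the paper is your speculation about the existence proof. The paper does \emph{not} use a De~Giorgi minimizing-movement scheme; instead it follows Feireisl's Faedo--Galerkin strategy for compressible Navier--Stokes \cite{f1,f2}. Concretely, \secref{sec:6} introduces an $\varepsilon$-regularized approximate system \eqref{eq:ddap-1}--\eqref{eq:ddap-3} in which the algebraic constraints \eqref{eq:dd-con} are replaced by evolution equations for $hd$ and $hv$ with an added high-order dissipation $(-\triangle)^l$, solves this system on finite-dimensional trigonometric subspaces $X_N$ by a Schauder fixed-point argument, and then \secref{sec:7} passes to the limit $N\to\infty$, $\varepsilon\to0$ simultaneously, mollifying the initial data along the way. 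The key technical point---getting \eqref{eq:dddf-3} in the limit for \emph{all} $C^1$ test fields $w^*$---is handled by first restricting $v^*,d^*$ to $X_N$, performing the relative-entropy computation at the Galerkin level, and only at the end approximating general $C^1$ test fields by trigonometric ones. This is quite different in flavor from a variational time-stepping scheme, so if you intend to carry out the existence proof yourself you should adjust your plan accordingly.
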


\begin{proof}%[Proof of \thmref{thm:dd-weak-comp}]

The non-emptiness of $\mathcal{A}$ refers just to the existence of the dissipative solutions. The proof is a little lengthy, we leave the existence proof in \secref{sec:6} and \secref{sec:7}. Here, let's prove the convexity and compactness. The convexity of $\mathcal{A}$ is quite easy. As we can see in \defref{def:dddf1}, \eqref{eq:dddf-1},\eqref{eq:dddf-2} are linear equations. So it is always satisfied under any convex combination of dissipative solutions. Since the functional $\Lambda,\widetilde{\Lambda}$ are convex, so \eqref{eq:dddf-3} is also satisfied. So we know that the set $\mathcal{A}$ is convex. Now let's show the compactness. Since $h^*\equiv 1$, $B^*=D^*=P^*=0$ is a trivial solution, then for any family of dissipative solutions $(h_n,B_n,D_n,P_n)$ with initial data $(h_0,B_0)$, by \propref{prop:dd-stability}, there exist a constant $C$ that depends only on $(h_0,B_0)$ and $T$, such that
$$\|h_n(t)-1\|_{TV},\;\;\|B_n(t)\|_{TV},\;\;\|D_n\|_{TV^{*}},\;\;\|P_n\|_{TV^{*}}\leq C.$$
Since $(h_n,B_n)$ are uniformly bounded in $C^{0,\frac{1}{2}}([0,T],C(\mathbb{T}^3,\mathbb{R}^4)'_{w^*})$, then up to a subsequence, $(h_n,B_n)$ converge to some function $(h,B)$ in $C([0,T],C(\mathbb{T}^3,\mathbb{R}^4)'_{w^*})$. Also, since $(D_n,P_n)$ are uniformly bounded in $C([0,T]\times\mathbb{T}^3,\mathbb{R}^3\times\mathbb{R}^3)'$, then up to a subsequence, $(D_n,P_n)$ converge weakly-$*$ to some $(D,P)$ in $C([0,T]\times\mathbb{T}^3,\mathbb{R}^3\times\mathbb{R}^3)'$. Now we only need to prove that $(h,B,D,P)$ is a dissipative solution. This is easy since  \eqref{eq:dddf-1},\eqref{eq:dddf-2} and \eqref{eq:dddf-3} are weakly stable.

\end{proof}

%%%%%%%%%%%%%%%%%%%%%%%%%%%%%%%%%%%%%%%%%%%%%
%%%%%%%%%%%%%%%%%%%%%%%%%%%%%%%%%%%%%%%%%%%%%
%%%%%%%%%%%%%%%%%%%%%%%%%%%%%%%%%%%%%%%%%%%%%

\section{Comparison with smooth solutions of the ABI equations}\label{sec:5}

As it has been shown in \secref{sec:2}, we can get our (DMHD) out of the augmented BI equations \eqref{eq:ABI-1},\eqref{eq:ABI-2} through the quadratic change of the time variable $\theta\rightarrow t^2/2$. Now a natural question is that, since the (DMHD) can be seen as an approximation of the ABI equations, how about the solutions of these two systems of equations? Are they close to each other when the initial data are the same? With our concept of the dissipative solution, it is possible to give an answer.

In \cite{Br-W}, it is shown that ABI equations can be rewritten as a symmetric hyperbolic system of conservation laws. So smooth solutions exist at least in a short period of time for smooth initial data, see \cite{Dafer}. Now, for any smooth function $h_0>0$, $B_0$, $\nabla\cdot B_0=0$, there exist a time interval $[0,t_{0}]$, such that there exists a smooth solution $(h',B',D',P')$ to the augmented BI system with initial value $(h_{0},B_{0},0,0)$. We will compare the smooth solution $(h',B',D',P')$ with the dissipative solution $(h,B,D,P)$ to the Darcy MHD on $[0,T]$, $T\geq t_{0}^2/2$, with the same initial value $(h_0,B_0)$. Our estimates are in the following proposition.

\begin{proposition}
Suppose $(h',B',D',P')$ is a smooth solution to the augmented BI equations \eqref{eq:ABI-1},\eqref{eq:ABI-2} on $[0,t_{0}]$ with smooth initial data $(h_{0},B_{0},0,0)$. $(h,B,D,P)$ is a dissipative solution to (DMHD) \eqref{eq:dd-h}-\eqref{eq:dd-div} on $[0,T]$, $T\geq t_{0}^2/2$, with the same initial data $(h_{0},B_{0})$. Then there exists a constant $C$ that depends only on $(h',B',D',P')$ and $t_0$, such that for any $t\in[0,t_0]$, we have
\begin{equation}\label{eq:dd-abi-1}
\|h'(t)-h(t^2/2)\|_{TV},\;\;\;\|B'(t)-B(t^2/2)\|_{TV}\leq C t^3
\end{equation}
\begin{equation}\label{eq:dd-abi-2}
\big|D'(s)-sD(s^2/2)\big|([0,t]\times\mathbb{T}^3),\;\;\;\big|P'(s)-sP(s^2/2)\big|([0,t]\times\mathbb{T}^3)\leq C t^4
\end{equation}
Here $|\cdot|$ represents the variation of the vector-valued measures. $sD(s^2/2)$, $sP(s^2/2)$ denote the vector-valued Borel measures on $[0,t_0]\times\mathbb{T}^3$ defined in the way such that, for all $\varphi\in C([0,t_0]\times\mathbb{T}^3,\mathbb{R}^3)$, we have
$$\int_0^{t_0}\int_{\mathbb{T}^3} \varphi(s)\cdot sD(s^2/2)= \int_0^{t_0^2/2}\int_{\mathbb{T}^3}\varphi(\sqrt{2s})\cdot D(s)$$
$$\int_0^{t_0}\int_{\mathbb{T}^3} \varphi(s)\cdot sP(s^2/2)= \int_0^{t_0^2/2}\int_{\mathbb{T}^3}\varphi(\sqrt{2s})\cdot P(s)$$
\end{proposition}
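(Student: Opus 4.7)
My plan is to apply the dissipative inequality \eqref{eq:dddf-3} with the appropriately rescaled smooth ABI solution as test function. The first step is to check that this choice is admissible, i.e.\ $C^1$ at $\theta=0$. The key observation is that since the ABI initial data $(h_0,B_0,0,0)$ has $D_0=P_0=0$, the system enjoys a time-parity symmetry: by uniqueness of smooth ABI solutions, $(h',B',D',P')(t)=(h',B',-D',-P')(-t)$, so $h',B'$ are even in $t$ while $D',P'$ are odd. Consequently $D'(t)/t$ and $P'(t)/t$ extend as smooth functions of $t^2=2\theta$, and the test functions
\begin{equation*}
h^*(\theta)=h'(\sqrt{2\theta}),\ \ b^*(\theta)=\frac{B'(\sqrt{2\theta})}{h'(\sqrt{2\theta})},\ \ d^*(\theta)=\frac{D'(\sqrt{2\theta})}{\sqrt{2\theta}\,h'(\sqrt{2\theta})},\ \ v^*(\theta)=\frac{P'(\sqrt{2\theta})}{\sqrt{2\theta}\,h'(\sqrt{2\theta})}
\end{equation*}
are $C^1$ on $[0,t_0^2/2]\times\mathbb{T}^3$ with $h^*>0$, hence admissible in \defref{def:dddf1}.

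Next I would verify that the residual $\mathrm{L}(w^*)$ is small. The chain rule $\partial_\theta=t^{-1}\partial_t$ reduces $\mathrm{L}_h(w^*)$ and $\mathrm{L}_B(w^*)$ to $t^{-1}$ times the non-conservative ABI equations \eqref{eq:abi-non1}--\eqref{eq:abi-non2} for $\tau'$ and $b'$, so these residuals vanish identically. For the algebraic components I would use the rescaled augmented BI system derived in \secref{sec:2}: with $D^*=h^*d^*,\,P^*=h^*v^*$, the exact identity
\begin{equation*}
D^*+2\theta\bigl[\partial_\theta D^*+\nabla\times\bigl((D^*\times P^*)/h^*\bigr)\bigr]=\nabla\times(B^*/h^*)
\end{equation*}
holds, and dividing by $h^*$ gives $\mathrm{L}_D(w^*)=-(2\theta/h^*)[\,\cdots\,]$ with bracket bounded on $[0,t_0^2/2]\times\mathbb{T}^3$, whence $|\mathrm{L}_D|\leq C\theta$; the same argument applies to $\mathrm{L}_P$. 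Hence $|\mathrm{L}(w^*)|\leq C\theta$ uniformly.

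Now apply \eqref{eq:dddf-3} with $r$ chosen so that $Q_r(w^*)\geq I_{10}$. Since $h(0)=h_0=h^*(0)$ and $B_0=h_0 b^*(0)$, the initial relative entropy vanishes. I bound $|R(\theta)|$ by the pointwise Cauchy--Schwarz--AM/GM inequality in the $Q_r$-metric, $|w\cdot\mathrm{L}|\leq\tfrac14 w^{\rm T}Q_r w+\mathrm{L}^{\rm T}Q_r^{-1}\mathrm{L}$, and then use $Q_r^{-1}\leq I_{10}$ to obtain
\begin{equation*}
|R(\theta)|\leq\tfrac12\widetilde{\Lambda}(h,\widetilde{W},e^{-rs}Q_r;0,\theta)+\int_0^\theta\!\!\int_{\mathbb{T}^3}e^{-rs}|\mathrm{L}|^2 h.
\end{equation*}
Absorbing the $\tfrac12\widetilde{\Lambda}$ on the left of \eqref{eq:dddf-3}, and using $|\mathrm{L}|^2\leq Cs^2$ together with the uniform bound $\|h(s)\|_{TV}\leq C$ (inherited from \propref{prop:dd-stability} applied with the trivial solution $h^*\equiv 1,\,B^*=D^*=P^*=0$), this yields
$\Lambda(h(\theta),\widetilde{U}(\theta))\leq C\theta^3$ and $\widetilde{\Lambda}(h,\widetilde{W},I_{10};0,\theta)\leq C\theta^3$.

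These bounds pass to TV norms exactly as in \propref{prop:dd-stability}. Cauchy--Schwarz gives $\|\widetilde{U}(\theta)\|_{TV}^2\leq 2\Lambda(h(\theta),\widetilde{U}(\theta))\|h(\theta)\|_{TV}\leq C\theta^3$, hence $\|\widetilde{U}(\theta)\|_{TV}\leq C\theta^{3/2}\leq C't^3$, and the triangle-inequality bounds $\|h'(t)-h(\theta)\|_{TV},\,\|B'(t)-B(\theta)\|_{TV}\leq C\|\widetilde{U}(\theta)\|_{TV}$ then give \eqref{eq:dd-abi-1}. Similarly $\|\widetilde{W}\|_{TV^*[0,\theta]}^2\leq 2\widetilde{\Lambda}\cdot\int_0^\theta\|h\|_{TV}\,ds\leq C\theta^4$ gives $\|\widetilde{W}\|_{TV^*[0,\theta]}\leq C\theta^2\leq C'\!t^4$; extracting the $D$- and $P$-components, comparing $D^*=h^*d^*$ and $P^*=h^*v^*$ to $D,P$ by the same triangle-inequality trick, and then invoking the change-of-variables identity in the proposition statement, one recovers \eqref{eq:dd-abi-2}. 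The main obstacle is the first step: without the time-parity symmetry, $d^*$ and $v^*$ would carry $\sqrt{\theta}$-corrections and fail the $C^1$ hypothesis of \defref{def:dddf1}; once that symmetry is noted, $|\mathrm{L}|=O(\theta)$ encodes precisely the fact that DMHD is obtained from the rescaled ABI by dropping $O(\theta)$ terms, and the rest is standard relative-entropy machinery.
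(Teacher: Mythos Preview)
Your proof is correct and follows essentially the same route as the paper: rescale the smooth ABI solution into test functions $(h^*,b^*,d^*,v^*)$, verify $\mathrm{L}_h=\mathrm{L}_B=0$ while $\mathrm{L}_D,\mathrm{L}_P=O(\theta)$, and plug into the dissipative inequality with $\widetilde{U}(0)=0$. Two minor differences are worth noting: your time-parity argument ($h',B'$ even, $D',P'$ odd) is a cleaner justification of the $C^1$ regularity of $d^*,v^*$ at $\theta=0$ than the paper's bare claim that $\partial_t^2 d'(0)=\partial_t^2 v'(0)=0$; and where the paper bounds $|R(\theta)|\le 2\theta M\,|\widetilde{W}|([0,\theta]\times\mathbb{T}^3)$ and solves the resulting quadratic inequality for $|\widetilde{W}|$, you instead use Young's inequality to absorb $\tfrac12\widetilde{\Lambda}$ and bound the remainder by $\int_0^\theta\!\int|\mathrm{L}|^2h\le C\theta^3$ --- both routes land on $\|\widetilde{U}(\theta)\|_{TV}^2\le C\theta^3$ and $|\widetilde{W}|([0,\theta]\times\mathbb{T}^3)\le C\theta^2$.
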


\begin{proof}
First, since $(h',B',D',P')$ is a smooth solution to the augmented BI equations, then the non-conservative variables $$(\tau',b',d',v')=(1/h',B'/h',D'/h',P'/h')$$
should satisfy the following equations
$$
\partial_t b' +(v'\cdot\nabla)b'=(b'\cdot\nabla)v'-\tau'\nabla\times d',\;\;\partial_t d' +(v'\cdot\nabla)d'=(d'\cdot\nabla)v'+\tau'\nabla\times b',
$$
$$
\partial_t \tau' +(v'\cdot\nabla)\tau'=\tau'\nabla\cdot v',\;\;\partial_t v' +(v'\cdot\nabla)v'=(b'\cdot\nabla)b'+(d'\cdot\nabla)d'+\tau'\nabla\tau',
$$
Now let's take our test function $h^*,b^*,d^*,v^*$ defined as following
\begin{equation}\label{eq:cp-1}
\begin{array}{ll}
\displaystyle{h^{*}(\theta,x)=h'(\sqrt{2\theta},x)} & \displaystyle{b^{*}(\theta,x)=b'(\sqrt{2\theta},x)}\\
\displaystyle{d^{*}(\theta,x)=\frac{d'(\sqrt{2\theta},x)}{\sqrt{2\theta}}} & \displaystyle{v^{*}(\theta,x)=\frac{v'(\sqrt{2\theta},x)}{\sqrt{2\theta}}}
\end{array}
\end{equation}
We should notice that $d^*,v^*$ is well defined and continuous with value $\partial_{t}d'(0),\partial_{t}v'(0)$ at time $\theta=0$. Moreover, it is easy to verify that $\partial_{t}^{2}d'(0)=\partial_{t}^{2}v'(0)=0$, so we know that $h^{*},b^{*},d^{*},v^{*}$ are $C^{1}$ functions with $\partial_{\theta}d^{*}(0)=\frac{1}{3}\partial_{t}^{3}d'(0),\partial_{\theta}v^{*}(0)=\frac{1}{3}\partial_{t}^{3}v'(0)$.
Now, let's do the change of time $\theta=t^2/2$, \eqref{eq:cp-1} means
$$
\tau'(t,x)={h^{*}}^{-1}(\theta,x),\;b'(t,x)={b^{*}}(\theta,x),\;d'(t,x)=td^*(\theta,x),\;v'(t,x)=tv^*(\theta,x)
$$
Then our test function should satisfy the following equations,
$$
\partial_\theta ({h^*}^{-1}) +(v^*\cdot\nabla){h^*}^{-1}={h^*}^{-1}\nabla\cdot v^*
$$
$$
\partial_\theta b^* +(v^*\cdot\nabla)b^*=(b^*\cdot\nabla)v^*-{h^*}^{-1}\nabla\times d^*
$$
$$
d^*-{h^*}^{-1}\nabla\times b^*=-2\theta\big[\partial_\theta d^*+(v^*\cdot\nabla)d^*-(d^*\cdot\nabla)v^*\big],
$$
$$
v^*- (b^*\cdot\nabla)b^*-{h^*}^{-1}\nabla({h^*}^{-1}) =-2\theta\big[\partial_\theta v^*+(v^*\cdot\nabla)v^* -(d^*\cdot\nabla)d^*\big],
$$
So we have that
$$\mathrm{L}_{h}(w^*)=\mathrm{L}_{B}(w^*)=0,\;\;\mathrm{L}_{D}(w^*)=-2\theta \psi_d^*,\;\;\mathrm{L}_{P}(w^*)=-2\theta \psi_v^*,$$
where $\psi_d^*,\psi_v^*$ are continuous functions with the following expressions
$$\psi_d^*=\partial_\theta d^*+(v^*\cdot\nabla)d^*-(d^*\cdot\nabla)v^*,$$
$$\psi_v^*=\partial_\theta v^*+(v^*\cdot\nabla)v^* -(d^*\cdot\nabla)d^*.$$
Now, for the dissipative solution $(h,B,D,P)$ to (DMHD), we denote as usual,
$$\widetilde{U}=\big(\mathcal{L}-h{h^*}^{-1},B-hb^*\big),\;\;\; \widetilde{W}=\big(\widetilde{U},D-hd^*,P-hv^*\big)$$
Since the initial value are the same, we have $\widetilde{U}(0)=0$. Now we follow the definition of dissipative solution, there exists a constant $r>0$ such that $Q_{r}(w^*)\geq I_{10}$ for all $(\theta,x)\in [0,t^2_0/2]\times\mathbb{T}^3$. By \eqref{eq:dddf-3}, we have, for $\theta\in[0,t^2_0/2]$,
\begin{equation}\label{eq:comp1}
e^{-r\theta}\Lambda(h(\theta),\widetilde{U}(\theta)) + \widetilde{\Lambda}(h,\widetilde{W},e^{-r\theta'}Q_r(w^*);0,\theta) + R(\theta)\leq 0.
\end{equation}
where
$$R(\theta)= -2\theta\int_0^\theta\int_{\mathbb{T}^3}\Big[\psi_d^*\cdot(D-hd^*) + \psi_v^*\cdot(P-hv^*)\Big].$$
By Cauchy-Schwarz inequality, we have
$$\|\widetilde{U}(\theta)\|_{TV}^2\leq2\Lambda(h(\theta),\widetilde{U}(\theta))\int_{\mathbb{T}^3}h(\theta)=2\Lambda(h(\theta),\widetilde{U}(\theta))\int_{\mathbb{T}^3}h_0$$
$$\Big(\big|\widetilde{W}\big|([0,\theta]\times\mathbb{T}^3)\Big)^{2}\leq
2\theta e^{r\theta}\widetilde{\Lambda}(h,\widetilde{W},e^{-r\theta'}Q_r(w^*);0,\theta)\int_{\mathbb{T}^3}h_0$$
Now let $M=\left\|\psi_d^*\right\|_{\infty}+\left\|\psi_v^*\right\|_{\infty}$, then we have
$$
|R(\theta)| \leq 2\theta \big|\widetilde{W}\big|([0,\theta]\times\mathbb{T}^3)M
$$
Therefore, \eqref{eq:comp1} implies,
\begin{equation}
\theta\|\widetilde{U}(\theta)\|_{TV}^2 + \Big(\big|\widetilde{W}\big|([0,\theta]\times\mathbb{T}^3)\Big)^{2}\leq C\theta^2\big|\widetilde{W}\big|([0,\theta]\times\mathbb{T}^3)
\end{equation}
where $C$ is a constant that depends only on $M$, $h_0$, $r$ and $t_0$. Then we have that
$$\big|\widetilde{W}\big|([0,\theta]\times\mathbb{T}^3)\leq C\theta^2,\;\;\;\|\widetilde{U}(\theta)\|_{TV}^2\leq C^2\theta^3.$$
Since
$$
\|h(t^2/2)-h'(t)\|_{TV}=\|h(\theta)-h^*(\theta)\|_{TV}\leq \|h^*\|_{\infty}\|\mathcal{L}-h{h^*}^{-1}\|_{TV}
$$
$$
\leq \|h^*\|_{\infty}\|\widetilde{U}(\theta)\|_{TV} \leq 2^{-\frac{3}{2}}\|h^*\|_{\infty}C t^3
$$
$$
\|B(t^2/2)-B'(t)\|_{TV}\leq \|B(\theta)-h(\theta)b^{*}(\theta)\|_{TV} + \|b^*(\theta)(h(\theta)-h^*(\theta))\|_{TV}
$$
$$
\leq (1+\|h^*b^*\|_{\infty})\|\widetilde{U}(\theta)\|_{TV}\leq 2^{-\frac{3}{2}}(1+\|h^*b^*\|_{\infty})C t^3
$$
so we get \eqref{eq:dd-abi-1}. Now, for $sD(s^2/2)$, we have
$$\big|D'(s)-sD(s^2/2)\big|([0,t]\times\mathbb{T}^3)=\big|h^*(\theta)d^*(\theta)-D(\theta)\big|([0,\theta]\times\mathbb{T}^3)$$
$$\leq (1+\|h^*d^*\|_{\infty})\big|\widetilde{W}\big|([0,\theta]\times\mathbb{T}^3)\leq 2^{-2}(1+\|h^*d^*\|_{\infty})C t^4$$
similarly,
$$\big|P'(s)-sP(s^2/2)\big|([0,t]\times\mathbb{T}^3)\leq 2^{-2}(1+\|h^*v^*\|_{\infty})C t^4$$
so we get \eqref{eq:dd-abi-2}.

\end{proof}

%%%%%%%%%%%%%%%%%%%%%%%%%%%%%%%%%%%%%%%%
%%%%%%%%%%%%%%%%%%%%%%%%%%%%%%%%%%%%%%%%
%%%%%%%%%%%%%%%%%%%%%%%%%%%%%%%%%%%%%%%%

\section{Faedo-Galerkin approximation}\label{sec:6}

In the following two sections, we will mainly focus on the existence theory of the dissipative solutions. In this section, we consider an approximate system of (DMHD). We want to get a dissipative solution of (DMHD) by the approaching of solutions of the approximate system. In fact, we don't really need to solve the approximate system, we only need to find a sequence of approximate solutions on some finite dimensional spaces, which is quite similar to the Faedo-Galerkin method of E. Feireisl \cite{f1,f2,f3}. We consider the following approximate equations
\begin{equation}\label{eq:ddap-1}
\partial_{t}h +\nabla\cdot (hv)=0,
\end{equation}
\begin{equation}\label{eq:ddap-2}
\partial_{t}B + \nabla\times\left(B\times v + d\right)=0,\;\;\;\nabla\cdot B=0
\end{equation}
\begin{equation}\label{eq:ddap-4}
\varepsilon\Big[\partial_{t}(hd) + \nabla\cdot[h(d\otimes v- v\otimes d)]+(-\triangle)^ld\Big] + hd = \nabla\times b,
\end{equation}
\begin{equation}\label{eq:ddap-3}
\varepsilon\Big[\partial_{t}(hv) + \nabla\cdot (hv\otimes v)- (h d\cdot\nabla) d + (-\triangle)^lv\Big]  + hv = \nabla\cdot \left(\frac{B\otimes B}{h}\right) + \nabla\left(h^{-1}\right),
\end{equation}

Here, $0<\varepsilon<1$, we choose $l$ sufficiently big ($l\geq8$). The idea of using these equations as approximate system comes from the way we get the Darcy MHD from augmented BI. The time derivatives of $d,v$ here can ensure that the approximate solutions are continuous with respect to time. We introduce the high order derivatives here to get some regularities that will be useful in showing the existence. Very similar to the case of augmented BI, we have the following formula (the proof is quite straightforward, we leave it to interested readers)
\begin{equation}\label{eq:ddapin-1}
\frac{{\rm d}}{{\rm d}t}\int_{\mathbb{T}^3}\left[\frac{1+B^2}{2h} + \varepsilon \frac{h(v^2+d^2)}{2}\right] + \int_{\mathbb{T}^3} h(v^2+d^2) +\varepsilon\int_{\mathbb{T}^3}\big|\nabla^lv\big|^2 + \big|\nabla^ld\big|^2= 0
\end{equation}

%%%%%%%%%%%%%%%%%%%%%%%%%%%%%%%%%%%%%%%%%%

\subsection{Classical Solution for Fixed $(d,v)$}

Now let us consider the solution of \eqref{eq:ddap-1} and \eqref{eq:ddap-2} when $d,v$ are given smooth functions.

\begin{lemma}\label{lm:ddap1}
Suppose $h_0,B_0$ are smooth functions with $\nabla\cdot B_0=0,\;h_0>0$. Then for any integer $k\geq 1$ and given $d,v\in C([0,T],C^{k+1}(\mathbb{T}^3,\mathbb{R}^3))$, there exists a unique solution $h\in C^1([0,T],C^{k}(\mathbb{T}^3,\mathbb{R}^3))
,B\in C^1([0,T],C^k(\mathbb{T}^3,\mathbb{R}^3))$ to \eqref{eq:ddap-1} and \eqref{eq:ddap-2} with initial data $h_0,B_0$.
\end{lemma}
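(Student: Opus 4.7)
The system decouples: equation \eqref{eq:ddap-1} involves only $h$ and the given $v$, while \eqref{eq:ddap-2} is linear in $B$ with coefficients and source built from the given $v$ and $d$. The plan is to solve $h$ first by characteristics and then integrate the $B$ equation along the same flow; no nonlinearity links the two unknowns, so no fixed-point argument is needed.

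First I would introduce the flow $X(t,x)$ of $v$ defined by $\partial_t X = v(t, X)$, $X(0, x) = x$. By Cauchy--Lipschitz and the hypothesis $v \in C([0,T], C^{k+1})$, standard ODE dependence on initial data gives $X(t, \cdot)$ as a $C^{k+1}$ diffeomorphism of $\mathbb{T}^3$, continuous in $t$, with inverse of the same regularity. Along characteristics, \eqref{eq:ddap-1} collapses to the scalar linear ODE $\frac{d}{ds} (h \circ X) = -(\nabla\cdot v)(s, X)\,(h\circ X)$, with explicit solution
$$h(t, X(t,x)) = h_0(x)\,\exp\Bigl(-\int_0^t (\nabla\cdot v)(s, X(s,x))\,ds\Bigr).$$
Positivity of $h_0$ forces $h > 0$ throughout $[0,T] \times \mathbb{T}^3$; pulling back by $X^{-1}$ yields $h(t, \cdot) \in C^k(\mathbb{T}^3)$ continuously in $t$, and the PDE itself delivers the time derivative in $C([0,T], C^k)$.

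For $B$, the vector identity $\nabla \times (B \times v) = B(\nabla\cdot v) - v(\nabla\cdot B) + (v\cdot\nabla)B - (B\cdot\nabla)v$, together with preservation of $\nabla\cdot B = 0$ (obtained by applying $\nabla\cdot$ to \eqref{eq:ddap-2}, which gives $\partial_t(\nabla\cdot B) = 0$), rewrites the equation as the linear transport-with-source
$$\partial_t B + (v\cdot\nabla)B = (B\cdot\nabla)v - B(\nabla\cdot v) - \nabla\times d.$$
Along $X$ this becomes a parameter-dependent linear ODE system in $\mathbb{R}^3$ for $B \circ X$, with $C^k$ coefficients and $C^k$ source; Duhamel's formula yields a unique global solution on $[0,T]$, and pushing back gives $B \in C^1([0,T], C^k)$ with $\nabla\cdot B \equiv 0$. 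Uniqueness is immediate because any candidate solution must satisfy these ODEs along characteristics, where uniqueness holds pointwise.

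The only delicate point, which I would treat carefully but not belabour, is justifying the spatial regularity claim $\partial_t h, \partial_t B \in C^k$: a naive Eulerian bound from $\partial_t h = -v\cdot\nabla h - h\,\nabla\cdot v$ loses a derivative, since $\nabla h$ sits only in $C^{k-1}$. The gain is recovered by the Lagrangian viewpoint, where the material derivative $(\partial_t + v\cdot\nabla)$ applied to $h$ equals the manifestly $C^k$ quantity $-h\,\nabla\cdot v$, so the advective combination $v\cdot\nabla h$ is one derivative better than its ingredients suggest. The analogous gain for $B$ works in the same way. Everything else is routine chain-rule bookkeeping and linear ODE theory.
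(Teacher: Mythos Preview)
Your approach via characteristics along the flow of $v$ is exactly the paper's; the paper packages the $B$-solution through an auxiliary function $G$ satisfying a linear ODE along forward trajectories together with the same Jacobian factor $\exp\bigl(-\int_0^t(\nabla\cdot v)\,ds\bigr)$, which is just your Duhamel formula in different notation. The only ordering quibble is that you invoke $\nabla\cdot B=0$ to drop the $-v(\nabla\cdot B)$ term before $B$ has been constructed---cleaner is to solve the simplified transport equation first and then verify the divergence-free constraint a posteriori (it holds because $\nabla\cdot B$ then satisfies a homogeneous linear transport equation with zero data)---but this is cosmetic, and the paper glosses over the same point.
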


\begin{proof}
We use the method of characteristics to show the existence of classical solutions. For \eqref{eq:ddap-1}, the solution can be written explicitly as
\begin{equation}\label{eq:ddap-slh}
h(t,x)=h_0\big(\Phi(0,t,x)\big)\exp\left\{-\int^t_0\nabla\cdot v\big(s,\Phi(s,t,x)\big){\rm d}s\right\}
\end{equation}
where $\Phi(t,s,x)\in C^1([0,T]\times[0,T]\times\mathbb{T}^3)$ is the unique solution of
\begin{equation}\label{eq:ddap-slh-1}
\begin{cases}
\partial_{t}\Phi(t,s,x)=v\big(t,\Phi(t,s,x)\big) & 0\leq t\leq T \\ \Phi(s,s,x)=x & 0\leq s\leq T,\;x\in\mathbb{T}^3
\end{cases}
\end{equation}

Because $v\in C([0,T],C^{k+1}(\mathbb{T}^3,\mathbb{R}^3))$, the existence and uniqueness of such solution is obtained directly by Cauchy-Lipschitz Theorem. Moreover, we can proof that the solution $\Phi(t,s,x)\in C^1([0,T],C^1([0,T],C^{k}(\mathbb{T}^3,\mathbb{R}^3)))$.(Take space derivatives on both side of the equation, the new equation is composed of lower derivatives and is linear for the highest older derivatives. By induction, the Cauchy-Lipschitz Theorem gives a solution and by uniqueness, we can show the solution of \eqref{eq:ddap-slh-1} is that sufficiently differentiable.)

%We can proof this conclusion by applying Banach fixed point theorem in the space $C([0,T],C^1(\times[0,T],C^{k+1}(\mathbb{T}^3,\mathbb{R}^3)))$ to find the fixed point of

%\begin{equation}\label{eq:ddap-slh-2}
%\Phi(t,s,x)= x + \int^t_s v\big(\tau,\Phi(\tau,s,x)\big){\rm d}\tau
%\end{equation}

For \eqref{eq:ddap-2}, we also have an explicit expression of the solution
\begin{equation}\label{eq:ddap-slb}
B(t,x)=G\big(t,\Phi(0,t,x)\big)\exp\left\{-\int^t_0\nabla\cdot v\big(s,\Phi(s,t,x)\big){\rm d}s\right\}
\end{equation}
where $G(t,x)\in C^1([0,T]\times\mathbb{T}^3)$ is the unique solution of
\begin{equation}\label{eq:ddap-slb-1}
\begin{cases}
\partial_{t}G(t,x)=\nabla v\big(t,\Phi(t,0,x)\big)\cdot G(t,x) & 0\leq t\leq T \\\qquad\qquad- (\nabla\times d)\big(t,\Phi(t,0,x)\big)\exp\left\{\int^t_0\nabla\cdot v\big(s,\Phi(s,0,x)\big){\rm d}s\right\} & \\ G(0,x)=B_0(x) & x\in\mathbb{T}^3
\end{cases}
\end{equation}

By the same reason, $G(t,x)\in C([0,T],C^{k}(\mathbb{T}^3,\mathbb{R}^3))$. It is not hard verify that $(h,B)$ defined in \eqref{eq:ddap-slh},\eqref{eq:ddap-slb} is indeed a solution. At last, let us look into the uniqueness of solutions. Because the equations are linear with respect to $h$ and $B$, it is easy to show the uniqueness by the $L^2$ estimates of the difference of two solutions. We can also see it from the following lemma.
\end{proof}

Now for any fixed $z=(d,v)\in C([0,T],C^{k+1}(\mathbb{T}^3,\mathbb{R}^6))$, let us denote $h[z,h_0]$, $B[z,B_0]$ the unique solution of \eqref{eq:ddap-1},\eqref{eq:ddap-2} with initial value $h_0,B_0$ at time $t=0$. Then we have the following lemma.

\begin{lemma}\label{lm:ddap2}
Suppose $z=(d,v)\in C([0,T],C^{k+1}(\mathbb{T}^3,\mathbb{R}^6))$, $k\geq1$, $h_0,B_0$ are smooth functions, $\nabla\cdot B_0=0,\;h_0>0$. Then we have\\
{\rm\bf(i)} For all $t\in[0,T],\;x\in\mathbb{T}^3$,
\begin{equation}\label{eq:ddap-e1}
0<e^{-\int_0^t \|\nabla\cdot v(s)\|_{\infty} {\rm d}s}\inf_{x\in\mathbb{T}^3} h_0 \leq h(t,x)\leq e^{\int_0^t \|\nabla\cdot v(s)\|_{\infty} {\rm d}s}\sup_{x\in\mathbb{T}^3} h_0
\end{equation}
{\rm\bf(ii)} Suppose $\displaystyle{\sup_{t\in[0,T]}\|v(t)\|_{C^{k+1}},\;\sup_{t\in[0,T]}\|d(t)\|_{C^{k+1}}\leq M_{k+1}}$, then
\begin{equation}\label{eq:ddap-e2}
\sup_{t\in[0,T]}\big\|h[z,h_0]\big\|_{H^k(\mathbb{T}^3)}\leq C_{k}(T,\|h_0\|_{H^k},M_{k+1})
\end{equation}
\begin{equation}\label{eq:ddap-e3}
\sup_{t\in[0,T]}\big\|B[z,B_0]\big\|_{H^k(\mathbb{T}^3)}\leq C_{k}(T,\|B_0\|_{H^k},M_{k+1})
\end{equation}
{\rm\bf(iii)} For any $\kappa>0$, any $z,\tilde{z}$ belonging to the set
$$\mathcal{C}_{\kappa}=\big\{z\in C([0,T],C^3(\mathbb{T}^3,\mathbb{R}^6))\big| \sup_{t\in[0,T]}\|v(t)\|_{C^{3}},\;\sup_{t\in[0,T]}\|d(t)\|_{C^{3}}\leq \kappa \big\}$$
we have
\begin{equation}\label{eq:ddap-e4}
\sup_{t\in[0,T]}\big\|h[z,h_0]-h[\tilde{z},h_0]\big\|_{H^1(\mathbb{T}^3)}\leq c(T,\|h_0\|_{H^2},\kappa)\sup_{t\in[0,T]}\big\|z(t)-\tilde{z}(t)\big\|_{C^2}
\end{equation}
\begin{equation}\label{eq:ddap-e5}
\sup_{t\in[0,T]}\big\|B[z,B_0]-B[\tilde{z},B_0]\big\|_{H^1(\mathbb{T}^3)}\leq c(T,\|B_0\|_{H^2},\kappa)\sup_{t\in[0,T]}\big\|z(t)-\tilde{z}(t)\big\|_{C^2}
\end{equation}

\end{lemma}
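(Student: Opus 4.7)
This is immediate from the representation formula \eqref{eq:ddap-slh} established in \lmref{lm:ddap1}. Since the flow $\Phi(0,t,\cdot)$ is a diffeomorphism of $\mathbb{T}^3$, the factor $h_0(\Phi(0,t,x))$ lies between $\inf h_0$ and $\sup h_0$, while the exponential is controlled pointwise by $\exp\bigl(\pm\int_0^t\|\nabla\cdot v(s)\|_\infty\,ds\bigr)$. Multiplying the two bounds yields \eqref{eq:ddap-e1} directly, and in particular preserves strict positivity of $h$.

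\textbf{Plan for (ii):} I would abandon the characteristic formula here and work directly from the transport equations, since energy estimates are cleaner in $H^k$ than composition/chain-rule bookkeeping. Rewrite \eqref{eq:ddap-1} as $\partial_t h + v\cdot\nabla h + h\,\nabla\cdot v=0$, and rewrite \eqref{eq:ddap-2} (using $\nabla\cdot B=0$ so that $\nabla\times(B\times v)=(v\cdot\nabla)B-(B\cdot\nabla)v+B(\nabla\cdot v)$) as
\begin{equation*}
\partial_t B + (v\cdot\nabla)B = (B\cdot\nabla)v - B(\nabla\cdot v) - \nabla\times d.
\end{equation*}
For each multi-index $|\alpha|\le k$, apply $\partial^\alpha$, take the $L^2$ inner product with $\partial^\alpha h$ (resp.\ $\partial^\alpha B$), and integrate by parts in the transport term $\int(v\cdot\nabla\partial^\alpha h)\partial^\alpha h=-\tfrac{1}{2}\int(\nabla\cdot v)|\partial^\alpha h|^2$. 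The remaining contributions are commutators $[\partial^\alpha, v\cdot\nabla]h$ and products of the form $\partial^\alpha(h\,\nabla\cdot v)$, both controlled by the Moser estimate $\|fg\|_{H^k}\lesssim\|f\|_\infty\|g\|_{H^k}+\|g\|_\infty\|f\|_{H^k}$. Summing over $\alpha$ yields
\begin{equation*}
\frac{d}{dt}\|h\|_{H^k}^2 \le C(M_{k+1})\bigl(\|h\|_{H^k}^2 + 1\bigr),
\end{equation*}
and similarly for $B$ with an extra forcing term $\|\nabla\times d\|_{H^k}\le M_{k+1}$. Gronwall's inequality then gives \eqref{eq:ddap-e2}--\eqref{eq:ddap-e3}.

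\textbf{Plan for (iii):} Let $\delta h = h[z,h_0]-h[\tilde z,h_0]$ and $\delta B$ defined analogously, with $z=(d,v)$, $\tilde z=(\tilde d,\tilde v)$. Subtracting the two transport equations, $\delta h$ solves
\begin{equation*}
\partial_t\delta h + v\cdot\nabla\delta h + (\nabla\cdot v)\delta h = -(v-\tilde v)\cdot\nabla\tilde h - \tilde h\,\nabla\cdot(v-\tilde v),
\end{equation*}
with zero initial data, and $\delta B$ satisfies an analogous linear equation with source linear in $\delta z=z-\tilde z$ and coefficients involving $\tilde B, \tilde h$ and their first derivatives. Perform an $H^1$ energy estimate: the right-hand side is bounded in $H^1$ by $C\|\delta z\|_{C^2}(\|\tilde h\|_{H^2}+\|\tilde B\|_{H^2}+1)$, and by part (ii) with $k=2$ these $H^2$ norms are controlled by $c(T,\|h_0\|_{H^2},\|B_0\|_{H^2},\kappa)$. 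Gronwall applied to $\|\delta h\|_{H^1}^2+\|\delta B\|_{H^1}^2$ yields \eqref{eq:ddap-e4}--\eqref{eq:ddap-e5}.

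\textbf{Main obstacle:} The most delicate point is keeping the constants in (ii) genuinely of the form $C_k(T,\|h_0\|_{H^k},M_{k+1})$, i.e.\ not requiring control of $\|h\|_{L^\infty}$ (since that control comes only afterwards from (i)). The Moser estimate requires $L^\infty$ bounds on one of the factors; for $\nabla\cdot v$ this is given by $M_{k+1}$, but for $h$ one needs Sobolev embedding $H^k\hookrightarrow L^\infty$ (valid for $k\ge 2$ in three dimensions) and a short bootstrap for the $k=1$ case using (i). An analogous care is needed for $B$. Once this bookkeeping is settled, steps (i) and (iii) follow by routine arguments.
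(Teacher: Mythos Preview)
Your proposal is correct and follows essentially the same approach as the paper: part (i) from the characteristic formula, parts (ii) and (iii) via $H^k$ (resp.\ $H^1$) energy estimates on the transport equations and Gronwall, with (iii) invoking (ii) at level $k=2$ to control $\|\tilde h\|_{H^2},\|\tilde B\|_{H^2}$.

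One remark on your ``main obstacle'': it is not actually an obstacle. Because $v,d\in C^{k+1}$ (not merely $H^{k+1}$), you do not need the symmetric Moser product estimate at all. In the Leibniz expansion of $\partial^\alpha(h\,\nabla\cdot v)$ or of the commutator $[\partial^\alpha,v\cdot\nabla]h$, every factor carrying derivatives of $v$ (or $d$) is bounded in $L^\infty$ by $M_{k+1}$, so each term is estimated by $M_{k+1}\|h\|_{H^k}$ (resp.\ $M_{k+1}\|B\|_{H^k}$) with no appeal to $\|h\|_{L^\infty}$ or $\|B\|_{L^\infty}$. This is exactly how the paper proceeds, and it makes the bootstrap/embedding argument you sketch unnecessary.
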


\begin{proof}
(i) This is a direct conclusion from the explicit expression of $h$ in \eqref{eq:ddap-slh}.

(ii) Let's prove \eqref{eq:ddap-e3}. The case for \eqref{eq:ddap-e2} is simpler. For any $\alpha\in\mathbb{N}^3$, $|\alpha|\leq k$, we have
\begin{multline*}
\partial_{t}(\partial^{\alpha}B_i) +\partial_jv_j\partial^{\alpha}B_i + v_j\partial_j(\partial^{\alpha}B_i)-\partial_jv_i\partial^{\alpha}B_j + \epsilon_{ijk}\partial^{\alpha}(\partial_j d_k) \\+\sum_{\beta<\alpha}c_{\alpha,\beta}\Big[\partial^{\alpha-\beta}(\partial_jv_j)\partial^{\beta}B_i + \partial^{\alpha-\beta}v_j\partial^{\beta}(\partial_j B_i)-\partial^{\alpha-\beta}(\partial_jv_i)\partial^{\beta}B_j\Big]=0
\end{multline*}
Here $c_{\alpha,\beta}$ are constants depending on the choice of $\alpha,\beta$, $\epsilon_{ijk}$ is the Levi-Civita symbol. From the above equality, we have that
\begin{multline*}
\partial_{t}\int|\partial^{\alpha}B_i|^2 +\int\Big[\partial_jv_j|\partial^{\alpha}B_i|^2 - 2\partial_jv_i\partial^{\alpha}B_i\partial^{\alpha}B_j + 2 \epsilon_{ijk}\partial^{\alpha}(\partial_j d_k)\partial^{\alpha}B_i\Big] \\+2\sum_{\beta<\alpha}c_{\alpha,\beta}\int\Big[\partial^{\alpha-\beta}(\partial_jv_j)\partial^{\beta}B_i + \partial^{\alpha-\beta}v_j\partial^{\beta}(\partial_j B_i)-\partial^{\alpha-\beta}(\partial_jv_i)\partial^{\beta}B_j\Big]\partial^{\alpha}B_i=0
\end{multline*}
Now, we sum up all the index $|\alpha|\leq k,\;i$, then there exist a constant $c(k)$, such that
\begin{equation}
\partial_{t}\|B\|_{H^{k}}^2 \leq c(k)M_{k+1}(\|B\|_{H^{k}}^2+1)
\end{equation}
Therefore, by Gronwall's lemma, we can get the conclusion.

(iii) We only prove the estimate for $B$. Let $\omega=B[z,B_0]-B[\tilde{z},B_0]$, then we have that
\begin{multline*}
\partial_{t}\omega_i +\partial_jv_j\omega_i + v_j\partial_j w_i-\partial_jv_i\omega_j + \epsilon_{ijk}\partial_j( d_k-\tilde{d}_k) \\ + \partial_j(v_j-\tilde{v}_j)\tilde{B}_i + (v_j-\tilde{v}_j)\partial_j \tilde{B}_i-\partial_j(v_i-\tilde{v}_i)\tilde{B}_j=0
\end{multline*}
So we have
\begin{multline*}
\partial_{t}\int|\omega_i|^2 +\int\partial_jv_j|\omega_i|^2 - 2\int\partial_jv_i\omega_i\omega_j + 2 \epsilon_{ijk}\int \partial_j( d_k-\tilde{d}_k)\omega_i \\ + 2\int\partial_j(v_j-\tilde{v}_j)\tilde{B}_i\omega_i + 2\int(v_j-\tilde{v}_j)\partial_j \tilde{B}_i\omega_i - 2\int\partial_j(v_i-\tilde{v}_i)\tilde{B}_j\omega_i=0
\end{multline*}
\begin{equation*}
\partial_{t}\|w\|_{L^2}^2 \leq c(\|z-\tilde{z}\|_{C^1} + \|v\|_{C^1} + 1)\|w\|_{L^2}^2 + c\|z-\tilde{z}\|_{C^1}(\|\tilde{B}\|_{H^1}^2+1)
\end{equation*}
Now we use the same strategy as in (ii) to compute the estimate for $D\omega$, without entering the details, we finally can get
\begin{equation}
\partial_{t}\|w\|_{H^1}^2 \leq c(\|v-\tilde{v}\|_{C^2} + \|v\|_{C^2} + 1)\|w\|_{H^1}^2 + c\|z-\tilde{z}\|_{C^2}(\|\tilde{B}\|_{H^2}^2+1)
\end{equation}
By Gronwall's lemma and\eqref{eq:ddap-e3}, this implies the inequality \eqref{eq:ddap-e5}.

\end{proof}

%%%%%%%%%%%%%%%%%%%%%%%%%%%%%%%%%%%%%%%%%

\subsection{The Faedo-Galerkin Approximate Scheme}

Now let us consider the approximate equation \eqref{eq:ddap-4} and \eqref{eq:ddap-3}. We will not try to find a solution.  Instead, we will find an approximate solution that satisfy the weak formulation of \eqref{eq:ddap-4} and \eqref{eq:ddap-3} on a finite dimensional space $X_N$, very like the Galerkin method. For the torus $\mathbb{T}^3$, we can choose $X_N=[\text{span}\{e_i,\tilde{e}_i\}_{i=1}^{N}]^3$ equipped with the 2-norm ($\dim X_N=6N$), where $e_i=\sqrt{2}\sin (2\pi \vec{k}_i\cdot x),\;\tilde{e}_i=\sqrt{2}\cos(2\pi \vec{k}_i\cdot x)$ and $\{\vec{k}_i\}_{i=1}^{\infty}$ is a permutation of $\mathbb{Z}^3_+$, where
\begin{equation*}{\small
  \mathbb{Z}^3_+:=\Big\{(n_1,n_2,n_3)\in \mathbb{Z}^3|\ n_1\geq0\Big\}\backslash \Big(\big\{(0,n_2,n_3)\in  \mathbb{Z}^3|\ n_2<0\big\}\cup\big\{(0,0,n_3)\in  \mathbb{Z}^3|\ n_3\leq 0\big\}\Big)}.
\end{equation*}

We can see that $\{e_i,\tilde{e}_i\}_{i=1}^{\infty}$ is not only the normalized orthogonal basis of $L^2(\mathbb{T}^3)$, but also the orthogonal basis of $H^k(\mathbb{T}^3),\;k\in \mathbb{N}$. Now we set $X=\bigcup_{n=1}^{\infty}X_n$, then $X$ is dense in $C^k(\mathbb{T}^3,\mathbb{R}^3),\;k\in\mathbb{N}$. Because $X_N$ has finite dimensions ($\dim X_N=6N$), then there exist a constant $c=c(N,k)$, such that
$$\|v\|_{X_N}=\|v\|_{L^2},\;\|v\|_{C^k}\leq c(N,k)\|v\|_{X_N},\;\forall v\in X_N$$

Now for every strictly positive function $\rho\in L^1(\mathbb{T}^3),\;\rho\geq\underline{\rho}>0$, we introduce a family of operators
$$\mathcal{M}_N[\rho]:\ X_{N}\mapsto X_N^{*},\ \big<\mathcal{M}_N[\rho]v,u\big>=\int_{\mathbb{T}^3}\rho v\cdot u,\;\;\forall u,v\in X_N$$
Then we have the following lemma:

\begin{lemma}\label{lm:ddfg1}
The family of operator $\mathcal{M}_N[\cdot]$ satisfies the following  properties\\
{\rm\bf (i)} For any function $\rho\in L^1(\mathbb{T}^3),\ \mathcal{M}_N[\rho]\in\mathcal{L}(X_{N},X_N^{*})$.\\
{\rm\bf(ii)} For any strictly positive function $\rho\in L^1(\mathbb{T}^3),\;\rho\geq\underline{\rho}>0$, $\mathcal{M}_N[\rho]$ is invertible, $\mathcal{M}_N^{-1}[\rho]\in\mathcal{L}(X_N^*,X_N)$, and
\begin{equation}\label{eq:ddfg-e1}
\|\mathcal{M}_N^{-1}[\rho]\|_{\mathcal{L}(X_N^*,X_N)}\leq\underline{\rho}^{-1}
\end{equation}
{\rm\bf(iii)} For any $\rho_1,\rho_2\in L^1(\mathbb{T}^3),\;\rho_1,\rho_2\geq\underline{\rho}>0$
\begin{equation}\label{eq:ddfg-e2}
\|\mathcal{M}_N^{-1}[\rho_1]-\mathcal{M}_N^{-1}[\rho_2]\|_{\mathcal{L}(X_N^*,X_N)}\leq c(N,\underline{\rho})\|\rho_1-\rho_2\|_{L^1}
\end{equation}

\end{lemma}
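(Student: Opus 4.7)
The plan is to treat the three statements as essentially finite-dimensional linear algebra on the Hilbert space $X_N$ (with inner product $\int v\cdot u$), exploiting throughout the equivalence of norms on $X_N$: there exists $c(N)$ such that $\|v\|_{L^\infty}\le c(N)\|v\|_{L^2}=c(N)\|v\|_{X_N}$ for every $v\in X_N$, simply because $X_N$ has dimension $6N$ and consists of smooth trigonometric polynomials.

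For (i), I would bound the bilinear form directly: for $u,v\in X_N$,
$$|\langle \mathcal{M}_N[\rho]v,u\rangle|=\left|\int_{\mathbb{T}^3}\rho\, v\cdot u\right|\le \|\rho\|_{L^1}\|v\|_{L^\infty}\|u\|_{L^\infty}\le c(N)^2\|\rho\|_{L^1}\|v\|_{X_N}\|u\|_{X_N},$$
which gives $\mathcal{M}_N[\rho]\in\mathcal{L}(X_N,X_N^*)$. For (ii), the key is the coercivity estimate $\langle \mathcal{M}_N[\rho]v,v\rangle=\int\rho|v|^2\ge\underline{\rho}\|v\|_{X_N}^2$: this shows $\mathcal{M}_N[\rho]$ is injective, hence bijective on the finite-dimensional space $X_N$. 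The sharp norm bound follows by testing the equation $\mathcal{M}_N[\rho]v=f$ against $v$ itself:
$$\underline{\rho}\|v\|_{X_N}^2\le\langle f,v\rangle\le\|f\|_{X_N^*}\|v\|_{X_N},$$
yielding $\|v\|_{X_N}\le\underline{\rho}^{-1}\|f\|_{X_N^*}$, i.e.\ \eqref{eq:ddfg-e1}. Note that this bound is remarkably $N$-independent, which will matter later when passing $N\to\infty$.

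For (iii), I would use the standard resolvent-style identity
$$\mathcal{M}_N^{-1}[\rho_1]-\mathcal{M}_N^{-1}[\rho_2]=\mathcal{M}_N^{-1}[\rho_1]\bigl(\mathcal{M}_N[\rho_2]-\mathcal{M}_N[\rho_1]\bigr)\mathcal{M}_N^{-1}[\rho_2],$$
so that by (ii),
$$\|\mathcal{M}_N^{-1}[\rho_1]-\mathcal{M}_N^{-1}[\rho_2]\|_{\mathcal{L}(X_N^*,X_N)}\le\underline{\rho}^{-2}\|\mathcal{M}_N[\rho_1-\rho_2]\|_{\mathcal{L}(X_N,X_N^*)}.$$
Then, exactly as in (i), the bilinear estimate gives
$$\|\mathcal{M}_N[\rho_1-\rho_2]\|_{\mathcal{L}(X_N,X_N^*)}\le c(N)^2\|\rho_1-\rho_2\|_{L^1},$$
and \eqref{eq:ddfg-e2} follows with $c(N,\underline{\rho})=c(N)^2\underline{\rho}^{-2}$.

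There is no real obstacle here; the whole lemma is elementary functional analysis on a finite-dimensional space. The only point worth flagging is conceptual: the $L^1$-to-operator-norm control in (i) and (iii) is only possible thanks to the finite-dimensional reduction, and this is precisely the reason one must work with the Galerkin projection $X_N$ at this stage rather than directly with an infinite-dimensional function space. The constants in (i) and (iii) blow up as $N\to\infty$, but the constant $\underline{\rho}^{-1}$ in (ii) does not, which is the one that will be needed for passing to the limit in the forthcoming fixed-point argument.
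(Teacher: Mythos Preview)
Your proof is correct and follows essentially the same approach as the paper: the same bilinear bound for (i), the same coercivity/finite-dimensionality argument for (ii), and the same resolvent identity $\mathcal{M}_N^{-1}[\rho_1]-\mathcal{M}_N^{-1}[\rho_2]=\mathcal{M}_N^{-1}[\rho_1](\mathcal{M}_N[\rho_2]-\mathcal{M}_N[\rho_1])\mathcal{M}_N^{-1}[\rho_2]$ for (iii). Your explicit identification of the constant $c(N,\underline{\rho})=c(N)^2\underline{\rho}^{-2}$ and the remark on which bounds are $N$-independent are accurate additions that the paper leaves implicit.
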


\begin{proof}
(i) For any $u,v\in X_N$, we have
\begin{equation}\label{eq:ddfg-e3}
\left|\big<\mathcal{M}_N[\rho]v,u\big>\right|\leq\|v\cdot u\|_{L^{\infty}}\int|\rho|\leq c(N)\|\rho\|_{L^1}\|v\|_{X_N}\|u\|_{X_N}
\end{equation}
(ii) For any $v_1,v_2\in X_N,\;v_1\neq v_2$, let $u=v_1-v_2$, then we have
$$\big<\mathcal{M}_N[\rho]v_1-\mathcal{M}_N[\rho]v_2,u\big>=\int\rho|v_1-v_2|^2>0$$
So $\mathcal{M}_N[\rho]:\ X_{N}\mapsto X_N^{*}$ is injective. Because $X_N$ has finite dimension, so $\mathcal{M}_N[\rho]:\ X_{N}\mapsto X_N^{*}$ is bijective, thus invertible. For any $\chi\in X_N$, set $v=\mathcal{M}^{-1}_N[\rho]\chi$, then
$$\|\chi\|_{X_N^*}\|v\|_{X_N}\geq\big<\chi,v\big>=\int\rho|v|^2\geq\underline{\rho}\|v\|^2_{L^2}=\underline{\rho}\|v\|^2_{X_N}$$
$$\Longrightarrow\qquad\|\mathcal{M}^{-1}_N[\rho]\chi\|_{X_N}\leq\underline{\rho}^{-1}\|\chi\|_{X_N^*}$$

(iii) Because of the identity
 $$\mathcal{M}_N^{-1}[\rho_1]-\mathcal{M}_N^{-1}[\rho_2]=\mathcal{M}_N^{-1}[\rho_1](\mathcal{M}_N[\rho_2]-\mathcal{M}_N[\rho_1])\mathcal{M}_N^{-1}[\rho_2]$$
Use the inequality in \eqref{eq:ddfg-e1} and \eqref{eq:ddfg-e3}, we can get the result.
\end{proof}

With the above properties, by standard fixed point argument, we can get the following theorem:

\begin{theorem}\label{thm:ddfg1}
For any initial data $0<h_0\in C^{\infty}(\mathbb{T}^3)$, divergence-free vector field $B_0\in C^{\infty}(\mathbb{T}^3,\mathbb{R}^3)$, $P_0,D_0\in L^2(\mathbb{T}^3,\mathbb{R}^3)$ and any $T>0$, there exists a solution $(h_n,B_n,d_n,v_n)$ with $h_n\in C^1([0,T],C^k(\mathbb{T}^3))$, $B_n\in C^1([0,T],C^k(\mathbb{T}^3,\mathbb{R}^3))$, $z_n=(d_n,v_n)\in C([0,T],X_n\times X_n)$ to the equation \eqref{eq:ddap-1}-\eqref{eq:ddap-4} in the following sense:\\
{\rm\bf(i)} \eqref{eq:ddap-1} and \eqref{eq:ddap-2} are satisfied in the classical sense, that is $h_n=h[z_n,h_0],\;B_n=B[z_n,B_0]$.\\
{\rm\bf(ii)} \eqref{eq:ddap-4} and \eqref{eq:ddap-3} are satisfied in the weak sense on $X_N$, more specifically, for any $t\in[0,T]$ and any $\psi\in X_N$,
\begin{equation}\label{eq:ddfg-s2}
\int h_n(t)d_n(t)\cdot\psi {\rm d}x- \int D_0 \cdot\psi{\rm d}x = \int_0^t\int \mathcal{S}_{\varepsilon}(h_n,B_n,d_n,v_n)\cdot\psi {\rm d}x{\rm d}s
\end{equation}
\begin{equation}\label{eq:ddfg-s1}
\int h_n(t)v_n(t)\cdot\psi {\rm d}x- \int P_0 \cdot\psi{\rm d}x = \int_0^t\int \mathcal{N}_{\varepsilon}(h_n,B_n,d_n,v_n)\cdot\psi {\rm d}x{\rm d}s
\end{equation}
where
$$\mathcal{S}_{\varepsilon}(h,B,d,v)= -\nabla\cdot[h(d\otimes v- v\otimes d)] - (-\triangle)^{l}d +\varepsilon^{-1}\big[\nabla\times b -hd\big]$$
$$\mathcal{N}_{\varepsilon}(h,B,d,v)= (h d\cdot\nabla)d - \nabla\cdot (hv\otimes v) -(-\triangle)^{l}v + \varepsilon^{-1}\left[\nabla\cdot \left(\frac{B\otimes B}{h}\right) + \nabla\left(h^{-1}\right)-hv\right]$$

\end{theorem}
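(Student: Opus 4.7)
The plan is a standard Faedo--Galerkin construction: fix $n$ and seek $z_n = (d_n, v_n) \in C([0,T], X_n \times X_n)$ as a fixed point of a solution map built out of \lmref{lm:ddap1}, \lmref{lm:ddap2} and \lmref{lm:ddfg1}. Concretely, for any $z = (d,v) \in C([0,T_*], X_n \times X_n)$, use \lmref{lm:ddap1} to define the classical $h = h[z, h_0] > 0$ and $B = B[z, B_0]$ with $\nabla\cdot B = 0$. By \lmref{lm:ddap2}(i), $h(t,\cdot) \geq \underline{h}(t) > 0$, so by \lmref{lm:ddfg1}(ii) the operator $\mathcal{M}_n^{-1}[h(t)]: X_n^* \to X_n$ exists and is bounded. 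Denote by $\Pi_n$ the natural restriction of a distribution to $X_n$. Define $\Phi(z) = (\tilde d, \tilde v)$ by
\begin{equation*}
\tilde d(t) = \mathcal{M}_n^{-1}[h(t)]\left(\Pi_n D_0 + \int_0^t \Pi_n \mathcal{S}_\varepsilon(h, B, d, v)\, ds\right),
\end{equation*}
\begin{equation*}
\tilde v(t) = \mathcal{M}_n^{-1}[h(t)]\left(\Pi_n P_0 + \int_0^t \Pi_n \mathcal{N}_\varepsilon(h, B, d, v)\, ds\right),
\end{equation*}
so that a fixed point of $\Phi$ yields exactly the weak formulation \eqref{eq:ddfg-s2}--\eqref{eq:ddfg-s1} on $X_n$, with $h_n = h[z_n, h_0]$ and $B_n = B[z_n, B_0]$ satisfying \eqref{eq:ddap-1}--\eqref{eq:ddap-2} classically.

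The first key step is to show that $\Phi$ is a contraction on a ball $\{\|z\|_{C([0,T_*]; X_n \times X_n)} \leq \kappa\}$ provided $T_*$ is small enough, with $T_*$ depending on $\kappa, n, \varepsilon$ and the data. Since all norms on $X_n$ are equivalent (with constants depending on $n$), one controls $\|z\|_{C^{k+1}}$ by $\kappa$; this feeds into \lmref{lm:ddap2}(ii) to bound $h, B$ in $H^k$, and into \eqref{eq:ddap-e4}--\eqref{eq:ddap-e5} to control the dependence of $(h, B)$ on $z$. The operators $\mathcal{S}_\varepsilon, \mathcal{N}_\varepsilon$ are at most polynomial in $(h, B, d, v)$ plus fixed linear differential operators, so their Lipschitz dependence on $z$ in a topology that pairs with $X_n$ is straightforward. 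Combined with Lipschitz continuity of $\mathcal{M}_n^{-1}[h]$ in $h$ (\lmref{lm:ddfg1}(iii)), one finds that the Lipschitz constant of $\Phi$ is bounded by $C(\kappa, n, \varepsilon, h_0, B_0)\, T_*$, and the Banach fixed-point theorem yields a solution $z_n$ on $[0, T_*]$.

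The second step is to extend the solution from $[0, T_*]$ to $[0, T]$ via an a priori estimate. Since $(d_n(t), v_n(t)) \in X_n$ may themselves be used as test functions in the weak equations, the Galerkin solution genuinely satisfies the energy identity \eqref{eq:ddapin-1}, which yields
\begin{equation*}
\int_{\mathbb{T}^3}\left[\frac{1+B_n^2}{2h_n}+\varepsilon\frac{h_n(v_n^2+d_n^2)}{2}\right](t) + \int_0^t\!\!\int_{\mathbb{T}^3}h_n(v_n^2+d_n^2) + \varepsilon\int_0^t\!\!\left(\|\nabla^l v_n\|_{L^2}^2 + \|\nabla^l d_n\|_{L^2}^2\right)ds \leq E_0,
\end{equation*}
with $E_0$ depending only on the data. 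Since $X_n$ contains no constant mode, $\|\nabla^l w\|_{L^2}$ controls $\|w\|_{L^2}$ on $X_n$, so $\int_0^T (\|v_n\|_{X_n}^2 + \|d_n\|_{X_n}^2)\,dt \leq C(n, \varepsilon, E_0)$ and hence, by \lmref{lm:ddap2}(i), $h_n$ admits strictly positive lower and upper bounds independent of the time location. These uniform bounds imply that the local existence time $T_*$ obtained above can be chosen independent of the starting time inside $[0,T]$, and a standard continuation argument extends the solution to the full interval.

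The main obstacle is precisely making the local $T_*$ uniform under reiteration. The contraction estimate is not automatic because $\mathcal{M}_n^{-1}[h(t)]$ depends on the unknown $z$ through the characteristic equations of \lmref{lm:ddap1}, and $\mathcal{S}_\varepsilon, \mathcal{N}_\varepsilon$ contain the singular factors $\varepsilon^{-1}$ and $h^{-1}$; closing the loop requires the energy identity to provide pointwise bounds on $h_n, h_n^{-1}$ and $L^2_t$-bounds on $\|z_n\|_{X_n}$ that are uniform along the iteration. Once these are in place, every remaining verification reduces to routine ODE estimates on the finite-dimensional space $X_n \times X_n$.
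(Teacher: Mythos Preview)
Your approach follows the same two-step scheme as the paper: local existence via a fixed point on $C([0,T_*],X_n\times X_n)$, then continuation to $[0,T]$ using the energy identity \eqref{eq:ddapin-1}. The one structural difference is that you invoke Banach contraction, whereas the paper uses Schauder: it shows that $K_n$ maps the ball $F_{\kappa_0,\sigma_0}$ into a Lipschitz-in-time (hence precompact by Arzel\`a--Ascoli) subset of itself and is merely continuous. Both routes work here, since the Lipschitz estimates \eqref{eq:ddap-e4}--\eqref{eq:ddap-e5} arise from Gronwall with zero initial difference and thus carry an implicit factor of $T_*$, making the map a contraction for $T_*$ small; Schauder simply spares you from tracking that factor.

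There is, however, a gap in your continuation argument. You state that closing the loop requires only $L^2_t$-bounds on $\|z_n\|_{X_n}$, but reiterating the local step requires \emph{pointwise-in-time} control of the restarted data $\tilde D_0=h_n(T_0)d_n(T_0)$, $\tilde P_0=h_n(T_0)v_n(T_0)$ in $L^2_x$ (these fix the radius $\kappa$), and of $\|h_n(T_0)\|_{H^k}$, $\|B_n(T_0)\|_{H^k}$ (these enter the Lipschitz constants via \lmref{lm:ddap2}). The paper fills this in as follows: once $h_n$ is bounded below, the stored energy $\tfrac{\varepsilon}{2}\int h_n(|v_n|^2+|d_n|^2)\le E_0$ yields $\sup_t\|z_n(t)\|_{L^2_x}\le C(\varepsilon)$; equivalence of norms on $X_n$ upgrades this to $\sup_t\|z_n(t)\|_{C^{k+1}}\le C(n,\varepsilon)$, and then \lmref{lm:ddap2}(ii) gives the uniform $H^k$ bounds on $h_n,B_n$. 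With these additions your argument goes through.
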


\begin{proof}
{\bf Step\ 1:\ Local Existence}\newline

Let us define a map $J_n[\cdot]:\ L^2(\mathbb{T}^3,\mathbb{R}^3)\mapsto X_n^*$, such that
$$\big<J_n[f],\psi\big>=\int f\cdot\psi,\;\forall\psi\in X_n.$$ $J_n$ can be seen as the orthogonal projection of $L^2(\mathbb{T}^3,\mathbb{R}^3)$ onto $X_n^*$. We have
\begin{equation*}
\|J_n[f]\|_{X_n^*}\leq\|f\|_{L^2}
\end{equation*}
Let us consider the operator $K_n[\cdot]:\ C([0,T],X_n\times X_n)\mapsto C([0,T],X_n\times X_n)$, which maps $z=(d,v)\mapsto K_n[z]=(K_n[z]^d,K_n[z]^v)$ such that
$$K_n[z]^d(t)=\mathcal{M}^{-1}_n[h[z,h_0](t)]\left(J_{n}\left[D_0 + \int_0^t \mathcal{S}_{\varepsilon}(h[z,h_0],B[z,B_0],d,v)(s){\rm d}s\right]\right)$$
$$K_n[z]^v(t)=\mathcal{M}^{-1}_n[h[z,h_0](t)]\left(J_{n}\left[P_0 + \int_0^t \mathcal{N}_{\varepsilon}(h[z,h_0],B[z,B_0],d,v)(s){\rm d}s\right]\right)$$
To prove the theorem, it suffices to show that $K_n$ has a fixed point on $C([0,T],X_n\times X_n)$. First, we should prove that $K_n$ is well defined. It is obvious that $K_n[z](t)\in X_n\times X_n$, $\forall t\in[0,T]$. We only need to prove the continuity. In fact, we can prove that $K_n[z](t)$ is Lipschitz continuous in time on the set
$$F_{\kappa,\sigma}=\big\{(d,v)\in C([0,\sigma],X_n\times X_n)\big|\|v(t)\|_{X_n},\|d(t)\|_{X_n}\leq \kappa,\;\forall t\in [0,\sigma]\big\},\quad 0<\sigma\leq T.$$
In fact, for any $s,t\in[0,\sigma]$,
\begin{equation*}
\begin{array}{r@{}l}
& \qquad \displaystyle{\big\|K_n[z]^v(t)-K_n[z]^v(s)\big\|_{X_n}}\\
\leq  & \displaystyle{\ \left\|\Big(\mathcal{M}^{-1}_n[h[z,h_0](t)]-\mathcal{M}^{-1}_n[h[z,h_0](s)]\Big)\left(J_{n}\left[P_0 + \int_0^s \mathcal{N}_{\varepsilon}(h,B,d,v)(r){\rm d}r\right]\right)\right\|_{X_n}}\\
& \quad \displaystyle{+\ \  \left\|\mathcal{M}^{-1}_n[h[z,h_0](t)]\left(J_{n}\left[\int_s^t \mathcal{N}_{\varepsilon}(h[z,h_0],B[z,B_0],d,v)(r){\rm d}r\right]\right)\right\|_{X_n} }
\end{array}
\end{equation*}
Now, we let
\begin{equation}\label{eq:ddfg-con}
C_0=\max\big\{(\inf h_0)^{-1},\sup h_0,\|h_0\|_{H^4},\|B_0\|_{H^4},\|P_0\|_{L^2},\|D_0\|_{L^2}\big\}
\end{equation}
By \eqref{eq:ddap-e1},\eqref{eq:ddap-e2},\eqref{eq:ddap-e3},\eqref{eq:ddfg-e1},\eqref{eq:ddfg-e2}, we have
\begin{equation*}
\begin{array}{r@{}l}
\displaystyle{
\big\|K_n[z]^v(t)-K_n[z]^v(s)\big\|_{X_n}} &
\leq \displaystyle{\ \ c(T,n,\varepsilon,\kappa,C_0)\left[\int\int_s^t\big|\partial_th[z,h_0](r)\big|{\rm d}r{\rm d}x + |t-s|\right]}\\
&\leq  \displaystyle{\ \ c(T,n,\varepsilon,\kappa,C_0) |t-s|}
\end{array}
\end{equation*}
By the same reason,
\begin{equation*}
\big\|K_n[z]^d(t)-K_n[z]^d(s)\big\|_{X_n}
\leq c(T,n,\varepsilon,\kappa,C_0) |t-s|
\end{equation*}
Therefore, we have that
$$K_n[\cdot]:\ F_{\kappa,\sigma}\mapsto W^{1,\infty}([0,\sigma],X_n\times X_n)\subset\subset C([0,\sigma],X_n\times X_n).$$
Moreover, we have
$$\big\|K_n[z]^v(t)\big\|_{X_n}\leq \|\mathcal{M}^{-1}_n[h_0]J_n[P_0]\|_{X_n} + c(T,n,\varepsilon,\kappa,C_0) t$$
$$\big\|K_n[z]^d(t)\big\|_{X_n}\leq \|\mathcal{M}^{-1}_n[h_0]J_n[D_0]\|_{X_n} + c(T,n,\varepsilon,\kappa,C_0) t$$

Now if we choose $\kappa=\kappa_0$ big enough and $\sigma=\sigma_0$ small enough, for example
\begin{equation}\label{eq:ddfg-kt}
\kappa_0\geq 2(\inf h_0)^{-1}\big(\|P_0\|_{L^2}+\|D_0\|_{L^2}\big),\;\;\sigma_0\leq\frac{\kappa_0}{2c(T,n,\varepsilon,\kappa_0,C_0)}
\end{equation}
Then we have that $K_n[\cdot]:\ F_{\kappa_0,\sigma_0}\hookrightarrow F_{\kappa_0,\sigma_0}$. By Arzel\'a-Ascoli theorem, $K_n[F_{\kappa_0,\sigma_0}]$ is a relatively compact subset of $F_{\kappa_0,\sigma_0}$. Now if we can prove that $K_n$ is a continuous map, then by Schauder's Fixed Point Theorem, there exist a fixed point $z$ of $K_n$ on $F_{\kappa_0,\sigma_0}$, such that $z=K_n[z]$. Now, let's show that the map $K_n$ is continuous on $F_{\kappa_0,\sigma_0}$. For $z,\tilde{z}\in F_{\kappa_0,\sigma_0}$, $\forall t\in[0,\sigma_0]$, we have,
\begin{equation*}
\begin{array}{r@{}l}
& \qquad \displaystyle{\big\|K_n[z]^v(t)-K_n[\tilde{z}]^v(t)\big\|_{X_n}}\\
\leq  & \displaystyle{\ \ \left\|\big[\mathcal{M}^{-1}_n[h[z,h_0](t)]-\mathcal{M}^{-1}_n[h[\tilde{z},h_0](t)]\big]\left(J_{n}\left[P_0 + \int_0^t \mathcal{N}_{\varepsilon}(h,B,d,v)(\tau){\rm d}\tau\right]\right)\right\|_{X_n}}\\
 &  \displaystyle{+\ \left\|\mathcal{M}^{-1}_n[h[\tilde{z},h_0](t)]\left(J_{n}\left[\int_0^t \Big(\mathcal{N}_{\varepsilon}(h,B,d,v)(\tau)-\mathcal{N}_{\varepsilon}(\tilde{h},\tilde{B},\tilde{d},\tilde{v})(\tau)\Big){\rm d}\tau\right]\right)\right\|_{X_n} }\\
\leq  & \displaystyle{\quad c(T,n,\varepsilon,\kappa_0,C_0)\big\|h[z,h_0](t)-h[\tilde{z},h_0](t)\big\|_{L^1}}\\
& \ \displaystyle{+\  c(T,n,\varepsilon,\kappa_0,C_0) \Big\|\mathcal{N}_{\varepsilon}(h[z,h_0],B[z,B_0],d,v) -\mathcal{N}_{\varepsilon}(h[\tilde{z},h_0],B[\tilde{z},B_0],\tilde{d},\tilde{v})\Big\|_{L^2_{t,x}}}
\end{array}
\end{equation*}
By \eqref{eq:ddap-e4} and \eqref{eq:ddap-e5}, we can finally have that
\begin{equation*}
\begin{array}{r@{}l}
& \qquad \displaystyle{\sup_{t\in[0,T]}\big\|K_n[z]^v(t)-K_n[\tilde{z}]^v(t)\big\|_{X_n}}\\
\leq  & \displaystyle{\ \ c(T,n,\varepsilon,\kappa_0,C_0)\Big(\big\|v-\tilde{v}\big\|_{C([0,\sigma_0],X_n)}+\big\|d-\tilde{d}\big\|_{C([0,\sigma_0],X_n)}\Big)}
\end{array}
\end{equation*}
By similar argument, we can show that $K_n$ is continuous on $F_{\kappa_0,\sigma_0}$. Then by Schauder's Fixed Point Theorem, there is a ``solution" on $[0,\sigma_0]$.\newline

{\bf Step\ 2:\ Global Existence}\newline

From the above argument, we know that at small time interval $[0,\sigma_0]$ there exist a solution $z\in F_{\kappa_0,\sigma_0}$ s.t. $z=K_n[z]$. Now we want to apply the fixed point argument repeatedly to obtain the existence on the whole time interval $[0,T]$. Now we suppose that on $[0,T_0], T_0<T$, we have a fixed point $z=K_n[z]$, we use $\tilde{h}_0=h[z,h_0](T_0)$, $\tilde{B}_0=B[z,B_0](T_0)$, $\tilde{D}_0=\tilde{h}_0d(T_0)$, $\tilde{P}_0=\tilde{h}_0v(T_0)$ as our new initial data, and use the local existence result above to extend the existence interval. This argument can be applied as long as we can prove that there is still a fixed point in $F_{\kappa_0,\tau_0}$ with our new initial data $(\tilde{h}_0,\tilde{B}_0,\tilde{D}_0,\tilde{P}_0)$ while the constants $\kappa_0,\sigma_0$ chosen in previous part do not change. (They only depend on the initial data $h_0,B_0,D_0,P_0,\varepsilon, T$ and $n$). Now, we only need to prove that the constant $\tilde{C}_0$ for the new initial data defined in \eqref{eq:ddfg-con} has a uniform bound.

Suppose $(h_n,B_n,d_n,v_n)$ is the solution on $[0,T_0]$. Because  $d_n,v_n\in C([0,T_0],X_n)$ is Lipschitz continuous, so it is differentiable almost everywhere. We take the derivative on both sides of \eqref{eq:ddfg-s2} and \eqref{eq:ddfg-s1}, then we have that, for any $\varphi,\psi\in X_n$, any $g\in C^1(\mathbb{T}^3),\;\phi\in C^1(\mathbb{T}^3,\mathbb{R}^3)$,
\begin{equation*}
\int \partial_th_n(t)g   - \int h_n(t)v_n(t)\cdot\nabla g =0
\end{equation*}
\begin{equation*}
\int \partial_t B_n(t)\cdot\phi  - \int (B_n\otimes v_n-v_n\otimes B_n):\nabla\phi + \int d_n\cdot(\nabla\times\phi)=0
\end{equation*}
\begin{multline*}
\int \partial_t\big(h_n(t)d_n(t)\big)\cdot\psi  - \int h_n(d_n\otimes v_n-v_n\otimes d_n):\nabla\psi +\int\nabla^ld_n:\nabla^l\psi \\ + \varepsilon^{-1}\int\big[-b_n\cdot(\nabla\times\psi) + h_nd_n\cdot\psi \big]=0
\end{multline*}
\begin{multline*}
\int \partial_t\big(h_n(t)v_n(t)\big)\cdot\varphi -\int h_nv_n\otimes v_n:\nabla\varphi -\int\big[(h_n d_n\cdot\nabla) d_n\big]\cdot\varphi \\+ \int\nabla^lv_n:\nabla^l\varphi   + \varepsilon^{-1}\int\left[h_n^{-1}(B_n\otimes B_n +I_3):\nabla\varphi + h_nv_n\cdot\varphi\right]=0
\end{multline*}

Now, let's choose $\phi=b_n(t)=B_n(t)/h_n(t),\;\psi=\varepsilon d_n(t),\;\varphi=\varepsilon v_n(t)$ and $g=-\frac{1}{2}\big(|h_n(t)|^{-2} +|b_n(t)|^2+\varepsilon|v_n(t)|^2+\varepsilon|d_n(t)|^2\big)$, then add the these equations together, we have the following equality
\begin{multline}\label{eq:ddfg-en1}
\frac{{\rm d}}{{\rm d}t}\int\frac{h_n(t)}{2}\Big(|h_n(t)|^{-2} +|b_n(t)|^2+\varepsilon|v_n(t)|^2+\varepsilon|d_n(t)|^2\Big)\\ + \int h_n(t)(|v_n(t)|^2+|d_n(t)|^2)  +\varepsilon\int\left[|\nabla^{l}v_n(t)|^2+|\nabla^{l}d_n(t)|^2\right]=0
\end{multline}
We denote $\displaystyle{\Lambda_n(t)=\int\frac{h_n(t)}{2}\Big(|h_n(t)|^{-2} +|b_n(t)|^2+\varepsilon|v_n(t)|^2+\varepsilon|d_n(t)|^2\Big)}$, then from the above equality, we have
\begin{equation}\label{eq:ddfg-enco1}
\sup_{t\in[0,T_0]}\Lambda_n(t),\; \|h^{\frac{1}{2}}_n v_n\|^2_{L^2_{t,x}} ,\; \|h^{\frac{1}{2}}_n d_n\|^2_{L^2_{t,x}} ,\; \varepsilon\|\nabla^{l}v_n\|_{L^2_{t,x}}^2,\;\varepsilon\|\nabla^{l}d_n\|_{L^2_{t,x}}^2\leq \Lambda_n(0)
\end{equation}
By Cauchy-Schwartz inequality, we can easily get that
$$\|d_n\|_{L^{2}(L^1)},\|v_n\|_{L^{2}(L^1)}\leq \sqrt{2}\Lambda_n(0),\;\;\;\|\nabla^lv_n\|_{L^{2}(L^1)}\leq\varepsilon^{-1}\Lambda_n(0)$$
So we get that $\|v_n\|_{L^2(W^{l,1})}\leq c(\varepsilon,\Lambda_n(0))$. With $l>4$, by Sobolev embedding, we have that $\|v_n\|_{L^2(W^{1,\infty})}\leq c(\varepsilon,\Lambda_n(0))$. So by the result in \lmref{lm:ddap2}.(i), we have that, there exist a constant $c_0=c_0(\varepsilon,T,\Lambda_n(0))>0$ s.t. for all $t\in[0,T_0],\;x\in\mathbb{T}^3$,
$$
0<c_0\leq h_n(t,x)\leq c_0^{-1}
$$
So by \eqref{eq:ddfg-enco1}, we have
\begin{equation*}
\sup_{t\in[0,T_0]}\|v_n\|^2_{L^2},\;\sup_{t\in[0,T_0]}\|d_n\|^2_{L^2},\;\sup_{t\in[0,T_0]}\|P_n\|^2_{L^2},\;\sup_{t\in[0,T_0]}\|D_n\|^2_{L^2}\leq 2(\varepsilon c_0)^{-1}\Lambda_n(0)
\end{equation*}
Then by \lmref{lm:ddap2}.(ii), we have that
$$\sup_{t\in[0,T_0]}\|h\|_{H^4},\sup_{t\in[0,T_0]}\|B\|_{H^4}\leq c(T,n,\varepsilon,\Lambda_n(0),\|h_0\|_{H^4},\|B_0\|_{H^4})$$
So we know that $\tilde{C}_0$ have a uniform bound that depends only on initial data. So we can see from the choice of $\kappa_0,\sigma_0$ in \eqref{eq:ddfg-kt} that by slightly modify the choice of $\kappa_0,\sigma_0$, the fixed point method can be repeatedly applied on the same space $F_{\kappa_0,\sigma_0}$, so we get the global existence on $[0,T]$.

\end{proof}

%%%%%%%%%%%%%%%%%%%%%%%%%%%%%%%%%%%%%%%%%%%%
%%%%%%%%%%%%%%%%%%%%%%%%%%%%%%%%%%%%%%%%%%%%
%%%%%%%%%%%%%%%%%%%%%%%%%%%%%%%%%%%%%%%%%%%%

\section{Existence of the Dissipative Solution}\label{sec:7}

%%%%%%%%%%%%%%%%%%%%%%%%%%%%%%%%%%%%%%%%%%%%

\subsection{Smooth approximation of initial data}

We suppose that our initial data $h_{0}$ is a nonnegative Borel measure in $C(\mathbb{T}^3,\mathbb{R})'$, $B_{0}\in C(\mathbb{T}^3,\mathbb{R}^3)'$, satisfying $\nabla\cdot B_{0}=0$ in the sense of distributions. Moreover, we suppose $0<\Lambda(h_{0},U_{0})<\infty$, where $U_{0}=\big(\mathcal{L},B_{0}\big)$. Now we will find a family of smooth functions to approach our initial data.

Let us define a positive Schwartz function $\widetilde{\rho}(x)=\frac{1}{(2\pi)^{\frac{3}{2}}}e^{-\frac{|x|^2}{2}}\in C^{\infty}(\mathbb{R}^3,\mathbb{R})$. We have that $\int_{\mathbb{R}^{3}}\widetilde{\rho}(x){\rm d}x=1$. For any $0<\varepsilon<1$, we define a function $\rho_{\varepsilon}$ on $\mathbb{T}^{3}$ by
\begin{equation}
\rho_{\varepsilon}(x)=\sum_{\vec{k}\in\mathbb{Z}^{3}}\widetilde{\rho}_{\varepsilon}(x+\vec{k})=\sum_{\vec{k}\in\mathbb{Z}^{3}}\frac{1}{\varepsilon^{3}}
\widetilde{\rho}\left(\frac{x+\vec{k}}{\varepsilon}\right).
\end{equation}
We can easily check that $\rho_{\varepsilon}(x)$ is also a smooth positive function on $\mathbb{T}^3$, and we have $\int_{\mathbb{T}^{3}}\rho_{\varepsilon}(x){\rm d}x=1$. Now, for $0<\varepsilon<1$, we define
\begin{equation}
h_{0}^{\varepsilon}=h_{0}\ast\rho_{\varepsilon}=\int_{\mathbb{T}^3}\rho_{\varepsilon}(x-y){\rm d}\:h_{0}(y),\;\;\;B_{0}^{\varepsilon}=B_{0}\ast\rho_{\varepsilon}
\end{equation}
Because $0<\Lambda(h_{0},U_{0})<\infty$, then $h_{0}\geq0,\;h_{0}\neq0$. So we have that $h_{0}^{\varepsilon}>0$ for any $0<\varepsilon<1$. Besides, it's easily to verify that $B_{0}^{\varepsilon},h_{0}^{\varepsilon}$ are smooth functions on $\mathbb{T}^3$ and converge to $B_{0},h_{0}$ in the weak-$*$ topology of $C(\mathbb{T}^3)'$. Moreover, for any smooth function $\phi$ on $\mathbb{T}^3$, we have
\begin{equation}
\begin{array}{r@{}l}
\displaystyle{ \int_{\mathbb{T}^3} \phi\nabla \cdot B^{\varepsilon}_{0} } &  \displaystyle{  = - \int_{\mathbb{T}^3}\nabla\phi(x)\cdot\left(\int_{\mathbb{T}^3}\rho_{\varepsilon}(x-y){\rm d}B_{0}(y)\right) {\rm d}x }\\
& \displaystyle{  = \int_{\mathbb{T}^3}\nabla_{y}\left(\int_{\mathbb{T}^3}\phi(x)\rho_{\varepsilon}(x-y){\rm d}x\right){\rm d}B_{0}(y) = 0}
\end{array}
\end{equation}
So we know that
\begin{equation}\label{eq:ddpf-inapb2}
\nabla \cdot B^{\varepsilon}_{0}=0.
\end{equation}
Besides, we can get the following result.

\begin{proposition}\label{prop:ddpf1}
For all $0<\varepsilon<1$, we have that
\begin{equation}\label{eq:ddpf-inap}
\Lambda(h^{\varepsilon}_{0},U^{\varepsilon}_{0})=\int\frac{|B^{\varepsilon}_{0}|^{2}+1}{2h^{\varepsilon}_{0}}\leq\Lambda(h_{0},U_{0})
\end{equation}
Moreover, we have that
$$\Lambda(h^{\varepsilon}_{0},U^{\varepsilon}_{0})\rightarrow\Lambda(h_{0},U_{0})\quad {as}\quad \varepsilon\rightarrow 0.$$

\end{proposition}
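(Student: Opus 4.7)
The plan is to exploit the dual definition \eqref{eq:dddfl} of $\Lambda$ as a supremum of weak-$*$ continuous linear functionals; this exhibits $\Lambda$ as a convex, weak-$*$ lower semicontinuous functional, and convolution with the probability density $\rho_\varepsilon$ will turn out to be contractive for $\Lambda$ by Jensen's inequality. The convergence then follows by combining this contractivity with weak-$*$ lower semicontinuity.

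First, the equality in \eqref{eq:ddpf-inap} is immediate from formula \eqref{eq:lam1}: since $\rho_\varepsilon$ is a probability density, convolving with it sends the Lebesgue measure on $\mathbb{T}^3$ to itself, so $U_0^\varepsilon = U_0 \ast \rho_\varepsilon = (\mathcal{L}, B_0^\varepsilon)$ is absolutely continuous with respect to the strictly positive smooth measure $h_0^\varepsilon\,\mathcal{L}$, with $L^2$ density $(1/h_0^\varepsilon,\, B_0^\varepsilon/h_0^\varepsilon)$. Plugging into \eqref{eq:lam1} gives $\Lambda(h_0^\varepsilon, U_0^\varepsilon) = \tfrac{1}{2}\int (1+|B_0^\varepsilon|^2)/h_0^\varepsilon$.

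Next I would prove the inequality by working directly from the supremum. Given any admissible $(a,A) \in C(\mathbb{T}^3;\mathbb{R}\times\mathbb{R}^4)$ with $a + \tfrac{1}{2}|A|^2 \leq 0$, the symmetry $\rho_\varepsilon(-x)=\rho_\varepsilon(x)$ and Fubini yield
$$\int a\,h_0^\varepsilon + A\cdot U_0^\varepsilon \;=\; \int (a\ast\rho_\varepsilon)\,dh_0 + (A\ast\rho_\varepsilon)\cdot dU_0.$$
The crucial point is that the convolved pair $(\tilde a,\tilde A) := (a\ast\rho_\varepsilon, A\ast\rho_\varepsilon)$ is still admissible: by Jensen's inequality for the convex function $|\cdot|^2$ applied to the probability density $\rho_\varepsilon(\cdot-y)$,
$$\tilde a(y) + \tfrac{1}{2}|\tilde A(y)|^2 \;\leq\; \int\bigl[a(x) + \tfrac{1}{2}|A(x)|^2\bigr]\rho_\varepsilon(x-y)\,dx \;\leq\; 0.$$
Moreover $(\tilde a,\tilde A)$ is smooth, hence continuous on $\mathbb{T}^3$. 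Therefore the right-hand side is bounded by $\Lambda(h_0, U_0)$, and taking the supremum over $(a,A)$ gives $\Lambda(h_0^\varepsilon, U_0^\varepsilon) \leq \Lambda(h_0, U_0)$.

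For the convergence as $\varepsilon\to 0$, I would use the fact that for each fixed admissible continuous $(a,A)$, the weak-$*$ convergences $h_0^\varepsilon \rightharpoonup h_0$ and $U_0^\varepsilon \rightharpoonup U_0$ imply $\int a\,h_0^\varepsilon + A\cdot U_0^\varepsilon \to \int a\,h_0 + A\cdot U_0$. Thus $\liminf_{\varepsilon\to 0}\Lambda(h_0^\varepsilon,U_0^\varepsilon)$ bounds each such integral from above, and taking the supremum yields the standard weak-$*$ lower semicontinuity $\liminf_\varepsilon \Lambda(h_0^\varepsilon,U_0^\varepsilon) \geq \Lambda(h_0,U_0)$; combined with the contractivity of the previous step, this forces $\Lambda(h_0^\varepsilon,U_0^\varepsilon) \to \Lambda(h_0,U_0)$. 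The only real subtlety is the Jensen step, which uses in an essential way that $\rho_\varepsilon$ is both a probability density and even, so that the convolution can be transferred cleanly from $(h_0^\varepsilon,U_0^\varepsilon)$ onto the test pair while preserving the admissibility constraint.
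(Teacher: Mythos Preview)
Your proof is correct and follows essentially the same route as the paper: both transfer the mollifier from $(h_0^\varepsilon,U_0^\varepsilon)$ onto the test pair via Fubini, check that the convolved pair remains admissible (you phrase it as Jensen for $|\cdot|^2$, the paper as Cauchy--Schwarz with the probability density $\rho_\varepsilon$---the same inequality here), and then combine the resulting contractivity with weak-$*$ lower semicontinuity of the supremum to get the convergence.
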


\begin{proof}

We know that
\begin{equation*}
\begin{array}{r@{}l}
& \quad\Lambda(h^{\varepsilon}_{0},U^{\varepsilon}_{0})\\
= & \displaystyle{\sup_{a\in C(\mathbb{T}^3,\mathbb{R}), A\in C(\mathbb{T}^3,\mathbb{R}^{4})\atop a+\frac{1}{2}|A|^2\leq 0}\int a(x) \left(\int\rho_{\varepsilon}(x-y){\rm d}h_{0}(y)\right){\rm d}x + \int A(x) \left(\int\rho_{\varepsilon}(x-y){\rm d}U_{0}(y)\right){\rm d}x }\\
= & \displaystyle{\sup_{a\in C(\mathbb{T}^3,\mathbb{R}), A\in C(\mathbb{T}^3,\mathbb{R}^{4})\atop a+\frac{1}{2}|A|^2\leq 0}\int \left(\int a(x)\rho_{\varepsilon}(x-y){\rm d}x\right){\rm d}h_{0}(y) + \int \left(\int A(x)\rho_{\varepsilon}(x-y){\rm d}x\right){\rm d}U_{0}(y) }
\end{array}
\end{equation*}
By Cauchy-Schwarz inequality, we can easily know that
\begin{equation*}
\begin{array}{r@{}l}
\displaystyle{ \left|\int A(x)\rho_{\varepsilon}(x-y){\rm d}x\right|^2 } & \displaystyle{ \leq\left(\int |A(x)|^2\rho_{\varepsilon}(x-y){\rm d}x\right)\left(\int \rho_{\varepsilon}(x-y){\rm d}x\right)} \\
& \displaystyle{\leq-2\int a(x)\rho_{\varepsilon}(x-y){\rm d}x}
\end{array}
\end{equation*}
So we get that
$$\Lambda(h^{\varepsilon}_{0},U^{\varepsilon}_{0}) \leq \sup_{\tilde{a}\in C(\mathbb{T}^3,\mathbb{R}), \tilde{A}\in C(\mathbb{T}^3,\mathbb{R}^{4})\atop \tilde{a}+\frac{1}{2}|\tilde{A}|^2\leq 0} \big\langle h_{0},\tilde{a}\big\rangle + \big\langle U_{0},\tilde{A}\big\rangle=\Lambda(h_{0},U_{0})$$
Now because for each fixed continuous function $a,A$, $\big\langle h^{\varepsilon}_{0},a\big\rangle\rightarrow\big\langle h_{0},a\big\rangle,\;\big\langle U^{\varepsilon}_{0},A\big\rangle\rightarrow\big\langle U_{0},B\big\rangle$ as $\varepsilon\rightarrow0$, then we have that
$$\liminf_{\varepsilon\rightarrow0}\Lambda(h^{\varepsilon}_{0},U^{\varepsilon}_{0}) \geq\Lambda(h_{0},U_{0})$$
Combining the above two results, we can get the convergence $\Lambda(h^{\varepsilon}_{0},U^{\varepsilon}_{0})\rightarrow\Lambda(h_{0},U_{0})$ as $\varepsilon\rightarrow0$.

\end{proof}

%%%%%%%%%%%%%%%%%%%%%%%%%%%%%%%%%%%%%%%%%%%%%%%%%

\subsection{Existence of converging sequence}

Now let $\big\{\varepsilon_k\big\}_{k=1}^{\infty}$ be a sequence such that $0<\varepsilon_k<1$, $\displaystyle{\lim_{k\rightarrow\infty}\varepsilon_k=0}$. By \thmref{thm:ddfg1}, for every $n\in\mathbb{N}^*$, there exists a solution $(h_n^{\varepsilon_k},B_n^{\varepsilon_k},d_n^{\varepsilon_k},v_n^{\varepsilon_k})$ on $[0,T]$ that satisfies \eqref{eq:ddap-1},\eqref{eq:ddap-2},\eqref{eq:ddfg-s2},\eqref{eq:ddfg-s1} with $\varepsilon=\varepsilon_k$ and the initial data  $(h_0^{\varepsilon_k},\;B_0^{\varepsilon_k},\;0,\;0)$. For simplicity, we denote
$$B_n^{\varepsilon_k}=h_n^{\varepsilon_k}b_n^{\varepsilon_k},\;\;\;\;D_n^{\varepsilon_k}=h_n^{\varepsilon_k}d_n^{\varepsilon_k},\;\;\;\;
P_n^{\varepsilon_k}=h_n^{\varepsilon_k}v_n^{\varepsilon_k},$$
$$\Lambda_n^{\varepsilon_k}(t)=\int\frac{h_n^{\varepsilon_k}}{2}\Big(\big(h_n^{\varepsilon_k}\big)^{-2}+\big|b_n^{\varepsilon_k}\big|^2+
\varepsilon_k\big|d_n^{\varepsilon_k}\big|^2+\varepsilon_k\big|v_n^{\varepsilon_k}\big|^2\Big),\;\;\;\Lambda^{\varepsilon_k}_0=\int\frac{|B_0^{\varepsilon_k}\big|^2
+1}{2h_0^{\varepsilon_k}}$$

\begin{lemma}\label{lm:ddpf-cov1}
Suppose $(h_n^{\varepsilon_k},B_n^{\varepsilon_k},d_n^{\varepsilon_k},v_n^{\varepsilon_k})$ is the solution in \thmref{thm:ddfg1} with initial data $(h^{\varepsilon_k}_0,B^{\varepsilon_k}_0,0,0)$. Then there exist a constant $C_0$ that depends only on $h_0,B_0$, such that for all $n$ and $\varepsilon_k$,
\begin{equation}\label{eq:ddpf-covc1}
\big\|h_n^{\varepsilon_k}\big\|_{L^{\infty}_t(L^1_x)},\big\|h_n^{\varepsilon_k}b_n^{\varepsilon_k}\big\|_{L^{\infty}_t(L^1_x)},
\big\|h_n^{\varepsilon_k}d_n^{\varepsilon_k}\big\|_{L^{2}_t(L^1_x)},\big\|h_n^{\varepsilon_k}v_n^{\varepsilon_k}\big\|_{L^{2}_t(L^1_x)}\leq C_0
\end{equation}
\begin{equation}
\sqrt{\varepsilon_k}\big\|\nabla^ld_n^{\varepsilon_k}\big\|_{L^{2}_{t,x}},\sqrt{\varepsilon_k}\big\|\nabla^lv_n^{\varepsilon_k}\big\|_{L^{2}_{t,x}}\leq C_0
\end{equation}
\end{lemma}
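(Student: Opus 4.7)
The plan is to exploit the energy identity~\eqref{eq:ddfg-en1} already established in the proof of \thmref{thm:ddfg1}, combined with the $\varepsilon$-uniform initial-data bound from \propref{prop:ddpf1}, and then convert the natural ``$L^2(h\,dx\,dt)$''-type energy estimates into the $L^1_x$-type estimates required by the statement via Cauchy--Schwarz.

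More concretely, I would proceed as follows. First, integrating \eqref{eq:ddfg-en1} on $[0,t]$ gives
\begin{equation*}
\Lambda_n^{\varepsilon_k}(t)+\int_0^t\!\!\int_{\mathbb{T}^3}h_n^{\varepsilon_k}\bigl(|v_n^{\varepsilon_k}|^2+|d_n^{\varepsilon_k}|^2\bigr)+\varepsilon_k\int_0^t\!\!\int_{\mathbb{T}^3}\bigl(|\nabla^l v_n^{\varepsilon_k}|^2+|\nabla^l d_n^{\varepsilon_k}|^2\bigr)=\Lambda_0^{\varepsilon_k}.
\end{equation*}
By \propref{prop:ddpf1}, $\Lambda_0^{\varepsilon_k}\le\Lambda(h_0,U_0)<\infty$ uniformly in $k$, which immediately gives the bounds on $\sqrt{\varepsilon_k}\,\nabla^l d_n^{\varepsilon_k}$ and $\sqrt{\varepsilon_k}\,\nabla^l v_n^{\varepsilon_k}$ in $L^2_{t,x}$, as well as the control
$\int_0^T\!\!\int h_n^{\varepsilon_k}(|v_n^{\varepsilon_k}|^2+|d_n^{\varepsilon_k}|^2)\le\Lambda(h_0,U_0)$ and $\sup_t\Lambda_n^{\varepsilon_k}(t)\le\Lambda(h_0,U_0)$. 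Recall also that $\int h_0^{\varepsilon_k}=\int h_0=\|h_0\|_{TV}$, which is finite since $h_0\in C(\mathbb{T}^3,\mathbb{R})'$ is a nonnegative finite measure.

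Next, since \eqref{eq:ddap-1} is solved in the classical sense (by \lmref{lm:ddap1}), integration over the torus gives mass conservation $\int_{\mathbb{T}^3}h_n^{\varepsilon_k}(t)\,dx=\int_{\mathbb{T}^3}h_0^{\varepsilon_k}\,dx=\|h_0\|_{TV}$, which is the bound on $\|h_n^{\varepsilon_k}\|_{L^\infty_t(L^1_x)}$. For the remaining three $L^1_x$ estimates I would apply Cauchy--Schwarz pointwise in $t$:
\begin{equation*}
\int_{\mathbb{T}^3}\bigl|h_n^{\varepsilon_k}f_n^{\varepsilon_k}\bigr|=\int_{\mathbb{T}^3}\sqrt{h_n^{\varepsilon_k}}\cdot\sqrt{h_n^{\varepsilon_k}}\,|f_n^{\varepsilon_k}|\le\Bigl(\int h_n^{\varepsilon_k}\Bigr)^{1/2}\Bigl(\int h_n^{\varepsilon_k}|f_n^{\varepsilon_k}|^2\Bigr)^{1/2},
\end{equation*}
applied successively with $f_n^{\varepsilon_k}=b_n^{\varepsilon_k},\,d_n^{\varepsilon_k},\,v_n^{\varepsilon_k}$. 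For $f_n^{\varepsilon_k}=b_n^{\varepsilon_k}$ the last factor is controlled by $\sup_t\Lambda_n^{\varepsilon_k}(t)$, yielding the $L^\infty_t(L^1_x)$ bound on $h_n^{\varepsilon_k}b_n^{\varepsilon_k}$. For $f_n^{\varepsilon_k}=v_n^{\varepsilon_k}$ or $d_n^{\varepsilon_k}$ I would square and integrate in time:
\begin{equation*}
\int_0^T\Bigl(\int_{\mathbb{T}^3}|h_n^{\varepsilon_k}v_n^{\varepsilon_k}|\Bigr)^{\!2}\,dt\le\|h_0\|_{TV}\int_0^T\!\!\int_{\mathbb{T}^3}h_n^{\varepsilon_k}|v_n^{\varepsilon_k}|^2\,dx\,dt\le\|h_0\|_{TV}\,\Lambda(h_0,U_0),
\end{equation*}
giving the $L^2_t(L^1_x)$ estimates. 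Taking $C_0$ to be the maximum of these expressions (depending only on $\|h_0\|_{TV}$ and $\Lambda(h_0,U_0)$, hence only on $(h_0,B_0)$) completes the proof.

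There is no real obstacle here: the entire argument is an accounting exercise on the energy identity plus Cauchy--Schwarz. The only subtlety worth verifying carefully is that the constant is genuinely $\varepsilon_k$- and $n$-independent, which is precisely ensured by the uniform upper bound $\Lambda_0^{\varepsilon_k}\le\Lambda(h_0,U_0)$ from \propref{prop:ddpf1} together with exact conservation of $\int h_n^{\varepsilon_k}$ under the characteristic-based representation \eqref{eq:ddap-slh}; the time derivatives of $d$ and $v$ and the hyperviscous terms in \eqref{eq:ddap-4}--\eqref{eq:ddap-3} do not enter the $L^1_x$ estimates at all.
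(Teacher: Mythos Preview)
Your proposal is correct and follows essentially the same approach as the paper: integrate the energy identity \eqref{eq:ddfg-en1}, bound the right-hand side by $\Lambda(h_0,U_0)$ via \propref{prop:ddpf1}, use mass conservation for $h_n^{\varepsilon_k}$, and convert the weighted $L^2$ bounds to $L^1_x$ bounds by Cauchy--Schwarz. The only point you leave implicit is that $\Lambda_n^{\varepsilon_k}(0)=\Lambda_0^{\varepsilon_k}$ because the initial data $(D_0,P_0)=(0,0)$ force $d_n^{\varepsilon_k}(0)=v_n^{\varepsilon_k}(0)=0$ via the invertibility of $\mathcal{M}_n[h_0^{\varepsilon_k}]$; the paper handles this the same way, so there is no gap.
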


\begin{proof}
By \lmref{lm:ddap2}, we know that $h_n^{\varepsilon_k}$ is always positive. Since $h_n^{\varepsilon_k}$ solves \eqref{eq:ddap-1}, we have
$$\int h_n^{\varepsilon_k}(t)=\int h_0^{\varepsilon_k}=\int\left(\int\rho_{\varepsilon_k}(x-y){\rm d}h_{0}(y)\right){\rm d}x=\int h_0$$
By \eqref{eq:ddfg-enco1} and \eqref{eq:ddpf-inap}, we know that
\begin{multline}
\sup_{t\in[0,T]}\int\frac{1 +|B^{\varepsilon_k}_n(t)|^2}{2 h^{\varepsilon_k}_n(t)} + \int^{T}_{0}\int h^{\varepsilon_k}_n(t)(|v^{\varepsilon_k}_n(t)|^2+|d^{\varepsilon_k}_n(t)|^2) \\ +\varepsilon_k\int^{T}_{0}\int\left[|\nabla^{l}v^{\varepsilon_k}_n(t)|^2+|\nabla^{l}d^{\varepsilon_k}_n(t)|^2\right]\leq \int\frac{1 +|B^{\varepsilon_k}_0|^2}{2 h^{\varepsilon_k}_0} \leq \Lambda(h_0,U_0)
\end{multline}
By Cauchy-Schwartz inequality,
$$\left(\int\big|B_n^{\varepsilon_k}(t)\big|\right)^2\leq \left(\int\frac{|B^{\varepsilon_k}_n(t)|^2}{h_n^{\varepsilon_k}(t)}\right)\left(\int h_n^{\varepsilon_k}(t)\right)\leq4\Lambda(h_0,U_0)\int h_0$$
$$\int^{T}_{0}\left(\int\big|h_n^{\varepsilon_k}(t)v_n^{\varepsilon_k}(t)\big|\right)^2\leq\int^{T}_{0}\int h_n^{\varepsilon_k}(t)\big|v_n^{\varepsilon_k}(t)\big|^2\int h_n^{\varepsilon_k}(t)\leq\Lambda(h_0,U_0)\int h_0$$
We can get the conclusion easily from the above estimates.

\end{proof}

From the above lemma, we know that $(h_n^{\varepsilon_k},B_n^{\varepsilon_k},D_n^{\varepsilon_k},P_n^{\varepsilon_k})$ are bounded in some suitable spaces, so we can extract a converging subsequence.

\begin{lemma}\label{lm:ddpf-cov2}
There exists a subsequence $\big\{n_i\big\}_{i=1}^{\infty}\subseteq\mathbb{N}^*$, $h\in C([0,T],C(\mathbb{T}^3,\mathbb{R})_{w^{*}}')$, $B\in C([0,T],C(\mathbb{T}^3,\mathbb{R}^3)_{w^{*}}')$, $P,D\in C([0,T]\times\mathbb{T}^3,\mathbb{R}^3)'$, such that
$$h_{n_i}^{\varepsilon_{n_i}}\rightarrow h\;\text{in}\;C([0,T],C(\mathbb{T}^3,\mathbb{R})_{w^{*}}'),\;\;\;B_{n_i}^{\varepsilon_{n_i}}\rightarrow B\;\text{in}\;C([0,T],C(\mathbb{T}^3,\mathbb{R}^3)_{w^{*}}')$$
$$h_{n_i}^{\varepsilon_{n_i}}\xrightarrow{w^*} h\;\text{in}\;C([0,T]\times\mathbb{T}^3,\mathbb{R})'$$
$$B_{n_i}^{\varepsilon_{n_i}}\xrightarrow{w^*} B,\;P_{n_i}^{\varepsilon_{n_i}}\xrightarrow{w^*} P,\;D_{n_i}^{\varepsilon_{n_i}}\xrightarrow{w^*} D\;\text{in}\;C([0,T]\times\mathbb{T}^3,\mathbb{R}^3)'$$
Moreover, we have that $(h,B)$ is bounded in $C^{0,\frac{1}{2}}([0,T],C(\mathbb{T}^3,\mathbb{R}^4)'_{w^*})$ by some constant that depends only on $T$ and $(h_0,B_0)$.

\end{lemma}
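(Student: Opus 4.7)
The plan is to combine Banach--Alaoglu for the space-time measures $(D,P)$ with an Arzel\`a--Ascoli argument in the metrizable weak-$*$ topology for $(h,B)$. The latter requires two ingredients beyond \lmref{lm:ddpf-cov1}: uniform time-equicontinuity against smooth test functions, and additional $L^2_tL^1_x$ bounds on the ``non-conservative'' variables $d_n^{\varepsilon_k},v_n^{\varepsilon_k}$ themselves (not just on $D_n^{\varepsilon_k}=h_n^{\varepsilon_k}d_n^{\varepsilon_k}$ and $P_n^{\varepsilon_k}$).

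First I would extract the missing $L^2_tL^1_x$ bounds on $d_n^{\varepsilon_k}$ and $v_n^{\varepsilon_k}$. The energy identity \eqref{eq:ddfg-en1} together with \propref{prop:ddpf1} yields in particular $\int 1/h_n^{\varepsilon_k}(t)\leq 2\Lambda(h_0,U_0)$ uniformly in $t,n,k$. Combined with $\int_0^T\!\int h_n^{\varepsilon_k}|d_n^{\varepsilon_k}|^2\leq\Lambda(h_0,U_0)$, Cauchy--Schwarz gives
$$\int|d_n^{\varepsilon_k}|\leq\Big(\int 1/h_n^{\varepsilon_k}\Big)^{1/2}\Big(\int h_n^{\varepsilon_k}|d_n^{\varepsilon_k}|^2\Big)^{1/2},$$
producing a uniform $L^2_tL^1_x$ bound on $d_n^{\varepsilon_k}$, and similarly on $v_n^{\varepsilon_k}$. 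By the same argument, $|B_n^{\varepsilon_k}\times v_n^{\varepsilon_k}|\leq h_n^{\varepsilon_k}|b_n^{\varepsilon_k}||v_n^{\varepsilon_k}|$ is bounded in $L^2_tL^1_x$, since $\int h|b|^2$ is uniformly bounded in $L^\infty_t$ while $\int\!\int h|v|^2$ is bounded.

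Next I would prove the time-equicontinuity. Since $(h_n^{\varepsilon_k},B_n^{\varepsilon_k})$ satisfy \eqref{eq:ddap-1}--\eqref{eq:ddap-2} classically, for any $\phi\in C^1(\mathbb{T}^3)$ (scalar or vector) integration by parts and Cauchy--Schwarz in time give
$$|\langle h_n^{\varepsilon_k}(t)-h_n^{\varepsilon_k}(s),\phi\rangle|\leq\|\nabla\phi\|_\infty\|P_n^{\varepsilon_k}\|_{L^2_tL^1_x}|t-s|^{1/2},$$
$$|\langle B_n^{\varepsilon_k}(t)-B_n^{\varepsilon_k}(s),\phi\rangle|\leq\|\nabla\times\phi\|_\infty\big(\|B_n^{\varepsilon_k}\!\times\! v_n^{\varepsilon_k}\|_{L^2_tL^1_x}+\|d_n^{\varepsilon_k}\|_{L^2_tL^1_x}\big)|t-s|^{1/2},$$
with all right-hand norms uniformly bounded by a constant $C=C(T,h_0,B_0)$ through step one. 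By the definition \eqref{eq:ddpf-inapm} of the metric, this yields the required $C^{0,1/2}$ bound of $(h_n^{\varepsilon_k},B_n^{\varepsilon_k})$ with values in $C(\mathbb{T}^3,\mathbb{R}^4)'_{w^*}$.

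Finally, for each fixed $f_m$ in the countable dense family, $t\mapsto\langle h_n^{\varepsilon_k}(t),f_m\rangle$ is a uniformly bounded equi-H\"older family of real functions on $[0,T]$; Arzel\`a--Ascoli and a diagonal extraction over $\{f_m\}$ yield a subsequence along which all such pairings converge uniformly, giving $h_{n_i}^{\varepsilon_{n_i}}\to h$ in $C([0,T],C(\mathbb{T}^3,\mathbb{R})'_{w^*})$; the same argument handles $B_n^{\varepsilon_k}$, and the $C^{0,1/2}$ bound passes to the limit. For $(D_n^{\varepsilon_k},P_n^{\varepsilon_k})$, the $L^2_tL^1_x$ bound presents them as uniformly bounded vector-valued Borel measures on $[0,T]\times\mathbb{T}^3$ and Banach--Alaoglu delivers the weak-$*$ convergent subsequences in $C([0,T]\times\mathbb{T}^3,\mathbb{R}^3\times\mathbb{R}^3)'$; analogously the $L^\infty_tL^1_x$ bound on $h$ gives weak-$*$ convergence in $C([0,T]\times\mathbb{T}^3,\mathbb{R})'$. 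The main obstacle is really step one, the $L^1$ bound on $d_n^{\varepsilon_k}$ itself (not only on $D_n^{\varepsilon_k}$): this does not follow directly from the energy in \eqref{eq:ddpf-covc1} and requires unveiling the $\int 1/h$ bound hidden inside the energy functional.
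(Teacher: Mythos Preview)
Your proposal is correct and follows essentially the same route as the paper: uniform total-variation bounds from \lmref{lm:ddpf-cov1}, time-equicontinuity of $(h_n^{\varepsilon_k},B_n^{\varepsilon_k})$ against smooth test functions via the evolution equations \eqref{eq:ddap-1}--\eqref{eq:ddap-2}, Arzel\`a--Ascoli in the metrizable weak-$*$ topology for $(h,B)$, and Banach--Alaoglu for $(D,P)$. The only cosmetic difference is in the $B$-equicontinuity estimate: instead of first isolating $\int 1/h$ to bound $\|d\|_{L^2_tL^1_x}$ separately, the paper applies Cauchy--Schwarz directly to $\int_s^t\!\int (B\times v+d)\cdot(\nabla\times\phi)$ in the form $\big(\int\!\int(|P|^2+|D|^2)/h\big)^{1/2}\big(\int\!\int(|B|^2+1)/h\big)^{1/2}$, which packages your ``hidden'' $\int 1/h$ bound into the energy factor in one stroke.
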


\begin{proof}

For any smooth function $f\in C^{\infty}(\mathbb{T}^3,\mathbb{R})$, we have
\begin{equation}\label{eq:ddpf-cove1}
\begin{array}{r@{}l}
& \displaystyle{ \quad \left|\int_{\mathbb{T}^3}\big(h^{\varepsilon_k}_{n}(t,x)-h^{\varepsilon_k}_{n}(s,x)\big)f(x){\rm d}x\right|} \\
= & \displaystyle{\; \left|\int^{t}_{s}\int_{\mathbb{T}^3} P^{\varepsilon_k}_{n}(\sigma,x)\cdot\nabla f(x){\rm d}\sigma{\rm d}x\right| }\\
 \leq & \displaystyle{\;  \left(\int^{t}_{s}\int_{\mathbb{T}^3} \frac{|P^{\varepsilon_k}_{n}(\sigma,x)|^{2}}{h^{\varepsilon_k}_{n}(\sigma,x)}{\rm d}\sigma{\rm d}x\right)^{\frac{1}{2}}\left(\int^{t}_{s}\int_{\mathbb{T}^3}|\nabla f(x)|^{2}h^{\varepsilon_k}_{n}(\sigma,x){\rm d}\sigma{\rm d}x\right)^{\frac{1}{2}} }\\
 \leq & \displaystyle{\; \|\nabla f\|_{\infty}\Big(\Lambda(h_{0},U_{0})\big\langle h_{0},1 \big\rangle\Big)^{\frac{1}{2}}|t-s|^{\frac{1}{2}} }
\end{array}
\end{equation}

Besides, for any smooth function $\phi\in C(\mathbb{T}^3,\mathbb{R}^3)$
\begin{equation}\label{eq:ddpf-cove2}
\begin{array}{r@{}l}
& \displaystyle{\quad \left|\int_{\mathbb{T}^3}\big(B^{\varepsilon_k}_{n}(t,x)-B^{\varepsilon_k}_{n}(s,x)\big)\cdot \phi(x){\rm d}x\right|} \\
= & \displaystyle{\; \left|\int^{t}_{s}\int_{\mathbb{T}^3}\big(B^{\varepsilon_k}_{n} \times v^{\varepsilon_k}_{n} + d^{\varepsilon_k}_{n}\big)\cdot(\nabla\times\phi) \right| }\\
\leq & \displaystyle{\; \sqrt{2}\|\nabla\times\phi\|_{\infty}\left(\int^{t}_{s}\int_{\mathbb{T}^3} \frac{|P^{\varepsilon_k}_n|^{2}+|D^{\varepsilon_k}_n|^{2}}{h^{\varepsilon_k}_n}\right)^{\frac{1}{2}}\left(\int^{t}_{s}\int_{\mathbb{T}^3} \frac{|B^{\varepsilon_k}_n|^{2}+1}{h^{\varepsilon_k}_n}\right)^{\frac{1}{2}}}\\
\leq & \displaystyle{\; 2\|\nabla\times\phi\|_{\infty}\Lambda(h_{0},U_{0})|t-s|^{\frac{1}{2}} }
\end{array}
\end{equation}
From \eqref{eq:ddpf-covc1}, we can easily know that, for all $\varepsilon_k,\;n$ and $t$ ,the total variation of $(h^{\varepsilon_k}_n,B^{\varepsilon_k}_n)$ is bounded. By Banach-Alaoglu theorem, the closed ball $B_{R}(0)$ in $C(\mathbb{T}^3,\mathbb{R}^{4})'$ is compact with respect to the weak-$*$ topology. From \eqref{eq:ddpf-cove1},\eqref{eq:ddpf-cove2}, we know that $\{(h^{\varepsilon_n}_n,B^{\varepsilon_n}_n)\}_{n=1}^{\infty}$ is uniformly bounded in $C^{0,\frac{1}{2}}([0,T],C(\mathbb{T}^3,\mathbb{R}^{4})'_{w^{*}})$ by some constant that depends only on $T$ and $(h_0,B_0)$. So by Arzel\`{a}-Ascoli's theorem, we can extract a subsequence $\{(h^{\varepsilon_{n_i}}_{n_i},B^{\varepsilon_{n_i}}_{n_i})\}_{i=1}^{\infty}$ that converge to some measures denoted by $(h,B)$ in $C([0,T],C(\mathbb{T}^3,\mathbb{R}^{4})'_{w^{*}})$, and $(h,B)$ is bounded in $C^{0,\frac{1}{2}}([0,T],C(\mathbb{T}^3,\mathbb{R}^{4})'_{w^{*}})$ by the same constant. Besides, from \eqref{eq:ddpf-covc1}, we know that the total variations of $P^{\varepsilon_n}_n,D^{\varepsilon_n}_n$ is uniformly bounded in $C([0,T]\times\mathbb{T}^3,\mathbb{R}^3)'$, so we can extract a sub sequence that weakly converge to $P,D\in C([0,T]\times\mathbb{T}^3,\mathbb{R}^3)'$. So we get the conclusion.

\end{proof}

\subsection{The limit is a dissipative solution}

By \lmref{lm:ddpf-cov2}, we can extract a subsequence $(h^{\varepsilon_{n_i}}_{n_i},B^{\varepsilon_{n_i}}_{n_i},D^{\varepsilon_{n_i}}_{n_i},P^{\varepsilon_{n_i}}_{n_i})$ that converge strongly to a function $(h,B)$ in $C([0,T],C(\mathbb{T}^3,\mathbb{R}^{4})'_{w^{*}})$ and weakly-$*$ to $D,P$ in $C([0,T]\times\mathbb{T}^3,\mathbb{R}^3)'$. But it is not clear if these functions are dissipative solutions or not. In the following part, we will prove that $(h,B,D,P)$ satisfies all the requirements in \defref{def:dddf1}, thus it is indeed a dissipative solution of (DMHD) with the initial data $(h_{0},B_{0})$.

Firstly, we know that $(h^{\varepsilon_{n_i}}_{n_i},B^{\varepsilon_{n_i}}_{n_i})|_{t=0}=(h^{\varepsilon_{k_i}}_{0},B^{\varepsilon_{n_i}}_{0})$ converge weakly-$*$ to $(h_{0},B_{0})$, so $(h,B)|_{t=0}=(h_{0},B_{0})$.

Secondly, for any
$u\in C^1([0,T]\times\mathbb{T}^3,\mathbb{R})$ and $t\in[0,T]$, the limit $h,P$ satisfy \eqref{eq:dddf-1}. To prove this, for any $\delta>0$, let's find a smooth non increasing function on $[0,T]$ denoted by $\Theta_{\delta}$, such that $\Theta_{\delta}(s)=1,\;s\in[0,t]$, $\Theta_{\delta}(s)=0,\;s\in[t+\delta,T]$, $0\leq\Theta_{\delta}(s)\leq1,\;s\in[t,t+\delta]$. Because $(h^{\varepsilon_{n_i}}_{n_i},P^{\varepsilon_{n_i}}_{n_i})$ satisfies \eqref{eq:dd-h}, then we have
\begin{equation*}
\begin{array}{r@{}l}
& \displaystyle{\int^{T}_{0}\int\Theta_{\delta}(s)\Big[\partial_{s}u(s,x) h^{\varepsilon_{n_i}}_{n_i}(s,x) + \nabla u(s,x)\cdot P^{\varepsilon_{n_i}}_{n_i}(s,x)\Big]{\rm d}x{\rm d}s}\\ = & \displaystyle{ -\int^{t+\delta}_{t}\int\Theta'_{\delta}(s)u(s,x)h^{\varepsilon_{n_i}}_{n_i}(s,x){\rm d}x{\rm d}s - \int u(0,x)h^{\varepsilon_{n_i}}_{n_i}(0,x){\rm d}x}
\end{array}
\end{equation*}
Because $\forall s$, $\displaystyle{h^{\varepsilon_{n_i}}_{n_i}(s)\xrightarrow{w^{*}}h(s)}$, $P^{\varepsilon_{n_i}}_{n_i}\xrightarrow{w^{*}}P$ as $i\rightarrow \infty$ and the total variation of $h^{\varepsilon_{n_i}}_{n_i},\;P^{\varepsilon_{n_i}}_{n_i}$ is uniformly bounded, so by the weak-$*$ convergence and Lebesgue's dominated convergence theorem, let $i\rightarrow\infty$, we have
$$\int_0^T\int\Theta_{\delta}\Big(h\partial_{s}u + P\cdot\nabla u\Big)=- \int^{t+\delta}_{t}\int\Theta'_{\delta}(s)u(s)h(s) - \int u(0)h(0)$$
Now because $h\in C([0,T],C(\mathbb{T}^3,\mathbb{R})'_{w^{*}})$, so $\big\langle h(s),u(s)\big\rangle$ is a continuous function on $s$, then we let $\delta\rightarrow 0$, we have
$$-\int^{t+\delta}_{t}\Theta'_{\delta}(s)\big\langle h(s),u(s)\big\rangle{\rm d}s \longrightarrow \in u(t)h(t) $$
Because $\Theta_{\delta}\rightarrow\mathbbm{1}_{[0,t]}
$ for every $s\in [0,T]$, by Lebesgue's dominated convergence theorem, pass the limit $\delta\rightarrow 0$ on the left hand side, we finally get \eqref{eq:dddf-1}.

Moreover, because $\nabla\cdot B^{\varepsilon_{n_i}}_{n_i}(t)=0$. So for any $\phi\in C^{1}(\mathbb{T}^3,\mathbb{R})$, we have $\big\langle B^{\varepsilon_{n_i}}_{n_i}(t),\nabla\phi\big\rangle=0$. By taking the limit, we get $\big\langle B(t),\nabla\phi\big\rangle=0$. So \eqref{eq:dddf-2} is also satisfied.

At last, we will prove that $(h,B,D,P)$ satisfies \eqref{eq:dddf-3}. We first suppose that for fixed $N$, $0<h^{*}\in C^1([0,T]\times\mathbb{T}^3,\mathbb{R})$, $b^{*}\in C^1([0,T]\times\mathbb{T}^3,\mathbb{R}^3)$, $v^{*},d^{*}\in C^1([0,T],X_N)$ and $r$ is a big number such that $Q_r(w^*)$ is positive definite for all $t,x$. Here $Q$ is defined in \eqref{eq:ddm-1}. Now, let us denote
$$U=\big(\mathcal{L},B\big),\;U_i=\big(\mathcal{L},B^{\varepsilon_{n_i}}_{n_i}\big)\in C([0,T],C(\mathbb{T}^3,\mathbb{R}^{4})')$$
$$W=\big(U,D,P\big),\;W_i=\big(U_i,D^{\varepsilon_{n_i}}_{n_i},P^{\varepsilon_{n_i}}_{n_i}\big)\in L^{2}([0,T],C(\mathbb{T}^3,\mathbb{R}^{10})')$$
$$\widetilde{U}=U-Vh=\left(\mathcal{L}-h{h^{*}}^{-1},B-hb^{*}\right),\;\widetilde{U}_i=U_i-Vh^{\varepsilon_{n_i}}_{n_i}$$
$$\widetilde{W}=W-Fh{\rm d}s=\left(\widetilde{U}, D-hd^{*},P-hv^{*}\right),\;\widetilde{W}_i=W_i-Fh^{\varepsilon_{n_i}}_{n_i}$$
$$V=\big({h^{*}}^{-1},b^{*}\big),\;F=\big({h^{*}}^{-1},b^{*},d^{*},v^{*}\big)$$
$$E_i(t)=\int\frac{1}{2h^{\varepsilon_{n_i}}_{n_i}}\Big\{
(B^{\varepsilon_{n_i}}_{n_i}-h^{\varepsilon_{n_i}}_{n_i}b^{*})^2+\left(1-\frac{h^{\varepsilon_{n_i}}_{n_i}}{h^{*}}\right)^2
+\varepsilon_{n_i}\left[(D^{\varepsilon_{n_i}}_{n_i}-h^{\varepsilon_{n_i}}_{n_i}d^{*})^2+(P^{\varepsilon_{n_i}}_{n_i}
-h^{\varepsilon_{n_i}}_{n_i}v^{*})^2\right]\Big\}$$
Now, because $(h^{\varepsilon_{n_i}}_{n_i},B^{\varepsilon_{n_i}}_{n_i},v^{\varepsilon_{n_i}}_{n_i},d^{\varepsilon_{n_i}}_{n_i})$ is a some kind of ``solution'' to \eqref{eq:ddap-1}-\eqref{eq:ddap-4}, we have that for any $\varphi,\psi\in X_{n_i}$, any $g\in C^1(\mathbb{T}^3),\;\phi\in C^1(\mathbb{T}^3,\mathbb{R}^3)$,
\begin{equation*}
\int \partial_th^{\varepsilon_{n_i}}_{n_i}(t)g   - \int h^{\varepsilon_{n_i}}_{n_i}(t)v^{\varepsilon_{n_i}}_{n_i}(t)\cdot\nabla g =0
\end{equation*}
\begin{equation*}
\int \partial_t B^{\varepsilon_{n_i}}_{n_i}(t)\cdot\phi  - \int (B^{\varepsilon_{n_i}}_{n_i}\otimes v^{\varepsilon_{n_i}}_{n_i}-v^{\varepsilon_{n_i}}_{n_i}\otimes B^{\varepsilon_{n_i}}_{n_i}):\nabla\phi + \int d^{\varepsilon_{n_i}}_{n_i}\cdot(\nabla\times\phi)=0
\end{equation*}
\begin{multline*}
\int \partial_t\big(h^{\varepsilon_{n_i}}_{n_i}(t)v^{\varepsilon_{n_i}}_{n_i}(t)\big)\cdot\varphi -\int h^{\varepsilon_{n_i}}_{n_i}v^{\varepsilon_{n_i}}_{n_i}\otimes v^{\varepsilon_{n_i}}_{n_i}:\nabla\varphi -\int\big[(h^{\varepsilon_{n_i}}_{n_i} d^{\varepsilon_{n_i}}_{n_i}\cdot\nabla) d^{\varepsilon_{n_i}}_{n_i}\big]\cdot\varphi \\+ \int\nabla^lv^{\varepsilon_{n_i}}_{n_i}:\nabla^l\varphi   + \varepsilon_{n_i}^{-1}\int\left[{h^{\varepsilon_{n_i}}_{n_i}}^{-1}(B^{\varepsilon_{n_i}}_{n_i}\otimes B^{\varepsilon_{n_i}}_{n_i} +I_3):\nabla\varphi + h^{\varepsilon_{n_i}}_{n_i}v^{\varepsilon_{n_i}}_{n_i}\cdot\varphi\right]=0
\end{multline*}
\begin{multline*}
\int \partial_t\big(h^{\varepsilon_{n_i}}_{n_i}(t)d^{\varepsilon_{n_i}}_{n_i}(t)\big)\cdot\psi  - \int h^{\varepsilon_{n_i}}_{n_i}(d^{\varepsilon_{n_i}}_{n_i}\otimes v^{\varepsilon_{n_i}}_{n_i}-v^{\varepsilon_{n_i}}_{n_i}\otimes d^{\varepsilon_{n_i}}_{n_i}):\nabla\psi \\+\int\nabla^ld^{\varepsilon_{n_i}}_{n_i}:\nabla^l\psi  + \varepsilon_{n_i}^{-1}\int\big[-b^{\varepsilon_{n_i}}_{n_i}\cdot(\nabla\times\psi) + h^{\varepsilon_{n_i}}_{n_i}d^{\varepsilon_{n_i}}_{n_i}\cdot\psi \big]=0
\end{multline*}
For $n_i\geq N$, we can shoose $\phi=b^{\varepsilon_{n_i}}_{n_i}-b^*$, $\psi=\varepsilon_{n_i}\big(d^{\varepsilon_{n_i}}_{n_i}-d^*\big)$, $\varphi=\varepsilon_{n_i}\big(v^{\varepsilon_{n_i}}_{n_i}-v^*\big)$, and
$$g=\frac{1}{2}\Big[|h^{*}|^{-2}+|b^{*}|^2-|h^{\varepsilon_{n_i}}_{n_i}|^{-2}-|b^{\varepsilon_{n_i}}_{n_i}|^2+
\varepsilon_{n_i}\big(|v^{*}|^2+|d^{*}|^2-|v^{\varepsilon_{n_i}}_{n_i}|^2 -|d^{\varepsilon_{n_i}}_{n_i}|^2\big)
\Big]$$
With the specific chosen test function, we can get that (after a long progress of computation, we skip the tedious part here)
\begin{multline}\label{eq:ddpf-enin1}
\displaystyle{\frac{{\rm d}}{{\rm d}t}E_i(t) + \int\frac{\widetilde{W}_i^{{\rm T}}Q(w^*)\widetilde{W}_i}{2h^{\varepsilon_{n_i}}_{n_i}} + \int \widetilde{W}_i\cdot \mathrm{L}(w^*) -\varepsilon_{n_i}\widetilde{R}_i(t)}  \\ \displaystyle{= - \varepsilon_{n_i}\int\Big(
\big|\nabla^ld^{\varepsilon_{n_i}}_{n_i}\big|^2 + \big|\nabla^lv^{\varepsilon_{n_i}}_{n_i}\big|^2\Big) \leq 0}
\end{multline}
Here
\begin{equation*}
\begin{array}{r@{}l}
\widetilde{R}_i(t) &\displaystyle{=\int\frac{D^{\varepsilon_{n_i}}_{n_i}\otimes D^{\varepsilon_{n_i}}_{n_i} - P^{\varepsilon_{n_i}}_{n_i}\otimes P^{\varepsilon_{n_i}}_{n_i}}{2h^{\varepsilon_{n_i}}_{n_i}}:\big(\nabla v^{*}+{\nabla v^{*}}^{{\rm T}}\big)+ \int \frac{P^{\varepsilon_{n_i}}_{n_i}\otimes D^{\varepsilon_{n_i}}_{n_i}}{h^{\varepsilon_{n_i}}_{n_i}}:\big( \nabla d^{*}- {\nabla d^{*}}^{{\rm T}}\big)  }\\
& \displaystyle{  +\int\left(\nabla\left(\frac{|v^{*}|^2+|d^{*}|^2}{2}\right)-\partial_{t}v^{*}\right)\cdot P^{\varepsilon_{n_i}}_{n_i} -\int \partial_{t}d^{*}\cdot D^{\varepsilon_{n_i}}_{n_i} + \int h^{\varepsilon_{n_i}}_{n_i} \partial_{t}\left(\frac{|v^{*}|^2+|d^{*}|^2}{2}\right) }\\
& \displaystyle{ + \int\big[\nabla^{l}v^{\varepsilon_{n_i}}_{n_i}:\nabla^{l}v^{*}+\nabla^{l}d^{\varepsilon_{n_i}}_{n_i}:\nabla^{l}d^{*}\big] - \int D^{\varepsilon_{n_i}}_{n_i}\cdot\nabla\big(d^{\varepsilon_{n_i}}_{n_i}\cdot v^{*}\big) }
\end{array}
\end{equation*}
Then, for any $r$ such that $Q_r(w^*)>0$ for all $t,x$, we have
\begin{equation*}
\left(\frac{{\rm d}}{{\rm d}t}-r\right)E_i(t) + \int\frac{{\widetilde{W}_i}^{{\rm T}}Q_r(w^*)\widetilde{W}_i}{2h^{\varepsilon_{n_i}}_{n_i}} + \int \widetilde{W}_i\cdot \mathrm{L}(w^*)-\varepsilon_{n_i}\widetilde{R}_{i,r}(t)\leq 0
\end{equation*}
$$\widetilde{R}_{i,r}=\widetilde{R}_{i} - r\int\frac{h^{\varepsilon_{n_i}}_{n_i}}{2}\big[(d^{\varepsilon_{n_i}}_{n_i}-d^{*})^2+(v^{\varepsilon_{n_i}}_{n_i}-v^{*})^2\big]$$
So we have
\begin{equation*}
e^{-rt}E_i(t)+ \int^{t}_{0}e^{-rs}\left[\int\frac{{\widetilde{W}_i}^{{\rm T}}Q_r(w^*)\widetilde{W}_i}{2h^{\varepsilon_{n_i}}_{n_i}} + \int \widetilde{W}_i\cdot \mathrm{L}(w^*) -\varepsilon_{n_i}\widetilde{R}_{i,r}(s)\right]{\rm d}s \leq E_i(0)
\end{equation*}
Because $E_i(t)\geq\Lambda(h^{\varepsilon_{n_i}}_{n_i}(t),\widetilde{U}_i)$, then we have that
\begin{multline}\label{eq:ddpf-fenc1}
\displaystyle{e^{-rt}\Lambda(h^{\varepsilon_{n_i}}_{n_i}(t),\widetilde{U}_i(t)) +  \widetilde{\Lambda}(h^{\varepsilon_{n_i}}_{n_i},\widetilde{W}_i,e^{-rs}Q_r(w^*);0,t)}\\ \displaystyle{+ \int^{t}_{0}e^{-rs}\left(\int\widetilde{W}_i\cdot \mathrm{L}(w^*) - \varepsilon_{n_i}\widetilde{R}_{i,r}(s)\right){\rm d}s
\leq E_i(0)}
\end{multline}
Notice that $h^{\varepsilon_{n_i}}_{0}\xrightarrow{w^{*}}h_{0}$ in $C(\mathbb{T}^3,\mathbb{R})'$, and $$E_i(0)=\Lambda(h^{\varepsilon_{n_i}}_{0},U^{\varepsilon_{n_i}}_{0}) + \big\langle h^{\varepsilon_{n_i}}_{0}, \frac{1}{2}|V(0)|^2\big\rangle - \big\langle U^{\varepsilon_{n_i}}_{0},V(0)\big\rangle+ \varepsilon_{n_i}\big\langle h^{\varepsilon_{n_i}}_{0}, \frac{|v^{*}|^2+|d^{*}|^2}{2}\big\rangle$$
By \propref{prop:ddpf1}, we know that the right hand side of \eqref{eq:ddpf-fenc1} $E_i(0)\rightarrow \Lambda(h_{0},\widetilde{U}_{0})$ as $\varepsilon_{n_i}\rightarrow0$. By \lmref{lm:ddpf-cov1}, we know that $\sqrt{\varepsilon_{n_i}}\nabla^{l}v^{\varepsilon_{n_i}}_{n_i},\sqrt{\varepsilon_{n_i}}\nabla^{l}d^{\varepsilon_{n_i}}_{n_i}$ are uniformly bounded in $L^{2}_{t,x}$, thus $\sqrt{\varepsilon_{n_i}}v^{\varepsilon_{n_i}}_{n_i},\sqrt{\varepsilon_{n_i}}d^{\varepsilon_{n_i}}_{n_i}$ are uniformly bounded in $L^2(W^{1,\infty})$. Moreover $P^{\varepsilon_{n_i}}_{n_i},D^{\varepsilon_{n_i}}_{n_i},h^{\varepsilon_{n_i}}_{n_i}$ are uniformly bounded in $L^{2}_{t}(L^{1}_{x})$,  so we have that
$$\varepsilon_{n_i}\left|\int^{t}_{0}e^{r(t-s)}\widetilde{R}_{i,r}(s){\rm d}s\right|\leq\sqrt{\varepsilon_{n_i}}(1+\sqrt{\varepsilon_{n_i}})(1+|r|)C$$
Here $C$ only depends on $v^{*},b^{*},d^{*},h_{0},B_{0}$. So it goes to 0 as $\varepsilon_{n_i}\rightarrow0$. By the weak-* convergent of $P^{\varepsilon_{n_i}}_{n_i},D^{\varepsilon_{n_i}}_{n_i}\in C_{t,x}'$ and similar method as we did for \eqref{eq:dddf-1}, we have
$$\int_0^t\int e^{-rs}\widetilde{W}_i\cdot \mathrm{L}(w^*)\rightarrow \int_0^t\int e^{-rs}\widetilde{W}\cdot \mathrm{L}(w^*) $$
Besides, we have that,
$$
\liminf_{i\rightarrow\infty}\Lambda(h^{\varepsilon_{n_i}}_{n_i}(t),\widetilde{U}_i(t))  \geq \Lambda(h(t),\widetilde{U}(t))
$$
$$
\liminf_{i\rightarrow\infty}\widetilde{\Lambda}(h^{\varepsilon_{n_i}}_{n_i},\widetilde{W}_i,e^{-rs}Q_r(w^*);0,t)\geq \widetilde{\Lambda}(h,\widetilde{W},e^{-rs}Q_r(w^*);0,t)
$$
Combining the above results, we take the lower limit on both side of \eqref{eq:ddpf-fenc1}, then we can just get the inequality \eqref{eq:dddf-3} for all fixed $N$, $0<h^{*}\in C^1([0,T]\times\mathbb{T}^3,\mathbb{R})$, $b^{*}\in C^1([0,T]\times\mathbb{T}^3,\mathbb{R}^3)$, $v^{*},d^{*}\in C^1([0,T],X_N)$. Now for any $v^{*},d^{*}\in C^{1}([0,T]\times\mathbb{T}^3,\mathbb{R}^3)$ and $r$ such that $Q_r(w^*)$ is positive definite, because $v^{*},d^{*}$ is continuous, then there exist $r'<r$ such that $Q_{r'}(w^*)$ is still positive definite. Because $\bigcup_{n=1}^{\infty} C^1([0,T],X_n)$ is dense in $C^{1}([0,T]\times\mathbb{T}^3,\mathbb{R}^3)$, So we can find a sequence $\{v^{*}_{n}\},\{d^{*}_{n}\}\in C^1([0,T],X_n)$ that converge to $v^{*},d^{*}$ in $C^{1}([0,T]\times\mathbb{T}^3,\mathbb{R}^3)$ and $Q_r(w_n^*)$ is always positive definite, where $w_n^*=({h^*}^{-1},b^*,d_n^*,v_n^*)$. Now let us denote
$$\widetilde{W}_{n}=\widetilde{W}+\widetilde{F}_{n}h,\;\;\;\widetilde{F}_{n}=\big( 0,0,d^{*}-d^{*}_{n},v^{*}-v^{*}_{n}\big)$$
By Lebesgue's dominated convergence theorem, we have that
$$\int_0^t\int e^{-rs}\widetilde{W}_n\cdot \mathrm{L}(w^*_n)\rightarrow \int_0^t\int e^{-rs}\widetilde{W}\cdot \mathrm{L}(w^*) $$
Besides, we have
\begin{multline}\label{eq:pf-1}
\widetilde{\Lambda}(h,\widetilde{W}_{n},e^{-rs}Q_r(w_n^*);0,t)=\widetilde{\Lambda}(h,\widetilde{W},e^{-rs}Q_r(w_n^*);0,t) \\ + \int_{0}^{t}e^{-rs}\big\langle \widetilde{W},Q_{n}\widetilde{F}_{n}\big\rangle + \int_{0}^{t}e^{-rs}\big\langle h,\frac{1}{2}|\sqrt{Q_{n}}\widetilde{F}_{n}|^2\big\rangle
\end{multline}
Now we would like to take the limit $n\rightarrow\infty$. The following lemma will be useful:

\begin{lemma}
Suppose $Q_{n},Q\in C([0,T]\times\mathbb{T}^3,\mathbb{R}^{d^2})$ are positive definite, $\|Q_{n}- Q\|_{\infty}\rightarrow 0$ as $n\rightarrow\infty$, then $\displaystyle{ \liminf_{n\rightarrow\infty}\widetilde{\Lambda}(\rho,W,Q_n;0,t)\geq \widetilde{\Lambda}(\rho,W,Q;0,t) }$.
\end{lemma}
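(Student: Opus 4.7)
The plan is to prove lower semicontinuity by the standard duality trick: given any admissible competitor $(a,A)$ for $Q$, build a nearby admissible competitor $(a_n,A_n)$ for $Q_n$ whose integral against $(\rho,W)$ converges to that of $(a,A)$. Recall that $(a,A)\in C([0,t]\times\mathbb{T}^3;\mathbb{R}\times\mathbb{R}^d)$ is admissible for $Q$ iff $a+\tfrac12|\sqrt{Q^{-1}}A|^2\le 0$ pointwise.

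\medskip

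The key preliminary step is the uniform convergence of the inverse square roots. Since $Q$ is continuous and pointwise positive definite on the compact set $[0,t]\times\mathbb{T}^3$, there exists $c>0$ with $Q\ge cI$ everywhere, and from $\|Q_n-Q\|_\infty\to 0$ one gets $Q_n\ge \tfrac{c}{2}I$ for all $n$ large. On the set of symmetric matrices with spectrum in $[\tfrac{c}{2},\infty)$ the maps $M\mapsto M^{-1}$ and $M\mapsto\sqrt{M}$ are locally Lipschitz (analytic functional calculus, or an explicit contour/Dunford–Taylor representation), so $\sqrt{Q_n^{-1}}\to\sqrt{Q^{-1}}$ uniformly on $[0,t]\times\mathbb{T}^3$.

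\medskip

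Given any admissible $(a,A)$ for $Q$, set $A_n=A$ and
\begin{equation*}
a_n \;=\; a \;+\; \tfrac12\bigl|\sqrt{Q^{-1}}A\bigr|^2 \;-\; \tfrac12\bigl|\sqrt{Q_n^{-1}}A\bigr|^2.
\end{equation*}
Then $a_n+\tfrac12|\sqrt{Q_n^{-1}}A|^2 = a+\tfrac12|\sqrt{Q^{-1}}A|^2\le 0$, so $(a_n,A_n)$ is admissible for $Q_n$. By the uniform convergence $\sqrt{Q_n^{-1}}\to\sqrt{Q^{-1}}$ and the continuity of $A$, we have $a_n\to a$ uniformly on $[0,t]\times\mathbb{T}^3$. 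Since $\rho$ is a bounded Borel measure and $A_n\equiv A$, the uniform convergence yields
\begin{equation*}
\int_0^t\!\!\int_{\mathbb{T}^3} a_n\rho + A_n\cdot W \;\longrightarrow\; \int_0^t\!\!\int_{\mathbb{T}^3} a\rho + A\cdot W .
\end{equation*}
Therefore, for every admissible $(a,A)$ for $Q$,
\begin{equation*}
\liminf_{n\to\infty}\widetilde{\Lambda}(\rho,W,Q_n;0,t) \;\ge\; \int_0^t\!\!\int_{\mathbb{T}^3} a\rho + A\cdot W ,
\end{equation*}
and taking the supremum over admissible $(a,A)$ gives the claim.

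\medskip

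The only non-routine point is the uniform continuity of the square-root/inverse on the cone of uniformly positive definite symmetric matrices; everything else is a one-line admissibility check and the use of uniform convergence against measures of finite total variation, so I do not anticipate any serious obstacle beyond this standard matrix-analysis lemma.
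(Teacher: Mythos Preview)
Your proof is correct and follows essentially the same route as the paper: both define the modified test function $a_n=a+\tfrac12|\sqrt{Q^{-1}}A|^2-\tfrac12|\sqrt{Q_n^{-1}}A|^2$ (with $A$ unchanged) to obtain an admissible competitor for $Q_n$, then use uniform convergence of $Q_n^{-1}\to Q^{-1}$ together with the finite total variation of $\rho$ to pass to the limit. Your additional justification of the uniform convergence of $\sqrt{Q_n^{-1}}$ via a uniform lower spectral bound is a welcome elaboration that the paper leaves implicit.
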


\begin{proof}
The proof is quite straightforward. For any $a\in C([0,T]\times\mathbb{T}^3,\mathbb{R}),A\in C([0,T]\times\mathbb{T}^3,\mathbb{R}^d)$ such that $a+\frac{1}{2}|\sqrt{Q^{-1}}A|^2\leq 0$, we have
$$\int_0^t\int\Big(a\rho + A\cdot W\Big)= \int_0^t\int\Big(\widetilde{a}\rho + A\cdot W\Big) + \int_0^t\int(a-\widetilde{a})\rho$$
where
$$\widetilde{a}= a + \frac{1}{2}|\sqrt{Q^{-1}}A|^2-\frac{1}{2}|\sqrt{Q_{n}^{-1}}A|^2 $$
Since $\widetilde{a} + \frac{1}{2}|\sqrt{Q_{n}^{-1}}A|^2\leq 0$, we have
$$
\int_0^t\int\Big(a\rho + A\cdot W\Big) \leq \widetilde{\Lambda}(\rho,W,Q_n;0,t) + \frac{1}{2}\|\rho\|_{TV}\left\||\sqrt{Q_{n}^{-1}}A|^2-|\sqrt{Q^{-1}}A|^2\right\|_{\infty}
$$
Take the lower limit on both sides, we have, for any $(a,A)$ s.t. $a+\frac{1}{2}|\sqrt{Q^{-1}}A|^2\leq 0$,
$$\int_0^t\int\Big(a\rho + A\cdot W\Big) \leq  \liminf_{n\rightarrow\infty}\widetilde{\Lambda}(\rho,W,Q_n;0,t)$$
So we have $\displaystyle{ \widetilde{\Lambda}(\rho,W,Q;0,t)\leq  \liminf_{n\rightarrow\infty}\widetilde{\Lambda}(\rho,W,Q_n;0,t)}$.
\end{proof}

Now, by taking the lower limit as $n\rightarrow\infty$ in \eqref{eq:pf-1}, we can get that \eqref{eq:dddf-3} is valid for $C^{1}$ functions. So we have completely proved the existence of a dissipative solution. We summarize our result in the following theorem.

\begin{theorem}
Suppose that $B_{0}\in C(\mathbb{T}^3,\mathbb{R}^3)',\;h_{0}\in C(\mathbb{T}^3,\mathbb{R})'$, satisfying that $\nabla\cdot B_{0}=0$ in the sense of distributions and $\Lambda(h_{0},U_{0})<\infty$, where $U_{0}=(\mathcal{L},B_{0})$. Then there exists a dissipative solution $(h,B,D,P)$ of (DMHD) with initial value $(h,B)|_{t=0}=(h_{0},B_{0})$.

\end{theorem}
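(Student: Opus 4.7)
The plan is to construct a dissipative solution as a weak-$*$ limit of approximate solutions, via a three-layered approximation: first mollify the initial data $(h_0, B_0)$ by convolution with $\rho_{\varepsilon_k}$ to obtain smooth $(h_0^{\varepsilon_k}, B_0^{\varepsilon_k})$ (using the construction and bounds from \propref{prop:ddpf1}, which in particular ensures $\Lambda(h_0^{\varepsilon_k}, U_0^{\varepsilon_k}) \le \Lambda(h_0, U_0)$ and convergence to $\Lambda(h_0, U_0)$); second, regularize the PDE by the $\varepsilon$-dependent system \eqref{eq:ddap-1}-\eqref{eq:ddap-3} which introduces artificial time derivatives for $(d,v)$ and an $l$-th order diffusion; third, project the equations for $(d,v)$ onto the finite-dimensional space $X_{n}$ and invoke \thmref{thm:ddfg1} to obtain a Faedo-Galerkin solution $(h_n^{\varepsilon_k}, B_n^{\varepsilon_k}, d_n^{\varepsilon_k}, v_n^{\varepsilon_k})$ for each pair $(n,k)$. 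The plan is then to couple $n$ and $k$ along the diagonal $\varepsilon_{n_i}, n_i$ and pass to the limit $i \to \infty$.

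The uniform estimates come from the identity \eqref{eq:ddapin-1}, which for the mollified initial data is bounded by $\Lambda(h_0, U_0)$; combined with Cauchy-Schwarz, this yields the bounds of \lmref{lm:ddpf-cov1} for $(h_n^{\varepsilon_k}, B_n^{\varepsilon_k})$ in $L^\infty_t(L^1_x)$ and for $(D_n^{\varepsilon_k}, P_n^{\varepsilon_k})$ in $L^2_t(L^1_x)$, together with $\sqrt{\varepsilon_k}$-smallness of $\nabla^l d_n^{\varepsilon_k}$, $\nabla^l v_n^{\varepsilon_k}$ in $L^2_{t,x}$. Using the weak formulation of \eqref{eq:dd-h} and \eqref{eq:dd-B} (which are tested by arbitrary smooth functions, not just elements of $X_n$) and the $L^2_t(L^1_x)$ bound on $(D,P)$, one obtains a uniform $C^{0,1/2}_t$ estimate of $(h_n^{\varepsilon_k}, B_n^{\varepsilon_k})$ valued in $C(\mathbb{T}^3, \mathbb{R}^4)'_{w^*}$. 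Banach-Alaoglu together with Arzelà-Ascoli then extracts a diagonal subsequence converging strongly in $C([0,T], C(\mathbb{T}^3)'_{w^*})$ for $(h,B)$ and weakly-$*$ in $C([0,T]\times\mathbb{T}^3)'$ for $(D,P)$, as in \lmref{lm:ddpf-cov2}. Passing to the limit in the linear equations \eqref{eq:dddf-1}, \eqref{eq:dddf-2} is straightforward by duality.

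The main obstacle is recovering the dissipative inequality \eqref{eq:dddf-3} at the limit. The plan is to reproduce the computation of \lmref{lm:dd-entropy} but at the approximate level, choosing the test functions $\phi = b_n^{\varepsilon_k} - b^*$, $\psi = \varepsilon_{n_i}(d_{n_i}^{\varepsilon_{n_i}} - d^*)$, $\varphi = \varepsilon_{n_i}(v_{n_i}^{\varepsilon_{n_i}} - v^*)$ in the Galerkin formulation (this requires $v^*, d^* \in C^1([0,T], X_N)$ with $N \le n_i$), together with an explicit choice of the scalar test function for $h$. Elementary but tedious algebra gives an identity of the form \eqref{eq:ddpf-enin1}, with the right-hand side being a nonpositive dissipation term $-\varepsilon_{n_i}\int(|\nabla^l d_{n_i}^{\varepsilon_{n_i}}|^2 + |\nabla^l v_{n_i}^{\varepsilon_{n_i}}|^2)$ and a remainder $\varepsilon_{n_i}\widetilde{R}_{i,r}(t)$ that is $O(\sqrt{\varepsilon_{n_i}})$ by the a priori bounds. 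The hardest part is controlling this remainder and showing that the $\varepsilon_{n_i}$-dependent terms (in both the energy and the evolution identity) vanish uniformly on compact time intervals, while simultaneously exploiting \emph{lower semicontinuity} of the convex functionals $\Lambda$ and $\widetilde{\Lambda}$ along weak-$*$ converging sequences (which is immediate from the duality characterization \eqref{eq:dddfl}, \eqref{eq:dddft}). After these steps, \eqref{eq:dddf-3} holds for all $h^*, b^* \in C^1$, $v^*, d^* \in C^1([0,T], X_N)$, with $r$ large enough that $Q_r(w^*) \ge I_{10}$.

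Finally, to remove the Galerkin restriction on $(v^*, d^*)$, one uses a density argument: given arbitrary $v^*, d^* \in C^1([0,T]\times\mathbb{T}^3, \mathbb{R}^3)$ and $r$ making $Q_r(w^*) \ge I_{10}$, approximate by sequences $(v_m^*, d_m^*) \in C^1([0,T], X_m)$ converging in $C^1$. Thanks to uniform positivity of $Q_r(w_m^*)$ and continuity of the symbol $L(w^*)$ in its arguments, each term in \eqref{eq:dddf-3} passes to the limit, using Lebesgue's dominated convergence for the linear $L$-term and the lower semicontinuity lemma (a small auxiliary lemma I would state explicitly) for $\widetilde{\Lambda}$ with uniformly converging positive-definite matrix-weights. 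Combining all of the above yields \eqref{eq:dddf-3} for every admissible test quadruple and completes the proof that $(h, B, D, P)$ is a dissipative solution with initial data $(h_0, B_0)$.
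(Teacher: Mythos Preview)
Your proposal is correct and follows essentially the same approach as the paper: mollification of the initial data via \propref{prop:ddpf1}, Faedo-Galerkin approximation via \thmref{thm:ddfg1}, diagonal extraction with the uniform bounds of \lmref{lm:ddpf-cov1} and the compactness of \lmref{lm:ddpf-cov2}, the relative-entropy computation at the approximate level with the very same choice of test functions yielding \eqref{eq:ddpf-enin1}, the $O(\sqrt{\varepsilon_{n_i}})$ remainder estimate, lower semicontinuity of $\Lambda,\widetilde{\Lambda}$, and the final $C^1$-density argument together with the auxiliary lower-semicontinuity lemma for $\widetilde{\Lambda}$ under uniformly converging positive-definite weights. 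There is no substantive difference.
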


\section{Appendix: proof of \lmref{lm:dd-entropy}}

Let us consider a more general case where $(h,B,D,P)$ only satisfies the continuity equation \eqref{eq:dd-h} and the divergence-free constraint \eqref{eq:dd-div}. We denote
$$
\phi=\partial_{t}B +\nabla\times\left(\frac{D + B \times P}{h}\right),
$$
$$
\psi=D - \nabla\times\left(\frac{B}{h}\right)
,\;\;\;
\varphi=P - \nabla\cdot\left(\frac{B\otimes B}{h}\right)-\nabla\left(\frac{1}{h}\right),
$$
Note that $\phi,\psi,\varphi$ vanish when $(h,B,D,P)$ is exactly a solution to the Darcy MHD \eqref{eq:dd-h}-\eqref{eq:dd-div}. We also use the non-conservative variables, namely,
$$
\tau=\frac{1}{h},\;b=\frac{B}{h},\;d=\frac{D}{h},\;v=\frac{P}{h}
$$
and, for the convenience of writing, let's denote
$$U=\big(1,B\big),\;\;u^*=({h^*}^{-1},b^*),\;\;W=(1,B,D,P),\;\;w^*=({h^*}^{-1},b^*,d^*,v^*).
$$

To prove the lemma, let's start with computing the time derivative of the energy
$$
S(t)=\int_{\mathbb{T}^3}\frac{|U|^2}{2h}=\int_{\mathbb{T}^3}\frac{1+B^2}{2h}
$$
Quite similar to \eqref{eq:dde1}, we have
\begin{equation}\label{eq:apdx-1}
\begin{array}{r@{}l}
\displaystyle{S'(t)} & = \displaystyle{\int b\cdot\partial_{t}B  -\int\frac{1}{2}(\tau^2+ b^2)\partial_{t}h}\\
& = \displaystyle{ \int b\cdot\big[\phi-\nabla\times(B\times v +d)\big] + \int\frac{1}{2}(\tau^2+ b^2)\nabla\cdot P  }\\
& = \displaystyle{ \int b\cdot\phi-\int (B\times v +d)\cdot(\nabla\times b)-\int \big[\tau\nabla\tau + \nabla (b^2/2)\big]\cdot P  }\\
& = \displaystyle{ \int b\cdot\phi-\int d\cdot(\nabla\times b)-\int v\cdot\big(\nabla\cdot(b\otimes B) + \nabla\tau\big)  }\\
& = \displaystyle{\int\big( b\cdot\phi + d\cdot \psi + v\cdot\varphi\big)-\int\frac{D^2+P^2}{h}}
\end{array}
\end{equation}
Now, let's look at the relative entropy. Since $\widetilde{U}=U-hu^*$, we have
\begin{equation*}
\int\frac{\big|\widetilde{U}\big|^2}{2h}=S(t) + \int\frac{h\big|u^*\big|^2}{2} -\int U\cdot u^*
\end{equation*}
Therefore, we have
\begin{equation}\label{eq:apdx-2}
\begin{array}{r@{}l}
\displaystyle{\frac{{\rm d}}{{\rm d}t}\int\frac{\big|\widetilde{U}\big|^2}{2h} }
& = \displaystyle{S'(t) + \int\frac{\partial_t h}{2}\big|u^*\big|^2 + \int h u^*\cdot\partial_t u^* -\int\partial_tU\cdot u^* -\int U\cdot\partial_t u^* } \\
& = \displaystyle{ S'(t)- \int \frac{\nabla\cdot P}{2}\big|u^*\big|^2 - \int\partial_t B\cdot b^* -\int\widetilde{U}\cdot\partial_t u^* }\\
& \displaystyle{ = S'(t) + \int \frac{P}{2}\cdot\nabla\big|u^*\big|^2 - \int b^*\cdot\big[\phi-\nabla\times(B\times v +d)\big] -\int \widetilde{U}\cdot\partial_t u^*}
\end{array}
\end{equation}
Now let's use a small trick to write 0 as,
\begin{equation*}
\begin{array}{r@{}l}
\displaystyle{0\; }
& = \displaystyle{ \int \Big[d^*\cdot(D-\nabla\times b-\psi) + v^*\cdot(P-\nabla\cdot(b\otimes B)-\nabla\tau-\varphi)\Big]} \\
& = \displaystyle{ \int \big(D\cdot d^* +P\cdot v^*-d^*\cdot \psi-v^*\cdot\varphi\big) + \int\left[\sum_{i,j=1}^3\frac{\partial_jv^*_i}{h}B_iB_j + \frac{\nabla\cdot v^*}{h} -\frac{\big(\nabla\times d^*\big)\cdot B}{h}  \right] }
\end{array}
\end{equation*}
Then by \eqref{eq:apdx-1},\eqref{eq:apdx-2}, we have,
\begin{equation}
\begin{array}{r@{}l}
\displaystyle{\frac{{\rm d}}{{\rm d}t}\int\frac{\big|\widetilde{U}\big|^2}{2h} }
& = \displaystyle{ \sum_{i,j=1}^3\int\Big[\frac{\partial_jv^*_i}{h}B_iB_j- \frac{\partial_jb^*_i-\partial_ib^*_j}{h}B_iP_j \Big] -\int\frac{D^2+P^2}{h} + \int\frac{\nabla\cdot v^*}{h} }\\
& \displaystyle{ \qquad + \int\Big[\frac{\big(\nabla\times b^*\big)\cdot D}{h}-\frac{\big(\nabla\times d^*\big)\cdot B}{h}\Big] + \int \Big[D\cdot d^* +P\cdot \Big(v^*+\frac{\nabla|u^*|^2}{2}\Big)\Big]}\\
& \displaystyle{ \qquad - \int U\cdot\partial_t u^* + \int\Big[\phi\cdot \big(b-b^*\big) +\psi\cdot \big(d-d^*\big) + \varphi\cdot \big(v-v^*\big) \Big] }\\
& \displaystyle{ = - \int\frac{W^{\rm T}Q(w^*)W}{2h}  -\int \widetilde{U}\cdot\partial_t u^* + \int \Big[D\cdot d^* +P\cdot \Big(v^*+\frac{\nabla|u^*|^2}{2}\Big)\Big] }\\
& \displaystyle{    \qquad + \int\Big[\phi\cdot \big(b-b^*\big) +\psi\cdot \big(d-d^*\big) + \varphi\cdot \big(v-v^*\big) \Big] }
\end{array}
\end{equation}
Now since $W=\widetilde{W}+hw^*$, we can rewrite the quadratic like term as
\begin{equation*}
\begin{array}{r@{}l}
\displaystyle{ \int\frac{W^{\rm T}Q(w^*)W}{2h}  } & \displaystyle{ = \int\frac{\big(\widetilde{W}+hw^*\big)^{\rm T}Q(w^*)\big(\widetilde{W}+hw^*\big)}{2h} }\\
& \displaystyle{ = \int\left(\frac{\widetilde{W}^{\rm T}Q(w^*)\widetilde{W}}{2h} + \widetilde{W}\cdot Q(w^*)w^* + \frac{{w^*}^{\rm T}Q(w^*)w^*}{2}h\right) }
\end{array}
\end{equation*}
A direct computation gives that
\begin{equation*}
Q(w^*)w^*=\mathrm{L}(w^*)-(\partial_t u^*,0,0) + \left(\nabla\cdot\big(d^*\times b^*-{h^*}^{-1}v^*\big), -\nabla\big(b^*\cdot v^*\big),d^*,v^*+\frac{1}{2}\nabla\big|u^*\big|^2\right)
\end{equation*}
\begin{equation*}
\frac{{w^*}^{\rm T}Q(w^*)w^*}{2}=\frac{v^*}{2}\cdot\nabla\big|u^*\big|^2-
b^*\cdot\nabla\big(b^*\cdot v^*\big) + {h^*}^{-1}\nabla\cdot\big(d^*\times b^*-{h^*}^{-1}v^*\big)+{d^*}^2+{v^*}^2
\end{equation*}
Therefore, since $B$ is divergence free, we have
\begin{equation*}
\begin{array}{r@{}l}
\displaystyle{ \int\frac{W^{\rm T}Q(w^*)W}{2h}  } & \displaystyle{ = \int\left[\frac{\widetilde{W}^{\rm T}Q(w^*)\widetilde{W}}{2h} + \widetilde{W}\cdot \mathrm{L}(w^*)-\widetilde{U}\cdot\partial_t u^* + D\cdot d^* +P\cdot \Big(v^*+\frac{\nabla|u^*|^2}{2}\Big) \right] }\\
\end{array}
\end{equation*}
So, finally, we have
\begin{equation}
\frac{{\rm d}}{{\rm d}t}\int\frac{\big|\widetilde{U}\big|^2}{2h} + \int\frac{W^{\rm T}Q(w^*)W}{2h} +\int\widetilde{W}\cdot\mathrm{L}(w^*) = \int\Big[\phi\cdot \big(b-b^*\big) +\psi\cdot \big(d-d^*\big) + \varphi\cdot \big(v-v^*\big) \Big]
\end{equation}
Especially, when $(h,B,D,P)$ is a solution of the Darcy MHD \eqref{eq:dd-h}-\eqref{eq:dd-div}, i.e., $\psi=\phi=\varphi=0$, we obtain \eqref{eq:dd-rel-1}. Moreover, if $(h^*,h^*b^*,h^*d^*,h^*v^*)$ is also a solution of \eqref{eq:dd-h}-\eqref{eq:dd-div}, it is quite easy to verify that $\mathrm{L}_{h}(w^*),\mathrm{L}_{B}(w^*),\mathrm{L}_{D}(w^*),\mathrm{L}_{P}(w^*)$ respectively correspond to the equation for the non-conservative variables $(\tau,b,d,v)$, thus vanish.

\end{document}